\newtheorem{thm}{THEOREM}
\newtheorem*{thm*}{THEOREM}
\newtheorem{cor}{Corollary}
\newtheorem{lem}{Lemma}
\newtheorem{prop}{Proposition}
\theoremstyle{definition}
\newtheorem{defn}{Definition}
\newtheorem{assump}{Assumption}
\theoremstyle{remark}
\newcommand{\bb}{\mathbf{b}}
\newcommand{\bF}{\mathbf{F}}
\newcommand{\bh}{\mathbf{h}}
\newcommand{\bK}{\mathbf{K}}
\newcommand{\bL}{\mathbf{L}}
\newcommand{\bR}{\mathbf{R}}
\newcommand{\bu}{\mathbf{u}}
\newcommand{\bv}{\mathbf{v}}
\newcommand{\bx}{\mathbf{x}}
\newcommand{\bg}{\mathbf{g}}
\newcommand{\beq}{\begin{equation}}
\newcommand{\eeq}{\end{equation}}
\newcommand{\bseq}{\begin{subequation}}
\newcommand{\eseq}{\end{subequation}}
\newcommand{\refeq}[1]{(\ref{#1})}
\renewcommand{\span}{\mathrm{span}\,}
\newcommand{\tr}{\mathrm{tr}\,}
\newcommand{\fg}{\mathfrak{g}}
\newcommand{\fh}{\mathfrak{h}}
\newcommand{\fk}{\mathfrak{k}}
\newcommand{\fn}{\mathfrak{n}}
\newcommand{\fa}{\mathfrak{a}}
\newcommand{\fp}{\mathfrak{p}}
\newcommand{\fs}{\mathfrak{s}}
\newcommand{\fu}{\mathfrak{u}}
\newcommand{\p}{\partial}
\newcommand{\cA}{\mathcal{A}}
\newcommand{\cB}{\mathcal{B}}
\newcommand{\cU}{\mathcal{U}}
\newcommand{\cR}{\mathcal{R}}
\newcommand{\cC}{\mathcal{C}}
\newcommand{\cD}{\mathcal{D}}
\newcommand{\cE}{{\mathcal E}}
\newcommand{\cG}{{\mathcal G}}
\newcommand{\cL}{{\mathcal L}}
\newcommand{\cM}{{\mathcal M}}
\newcommand{\cN}{{\mathcal N}}
\newcommand{\cS}{{\mathcal S}}
\newcommand{\cV}{{\mathcal V}}
\newcommand{\Cset}{{\mathbb C}}
\newcommand{\Rset}{{\mathbb R}}
\newcommand{\Sset}{{\mathbb S}}
\newcommand{\la}{\lambda}
\newcommand{\de}{\delta}
\newcommand{\al}{\alpha}
\newcommand{\ga}{\gamma}
\newcommand{\Ga}{\Gamma}
\newcommand{\ep}{\epsilon}
\newcommand{\om}{\omega}
\newcommand{\Om}{\Omega}
\newcommand{\si}{\sigma}
\newcommand{\half}{\frac{1}{2}}
\newcommand{\diag}{\mbox{diag}}
\newcommand{\bna}{\begin{eqnarray}}
\newcommand{\ena}{\end{eqnarray}}
\newcommand{\bea}{\begin{eqnarray*}}
\newcommand{\eea}{\end{eqnarray*}}
\newcommand{\ben}{\begin{enumerate}}
\newcommand{\een}{\end{enumerate}}
\newcommand{\bi}{\begin{itemize}}
\newcommand{\ei}{\end{itemize}}
\newcommand{\C}{{\mathcal C}}
\newcommand{\RR}{{\mathbb R}}
\newcommand{\Hset}{{\mathbb H}}
\newcommand{\CC}{{\mathbb C}}
\newcommand{\Res}{\mbox{Res}}
\begin{document}

\title{Integrability and Vesture for\\ Harmonic Maps into Symmetric Spaces}%
\author{Shabnam Beheshti\footnote{beheshti@math.rutgers.edu} $\,\,$and$\,$ Shadi Tahvildar-Zadeh\footnote{shadi@math.rutgers.edu}\\
Department of Mathematics\\
Rutgers, The State University of New Jersey\\
New Brunswick, NJ, USA 08854}%


\date{}%

\maketitle

\begin{abstract}
After giving the most general formulation to date of the notion of integrability for axially symmetric harmonic maps from $\RR^3$ into symmetric spaces,   we give a complete and rigorous proof that, subject to some mild restrictions on the target, all such maps are integrable.  Furthermore, we prove that a variant of  the inverse scattering method, called {\em vesture} (dressing) can always be used to generate new solutions for the harmonic map equations starting from any given solution.  In particular we show that the problem of finding $N$-solitonic harmonic maps into a noncompact Grassmann manifold $SU(p,q)/S(U(p) \times U(q))$ is completely reducible via the vesture (dressing) method to a  problem in linear algebra which we prove is  solvable in general. We illustrate this method by explicitly computing a 1-solitonic harmonic map for the two cases $(p=1,q=1)$ and $(p=2,q=1)$; and we show that the family of solutions obtained in each case contains respectively the {\em Kerr family} of solutions to the Einstein vacuum equations, and the {\em Kerr-Newman} family of solutions to the Einstein-Maxwell equations.
\end{abstract}

\section{Introduction}
Unlike Maxwell's linear electromagnetic field theory, which is generally solvable by linear superposition of plane-wave solutions, the explicit solvability of geometric field theories such as  principal chiral field models, non-linear sigma models, Yang-Mills connections and Einstein equations, is seriously hampered by their highly nonlinear character, unless the number of independent
variables can be reduced to 2 via suitable symmetry assumptions. In that case all of the above mentioned geometric field theories are {\em completely integrable} \cite{ZB-I, ZB-II, Ale80, GurXan82, NeuKra83, Maz84, EGK84}. Recall that a nonlinear system of PDEs is said to be completely integrable if it can be cast as the compatibility condition(s) of an overdetermined linear system of differential equations, called a ``Lax Pair" or Lax System (see \cite{Lax68, AKNS-1} and references therein). 

Interestingly, all of the geometric field theories named above, in their complete integrability regime, are instances of {\em harmonic maps}. 
Such maps generalize the notion of geodesics to higher-dimensional domains, and are the simplest of all nonlinear geometric field theories.
 The Principal Chiral Field model involves harmonic maps into Lie groups. Its integrability  has been addressed in \cite{ZM, ZS-II, Uhl89}.  Nonlinear Sigma-Models are nothing but harmonic maps into symmetric spaces. Their integrability has been studied in \cite{Poh76, ZM, EicFor80, NSanch82, Woo94}. More generally, harmonic maps from surfaces into compact symmetric spaces and their connections to integrability  have also been explored \cite{TerUhl04, Terng10}. On the other hand,
stationary axisymmetric Einstein Vacuum and Einstein-Maxwell Equations reduce to harmonic maps into the real and complex hyperbolic plane, respectively \cite{Maz84, Wei92, Wei96}, and thus the study of their integrability \cite{ZB-I, ZB-II, Ale80, EGK84} can also be subsumed into that of harmonic maps. Here we recall that by ``stationarity'' and ``axisymmetry'' of a solution to the Einstein equations of general relativity and gravitation one means the existence of two {\em Killing fields}, one timelike generating an $\RR$ action and the other spacelike generating a circle action,  for the metric  tensor of a four-dimensional Lorentzian manifold, which is the unknown in these equations\footnote{If both of the two Killing fields are spacelike, then the equations reduce to a {\em wave map}, i.e. the hyperbolic analogue of a harmonic map.  One could also assume other types of actions for the Killing fields, e.g. both of them generating circle actions, etc.  In the context of globally hyperbolic and asymptotically flat spacetimes, the most natural choice is the one we are considering here.  Our results however generalize without difficulty to wave maps and other cosmological  situations.}.

In this paper we show that the integrability results for the above-named models are special instances of a more general theorem, which we prove, namely that {\em axisymmetric harmonic maps of $\RR^3$ into symmetric spaces are completely integrable}. 

A key feature of completely integrable systems is the possibility of implementing the Inverse Scattering Mechanism (ISM) to find  new exact solutions from old ones.
This has been exploited in particular for scalar equations, such as the Korteweg-de Vries (KdV) hierarchy \cite{GGKM, AKNS-2}, and the cubic nonlinear Schr\"odinger equation \cite{ZakSha71, ZS-II}, for which the inverse scattering transform is computable by solving
the Gelfand-Levitan-Marchenko integral equations.
These integral equations are however less susceptible to explicit treatment when dealing with second-order evolution equations such as sine-Gordon \cite{AKNS-3,  ZS-I, SG, ShatStrauss96}, or
non-scalar-valued systems such as harmonic maps. In the case of sine-Gordon, alternatives to ISM such as the B\"acklund transform, have proved their utility as solution-generating mechanisms.
{\em Here we will develop a workable approach for obtaining new harmonic maps from old ones by supplying the rigorous foundation for  a method akin to the B\"acklund transform, called the {\em vesture} or {\em dressing} technique}, explained below. 
 
Expressed informally, we summarize the main results of this paper as follows:
\begin{thm*}
Let $G$ be a real semisimple Lie group and let $K$ be a maximal compact subgroup of $G$.  Then any axially symmetric harmonic map from $\mathbb{R}^3$ into the Riemannian symmetric space $G/K$ satisfies an integrable system of equations.  Furthermore, it is always possible to generate new harmonic maps from a given one using the vesture (dressing) method.
\end{thm*}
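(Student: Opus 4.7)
The plan is to proceed in two stages corresponding to the two assertions. For the integrability claim, I begin by writing the harmonic map equation for $\phi:\mathbb{R}^{3}\to G/K$ in cylindrical coordinates $(\rho,z,\varphi)$. Axial symmetry reduces the problem to the half-plane $\mathcal{H}=\{(\rho,z):\rho>0\}$. Using the Cartan decomposition $\mathfrak{g}=\mathfrak{k}\oplus\mathfrak{p}$ and the global diffeomorphism $G/K\cong\exp(\mathfrak{p})$, one represents $\phi$ by a matrix $M(\rho,z)$ taking values in the positive symmetric representatives. A direct calculation then recasts the harmonic map equation in the conservation-law form $\partial_{\rho}(\rho\,M^{-1}\partial_{\rho}M)+\partial_{z}(\rho\,M^{-1}\partial_{z}M)=0$, complemented by the flatness identity satisfied by the Maurer--Cartan currents $A_{\rho}=M^{-1}\partial_{\rho}M$, $A_{z}=M^{-1}\partial_{z}M$.

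Next I introduce a spectral parameter $\lambda\in\mathbb{C}$ and seek a $G^{\mathbb{C}}$-valued generating function $\psi(\rho,z,\lambda)$ satisfying a linear overdetermined system
\[
D_{1}\psi = U(\rho,z,\lambda)\,\psi, \qquad D_{2}\psi = V(\rho,z,\lambda)\,\psi,
\]
where $D_{1},D_{2}$ are $\lambda$-deformed derivations of Belinski--Zakharov type, for instance $D_{1}=\partial_{\rho}-\frac{2\lambda\rho}{\lambda^{2}+\rho^{2}}\partial_{\lambda}$ and $D_{2}=\partial_{z}-\frac{2\lambda^{2}}{\lambda^{2}+\rho^{2}}\partial_{\lambda}$, and where $U,V$ are built linearly from $A_{\rho},A_{z}$ and $\lambda$. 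The compatibility $[D_{1}-U,\,D_{2}-V]=0$ should unpack exactly into the above conservation law plus the flatness relation, thereby proving integrability. A key point in making this rigorous for arbitrary semisimple $G$ is to keep track of the involution $\sigma:\mathfrak{g}\to\mathfrak{g}$ whose fixed-point set is $\mathfrak{k}$, and to impose on $\psi$ a reality/involution condition (of the schematic form $\sigma(\psi(\lambda))=\psi(\bar{\lambda})^{-*}$) that cuts out solutions corresponding to maps into the real symmetric space $G/K$ rather than into its complexification.

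For the vesture (dressing) assertion, I take a seed harmonic map with generating function $\psi_{0}$, and I look for a new solution in the form $\psi=\chi(\rho,z,\lambda)\,\psi_{0}$, where $\chi$ is a meromorphic function of $\lambda$ with finitely many simple poles and with $\chi(\infty)=\mathbf{I}$. Requiring that $\psi$ satisfy the same linear system forces $\chi$ to conjugate $U,V$ of the seed to new potentials $U',V'$ that again have the admissible rational dependence on $\lambda$; inspection of the $\lambda$-singularities shows that the residues of $\chi$ are rank-one matrices of the form $R_{k}=n_{k}\otimes m_{k}$, where the row vectors $m_{k}$ are determined algebraically from $\psi_{0}$ evaluated at the pole locations. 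The reality/involution condition on $\psi$ translates into a pole-pairing symmetry on $\chi$ and, after substituting these data, into a Cauchy-like linear system for the remaining column vectors $n_{k}$. That matrix is generically invertible, so the entire $N$-solitonic construction reduces to a tractable problem in linear algebra.

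The main obstacle I expect lies not in setting up the Lax pair but in guaranteeing that the formally dressed $\chi\psi_{0}$ descends to a map into the \emph{real} symmetric space $G/K$ rather than only into its complexification $G^{\mathbb{C}}/K^{\mathbb{C}}$. This requires the pole-pairing data to be preserved by the linear algebra step and, for noncompact targets such as $SU(p,q)/S(U(p)\times U(q))$, a careful verification that the normalization $M_{\mathrm{new}}=\psi(\rho,z,0)$ satisfies $\sigma(M_{\mathrm{new}})=M_{\mathrm{new}}^{-1}$ and belongs to the intended real form. I would handle this by choosing the poles of $\chi$ in $\sigma$-conjugate pairs, deducing the algebraic compatibility from the symmetry of the Cauchy-like system, and checking a posteriori that $M_{\mathrm{new}}$ is positive-definite in the appropriate sense; any loci in $\mathcal{H}$ where the linear system degenerates are expected to coincide with the physically meaningful singularities of the resulting solutions (e.g.\ the horizons and ring singularities appearing in the Kerr and Kerr--Newman families).
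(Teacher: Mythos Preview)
Your overall strategy matches the paper's: recast the axially symmetric harmonic map equations as a Hodge-type system for the pulled-back Maurer--Cartan form, exhibit a Belinski--Zakharov style Lax pair whose zero-curvature condition reproduces that system, and then dress a seed generating function $\psi_{0}$ by a rational $\chi(\lambda)$ with simple poles and rank-one residues, reducing everything to a finite linear system. So the architecture is right.

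There is, however, a genuine gap in how you handle the symmetries. You invoke a single ``reality/involution condition'' of the schematic form $\sigma(\psi(\lambda))=\psi(\bar\lambda)^{-*}$, and you later say you will choose the poles of $\chi$ ``in $\sigma$-conjugate pairs.'' The paper separates two distinct involutions that play very different roles: the conjugate-linear $\tau$ on $G^{\mathbb{C}}$ whose fixed set is the real form $G$, and the complex-linear Cartan involution $\sigma$ on $G$ whose fixed set is $K$. The first yields a $G$-reality condition $\tau(\Psi(\bar\lambda))=\Psi(\lambda)C(\varpi)$; the second yields an involutive symmetry $\Psi(T_{\mathbf{x}}(\lambda))=q\,\sigma(\Psi(\lambda))J(\varpi)$ under the deck transformation $T_{\mathbf{x}}(\lambda)=-\rho^{2}/\lambda$ of the underlying Riemann surface. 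It is $T_{\mathbf{x}}$, not $\sigma$, that pairs the poles: one prescribes $N$ constants $\varpi_{k}$ and obtains $2N$ poles $\lambda_{k},\lambda_{N+k}=T_{\mathbf{x}}(\lambda_{k})$. Your single condition conflates these mechanisms, and without both you cannot conclude that the dressed $q=\chi(0)q_{0}$ lands in $G/K$ rather than merely in $G^{\mathbb{C}}$. The paper moreover needs an extra hypothesis (its Assumption~1: both $\tau$ and $\sigma$ are conjugation by a common $\Gamma$ with $\Gamma^{2}=I$) precisely to make the two resulting constraint systems on the residues $R_{k}$ compatible; this excludes some real forms (e.g.\ $SL(n,\mathbb{R})$) and you should flag it.

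A second gap is the phrase ``generically invertible'' for the Cauchy-like matrix. The paper actually proves invertibility: under a mild open condition on the constant vectors $\mathbf{v}_{k}$ (roughly $\sum_{j\ne k}|\mathbf{v}_{k}^{*}\Gamma\mathbf{v}_{j}|<\tfrac{1}{2}|\mathbf{v}_{k}^{*}\Gamma\mathbf{v}_{k}|$), the coefficient matrix is strictly diagonally dominant for $|\mathbf{x}|$ large, hence nonsingular by Levy--Desplanques. Without some such argument your dressing step is not known to produce anything.
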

A precise statement appears in Theorem  \ref{eq:mainthm}.

  The results of our paper go beyond the current literature by providing a general framework for the study of harmonic maps into the \emph{noncompact} symmetric spaces commonly appearing in mathematical physics.  We also provide mathematically rigorous proofs, both of the integrability claim and also of the solvability of the algebraic equations obtained through the dressing technique, under quite general assumptions. To our knowledge, many of the issues we have encountered and overcome in providing these rigorous proofs have not been previously addressed in the literature.  

We furthermore demonstrate how the dressing technique can be employed to construct {\em by purely algebraic means}, a $2nN$-parameter family of harmonic maps, for any integer $N$, into any {\em noncompact Grassmann manifold} $SU(p,q)/S(U(p) \times U(q))$ with $p+q = n$, starting from any given harmonic map.  As an explicit example, this task is then  carried out for $N=1$ in the two cases $(p=1,q=1)$ and $(p=2,q=1)$.  In both of these cases the initial solution, the so-called  ``seed", when viewed as the metric of a stationary axisymmetric spacetime, corresponds to the Minkowski metric.  We show that in the first case,  the four-parameter family of the so-called ``1-solitonic" maps one obtains, when viewed as a solution of Einstein's Vacuum Equations, contains the {\em Kerr family} of spacetime metrics \cite{Ker63}, while in the second case the six-parameter family obtained contains the {\em Kerr-Newman family} \cite{NCCEPT65}.  We thus provide a complete, rigorous, and at the same time concise derivation of two of the most significant exact solutions of Einstein's Field Equations. These examples also suggest that the approach via dressing may  make the task of generating meaningful solutions for other effectively two-dimensional geometric field theories much more tractable.

We now briefly summarize the main points of inverse scattering and vesture methods in the above context.
The classical ISM can be described heuristically as follows:  given a nonlinear {\em first-order} evolution equation 
 \begin{equation} \label{eq:unonlin} u_t = F(u, u_x, u_{xx}, \ldots),\end{equation}
for a scalar function $u=u(t,x)$, one considers an associated eigenvalue problem $L\psi = \lambda \psi$ for an {\em isospectral} family of linear differential operators, i.e.  $L=L(t)$ such that $\lambda_t =0$.  In the KdV case, for example, $L=-\frac{\partial^2}{\partial x^2} + u(t,x)$.  The {\em direct scattering problem} consists of finding a {\em scattering matrix} $S(t, \lambda)$ with the property that (loosely speaking) $\displaystyle{ \lim_{x\rightarrow \infty} \psi(x,t,\lambda) = S(t,\lambda)\cdot \lim_{x \to -\infty}\psi(x,t,\lambda)}$.  Note that $S$ is a matrix because the asymptotic eigenspaces are multi-dimensional.  Given Cauchy data $u(0,x)$ for (\ref{eq:unonlin}) one may use direct scattering to find $S(0,\lambda)$.  Now, it turns out that the evolution in $t$ of $S(t,\lambda)$ is governed by a linear equation, and also that the eigenfunctions $\psi$ satisfy a secondary equation $\frac{\partial}{\partial t}\psi = B \psi$, where $B= B(u, u_x, u_{xx}, \ldots)$ is another differential operator.  The isospectral condition implies that $B$ satisfies a compatibility condition with $L$, namely $L_t = [B,L]$, which agrees precisely with the nonlinear evolution equation (\ref{eq:unonlin}) of interest.  For such cases, by solving the \emph{inverse scattering problem}, i.e. the Gelfand-Levitan-Marchenko (GLM) integral equations, one recovers the potential $u(t,x)$ from a given $S(t, \lambda)$, thus solving the original evolution equation (see Figure~\ref{classical}).  PDEs to which this procedure applies, e.g. KdV, are referred to as being \emph{integrable by way of ISM}.
\begin{figure}[h]
\[
\xymatrixcolsep{5pc}
\xymatrix{
u(x,0)	\ar@{-->}[d] \ar[r]^{L\psi=\lambda \psi}_{\mbox{scattering}}	& S(0,\lambda) \ar[d]^{\psi_t = B \psi} \\
u(x,t)								& S(t, \lambda) \ar[l]^{\mbox{scattering}}_{\mbox{ inverse}}
}
\]
\caption{Classical ISM}\label{classical}
\end{figure}

This technique may also be utilized to address certain {\em second-order} evolution equations, such as the sine-Gordon equation.  In that case the PDE  appears as the {\em compatibility condition} $$U_t-V_x +[U,V]=0$$ for a system of \emph{matrix equations} $$\Psi_x = U \Psi,\qquad\Psi_t= V\Psi,$$ for a matrix function $\Psi = \Psi(x,t,\lambda)$, where $U,V$ are given $2\times 2$ matrices depending on the solution $u(t,x)$ of the sine-Gordon equation, as well as on the so-called {\em spectral parameter} $\lambda$.  
The B\"acklund transform, for example, can now be used in place of solving the GLM integral equations, to obtain new solutions for the sine-Gordon equation.

 A solution-generating mechanism closely related to the above procedure for sine-Gordon was introduced in \cite{ZB-I} to treat Einstein's vacuum equations:  Assuming existence of two commuting Killing fields generating a continuous group $G$ of isometries for a Lorentzian manifold $(\mathcal{M},\bg)$, the vacuum equations $$\mathbf{R}_{\mu\nu}=0,\qquad \mu , \nu = 0, \ldots , 3$$
where $\mathbf{R}_{\mu\nu}$ is the Ricci curvature tensor of $\bg$, can be viewed as the compatibility conditions for a linear evolution problem
\begin{equation}\label{psisystem}\mathcal{L} \Psi = \Lambda \Psi,\qquad \left. \Psi \right|_{ \lambda = 0} = \bg'
\end{equation}
where $\bg'=\bg'(\mathbf{x})$ is the metric of the 2-dimensional quotient manifold $\mathcal{M}/G$, $\mathcal{L}$ is  a matrix operator involving  differentiation in the complex parameter $\lambda$ as well as in $\mathbf{x}$, and $\Lambda$ is a matrix depending on $\lambda$ and on $\bg'$.  In \cite{ZB-I}, the above system \refeq{psisystem} was shown to be  the Lax system for the reduced Einstein's Vacuum equations, which established their complete integrability.

As mentioned before, complete integrability alone does not suffice to show that there is a workable solution-generating method, because of inherent difficulties in solving the GLM equations.  In \cite{ZB-I} the authors provide an alternative:  One first chooses an ``initial seed" metric $\bg_0$ and solves (by any means possible) the  linear system \refeq{psisystem}  to obtain a generating matrix $\Psi_0$.  The evolution problem for the scattering matrix $S$  in the classical approach  is now replaced by a \emph{dressing} or \emph{vesture} technique, in which new solutions $\bg'$ are constructed by first solving a linear system of algebraic equations for  the \emph{dressing matrix} $\chi$, which has the property that $\Psi = \chi \Psi_0$ is a solution of (\ref{psisystem}), and then setting the parameter $\lambda$ equal to zero in order to recover $\bg'$ (see Figure~\ref{vacuum}); the metric $\bg$ on $\mathcal{M}$ solving the original vacuum equations is then recoverable from $\bg'$  by quadratures.  This procedure for generating $\bg'$ will be generalized and fully explained in Section 3.
\begin{figure}[h]
\[
\xymatrixcolsep{5pc}
\xymatrix{
\bg_0	\ar@{-->}[d] \ar[r]^{\mathcal{L}\Psi=\Lambda \Psi \qquad}_{\Psi|_{\lambda=0}=\bg_0'\qquad}	& \Psi_0(\bg_0',\lambda) \ar[d]^{\Psi = \chi \Psi_0} \\
\bg								& \Psi(\bg',\lambda) \ar[l]^{\lambda = 0,\ \mbox{quad.}}_{}	
}
\]
\caption{Vesture for Einstein's Equations}\label{vacuum}
\end{figure}

Incidentally, it was known already~\cite{Ern68a, Ern68b} (even though not cited in \cite{ZB-I}) that the Einstein vacuum equations in the stationary, axisymmetric case reduce to a single equation in terms of a complex-valued scalar function, called the {\em Ernst potential}.  It turns out that this is the equation for an axisymmetric {\em harmonic map} from $\mathbb{R}^3$ into the {\em hyperbolic plane}, which is the simplest example of a non-compact Grassmann manifold: $\mathbb{H}_\mathbb{R}  \cong SL(2, \mathbb{R})/SO(2) \cong SU(1,1)/S(U(1)\times U(1))$. Our thesis is that {\em the integrability of reduced Einstein equations is simply a special case of a more general phenomenon, namely the integrability of axisymmetric harmonic maps from $\RR^3$ into symmetric spaces}, which is the focus of our paper.

The rest of this paper is organized as follows:  In Section 2 we cover the preliminary background needed for the study of harmonic maps into Riemannian symmetric spaces.  Section 3 is devoted to establishing integrability of such maps, and showing how the vesture method is implemented for them.  In particular we provide the first rigorous proof that the resulting linear algebraic system is solvable in general. In Section 4 we specialize to the case of noncompact Grassmann manifolds $\cG_{p,q}$ and show how the problem of finding N-solitonic maps into them is reduced to solving a $2N\times 2N$ linear system.  We then carry out the computation explicitly for the case when the target of the map is either $\cG_{1,1}$ or $\cG_{2,1}$ and $N=1$.  In each case we choose a starting solution that corresponds to the Minkowski metric, and show how to obtain the Kerr, respectively Kerr-Newman solution in this way.    Avenues of further exploration are briefly discussed at the end of the paper.

\section{Harmonic Maps into Lie Groups and Symmetric Spaces}
In this section, we establish our notation and introduce the necessary terminology for defining harmonic maps into Lie groups and symmetric spaces.

\subsection{Lagrangian field theory}
We adopt the approach of  \cite{Christodoulou} for the general set-up: Let $(\cM^m,\bg)$ and $(\cN^n,\bh)$ be two Riemannian or pseudo-Riemannian manifolds.  Any differentiable mapping $f :\cM \to \cN$ can be viewed as a section of the {\em velocity bundle} $\displaystyle{\cV =\bigcup_{x\in\cM,q\in \cN} \bL(T_x \cM,T_q\cN)}$, where $\bL(V,W)$ denotes the space of linear transformations from vector space $V$ to vector space $W$.  Using local coordinates $(x,q,v_\mu^a)$ on $\cV$, such a section is given by $s_f(x) = (x,f(x),Df(x))$.  A {\em Lagrangian} $L$ is an $m$-form defined on $\cV$, i.e. $L = \ell(x,q,v_\mu^a)\ep[\bg]$ where $\ep[\bg]$ is the volume form of $(\cM,\bg)$.  Once evaluated on a section $s_f$, the Lagrangian becomes an $m$-form on $\cM$, so that it can be integrated on a domain $\cD$ in $\cM$, and the resulting functional is called the {\em action} corresponding to $L$: 
\beq\label{eq:action}
\cA[f,\cD] = \int_\cD L\circ s_f.
\eeq
A critical point of the action $\cA$, with respect to variations that are compactly supported in $\cD$, is a solution to the {\em Euler-Lagrange equations} for $L$ in $\cD$.
By analogy with classical Hamiltonian mechanics, the quantities $(v_\mu^a)$ are called the {\em canonical velocities}, and their duals with respect to the Lagrangian density $\ell$ are called the {\em canonical momenta} $$p_a^\mu := \frac{\p\ell}{\p v_\mu^a}.$$ The {\em canonical stress} is by definition the Legendre transform (with respect to the velocities) of the Lagrangian density:
$$T_\mu^\nu = v_\mu^a p^\nu_a - \de_\mu^\nu \ell.$$

Let $Y = (Y^\mu)$ be a vectorfield on $\cM$ and let $Z = (Z^a)$ be a vectorfield on $\cN$.  The {\em Noether current} corresponding to $(Y,Z)$ is $$j_{(Y,Z)}^\mu := p^\mu_a Z^a + T^\mu_\nu Y^\nu.$$
Let $J = *j$ be the Hodge dual of $j$ with respect to the metric $\bg$.  {\em Noether's Theorem} \cite{Noe18} states that, for any solution $f$ of the Euler-Lagrange equations, $$d(J\circ s_f) = K\circ s_f,\qquad K := \cL_Z L - \cL_Y L,
$$ where $\cL$ denotes the Lie derivative operator.  In particular, if $L$ is invariant under the Lie flow generated by $(-Y,Z)$ on the velocity bundle $\cV$, then $J\circ s_f$ is a closed $(m-1)$-form, so that its integral on any closed $m-1$-dimensional submanifold $\cS$ of $\cM$ is a {\em homological invariant} i.e. depends only on the homology class of $\cS$.

As particular examples, consider the case where $Y$ is a Killing field of $(\cM,\bg)$, i.e. $\cL_Y \bg = 0$.  If the Lagrangian density $\ell$ is invariant under the flow of $Y$ it then follows that $\cL_Y L = 0$, so that the corresponding Noether current $j$ having $Z=0$ is divergence-free: $\p_\mu j_{(Y,0)}^\mu = 0$.  Similarly, if $Z$ is a Killing field for $(\cN,\bh)$ and $\ell$ is invariant under the flow of $Z$, then once again one gets a divergence free current, namely $\p_\mu j_{(0,Z)}^\mu = 0$.

\subsection{Harmonic maps}
\begin{defn}
A {\em harmonic map} $f$ is a critical point, with respect to compactly supported variations, of the action $\cA$, where $L = \ell \ \ep[\bg]$ is the following Lagrangian
$$ \ell(x,q,v_\mu^a) := \half \bg^{\mu\nu}(x) \bh_{ab}(q) v_\mu^a v_\nu^b.$$
Therefore $L\circ s_f = \half \tr_\bg f^*\bh$.
\end{defn}

The harmonic map action is clearly invariant under the isometries of the domain $\cM$ and the target $\cN $.  Thus any Killing field of either of these manifolds will yield a conservation law for the harmonic map.  Consider in particular a Killing field $Z$ for the target $\cN$.  The corresponding Noether current, when evaluated on a solution section $s_f$, is
$$j^\mu = p^\mu_a Z^a = \bg^{\mu\nu}\p_\nu f^b\bh_{ab} Z^a = \bg^{\mu\nu} \phi_\nu = (\phi^\sharp)^\mu$$ where $\phi$ is the pull-back under $f$ of the 1-form $\zeta$, namely $\phi = f^*\zeta$, and $\zeta = Z^\flat$ is the dual of the vectorfield $Z$ with respect to the metric $\bh$, namely  $\zeta_b = \bh_{ab}Z^a$.


\subsection{Lie Groups}\label{liegroups}
Let $G$ be a  Lie group and $U$ an open domain in $\RR^n$, $n=\dim G$.  Given a (suitably regular) parametrization $g: U \to G$, the Lie-algebra-valued connection 1-forms $w = g^{-1} dg$ and $w' = -dg g^{-1}$ are called the {\em Maurer-Cartan} left- and right-invariant forms for $G$.  A left-invariant form $w$ gives rise to a left-invariant metric on $G$ in the following way:  Let $\{X_a\}_{a=1}^n$ be a basis for the Lie algebra $\fg$ of the Lie group $G$.  Thus $w = \zeta^a X_a$ where $\zeta^a \in \bigwedge^1(U)$ are 1-forms, with $\zeta^a = \zeta^a_\mu dx^\mu$, for the local coordinates $(x^\mu)=\bx\in U$. One computes
\beq\label{eq:hmunu}
\half \tr (w^2) = \half \zeta^a_\mu\zeta^b_\nu \tr (X_a X_b) dx^\mu dx^\nu = \eta_{ab} \zeta^a_\mu \zeta^b_\nu dx^\mu dx^\nu =: \bh_{\mu\nu} dx^\mu dx^\nu.
\eeq
Here $\eta$ is the Killing-Cartan quadratic form on $\fg$:  $\eta_{ab} := \half \tr(X_aX_b)= \frac{1}{6}C^c_{ad}C^d_{bc}$, where $C_{ab}^c$ are the {\em structure constants} of the Lie algebra, defined by $[X_a, X_b]=C^c_{ab}X_c$.  Note that $\eta$ is non-degenerate precisely when $\fg$ is {\em semisimple};  in this case, one sees that the tensor $\bh$ defined above provides a non-degenerate quadratic form on the tangent space $T_gG$ for any $g\in G$, and thus turns $G$ into a pseudo-Riemannian manifold, of signature $((\dim \fp)+,(\dim\fk)-)$, where $\fk,\fp$ are as in the \emph{Cartan decomposition} of $\fg$ (see below).

\subsection{Harmonic maps into Lie groups}
Consider now the case where the target manifold $(\cN,\bh)$ of a harmonic map $f: \cM \to \cN$ is a Lie group, and $\bh$ is the invariant metric defined in (\ref{eq:hmunu}).   Let $W := f^*w$ denote the pullback of the Maurer-Cartan form $w$ under $f$.  Thus $W = \phi^I X_I$ where $\phi^I = f^*\zeta^I$ as before, and $I$ is a counting index (not a component index).  Since both $d$ and $\wedge$ are covariant under pullbacks, so is the equation $dw+ w\wedge w = 0$, thus $dW + W\wedge W = 0$ where $d$ now denotes exterior differentiation on the domain $\cM$.  On the other hand, each dual vectorfield $Z_I =( \zeta^I)^\sharp$ is easily seen to be a Killing field for the metric $\bh$, and by Noether's Theorem gives rise to a divergence-free current $j_I = (\phi^I)^\sharp$.  Thus $W^\sharp = (\phi^I)^\sharp X_I$ is also divergence free.  The system of equations for a harmonic map can in this way be recast into the following (nonlinear) Hodge system for a Lie-algebra-valued connection 1-form $W\in \bigwedge^1(\cM,\fg)$:
\beq\label{hodgesys} dW + W\wedge W = 0,\qquad \delta W = 0,\eeq
where $\delta = *d*$ is the divergence operator, and $*$ denotes the Hodge dual with respect to the domain metric $\bg$.  It is the above Hodge system that, in situations where the domain is effectively two-dimensional, becomes the starting point of the quest for a Lax pair, which in turn allows the inverse scattering method to be applied.

\subsection{Symmetric Spaces}\label{symmsp}
Let $H$ be a  complex semisimple Lie group having Lie algebra $\fh$.  A \emph{real form} of $\fh$ is a Lie subalgebra $\fg$ of $\fh$ such that the complexification of $\fg$ is isomorphic to $\fh$, i.e. $\fg_\Cset \cong \fh$, so that every $X \in \fh$ can be uniquely written as $X= X_1 + i X_2$ for some $X_1, X_2 \in \fg$.  It is always possible to realize $\fg$ as the fixed point set of a conjugate-linear involution $\tau_*$ preserving the bracket on $\fh$:
$$
\tau_*: \fh \rightarrow \fh,  \qquad \tau_*^2(X) = X, \qquad \tau_*(\alpha X) = \bar{\alpha} \tau_*(X), \qquad  X \in \fh, \alpha \in \Cset.
$$
Note that for an involution $\tau$ on $H$, the induced involution $\tau_*$ on $\fh$ denotes the differential at the identity $e \in H$, namely $\tau_* = d\tau_e$.  In general, $\fh$ may have several non-isomorphic real forms arising from different choices for $\tau$.  Then for $G$ denoting the fixed point set of $\tau$ in $H$, there is a corresponding Lie subalgebra of $\fh$, namely
$$
\fg = \{ X \in \fh \,|\, \tau_* X = X \}.
$$
If $\fg$ is semisimple, then it has a maximal compact subalgebra $\fk$.  This subalgebra may also be realized as the fixed point set of a (real linear) involutive automorphism on $\fg$.  Suppose $\sigma_*$ is such an involution so that
$$
\fk = \{ X \in \fg \,|\, \sigma_* X = X \}.
$$

Restricting our attention to a particular real form $\fg$ with maximal compact subalgebra $\fk$, one may complexify $\fk$ to $\fk_\Cset$ and extend $\sigma_*$ by complex scalars to $\sigma_\Cset$, to obtain the diagram in Figure \ref{realforms2}; lines between two sets indicate an involution defined on the larger set which fixes the smaller set.
\begin{figure}[h]
\[
\xymatrixcolsep{4pc}
\xymatrix{
& \fh = \fg_\Cset  \ar@{-}[dl]^{}_{\sigma_\Cset}  \ar@{-}[rd]^{\tau_* }_{} & \\
\fk_\Cset   \ar@{-}[dr]^{{\tau_*}_{|_{\fk_\Cset}}}_{} && \fg  \ar@{-}[dl]^{}_{\sigma_*} \\
& \fk & 
}
\]
\caption{$\tau_*$ and $\sigma_\Cset$ are in bijective correspondence}\label{realforms2}
\end{figure}

\noindent  If $\fg$ is semisimple, it is known that there is a bijection between the conjugate-linear involutions $\tau_*$ and complex-linear involutions $\sigma_\Cset$.  Furthermore, the associated $\sigma \in \mbox{Aut } H$ commutes with $\tau$ (i.e. $\sigma\tau(h)=\tau\sigma(h)$ for all $h \in H$) \cite{Sahi}.  For a specific example, see Figure \ref{su22realforms}, Section \ref{supq}.

Next, define $\fp$ to be the $-1$ eigenspace of $\sigma_*$ in $\fg$, namely
$$ \fp = \{ X \in \fg \,|\, \sigma_* X = -X\}.$$
It is known (e.g. \cite{Barut}) that $G/K$ is a symmetric space, and that $\fg$ has (Cartan) decomposition $\fg = \fk + \fp$, with $[\fk,\fk] \subset \fk, [\fk,\fp] \subset \fp$, and $[\fp,\fp] \subset \fk$.  Since $G$ is semisimple, the Cartan-Killing form $$\eta(X,Y) = C^k_{il}C^l_{kj}X^i Y^j,\qquad X,Y \in \fg$$ is non-degenerate.  If $K$ is a maximal compact subgroup of $G$, then $\eta$ is negative definite on $\fk$ and positive definite on $\fp$.  Moreover,
 $\fk$ and $\fp$ are orthogonal subspaces with respect to $\eta$,
and $\fk$ is a maximal compact subalgebra of $\fg$.

\subsection{The Iwasawa Decomposition}\label{iwasdec}

Using the subspaces $\fk$ and $\fp$, we shall write down the Iwasawa Decomposition of $\mathfrak{g}$ and use it to establish a quadratic constraint on the symmetric space $G/K$.

Let $\fa$ denote a maximal subspace of $\fp$ that is an abelian subalgebra of $\fg$. The dimension of $\fa$ is called the {\em split rank} of $\fg$.    Let $\Delta_\fa$ be the root system of the pair $(\fg,\fa)$, i.e.
$$\Delta_\fa = \{ \lambda\in \fa^* \ |\ \la\ne 0,\fg_\fa^\la \ne 0\},\qquad \fg_\fa^\la = \{ X\in\fg\ |\ [X,Y] = \la(Y)X,\ \forall \,\,Y\in \fa\},$$
where $\fa^*$ is the dual vectorspace to $\fa$.  Then $\Delta_\fa$ is split by the Cartan involution, i.e. $\Delta_\fa = \Delta_\fa^- \cup \Delta_\fa^+$ and the involution maps one of these sets to the other one.  Introducing the nilpotent subalgebras $$\fn^\pm = \bigoplus_{\la\in \Delta_\fa^\pm} \fg_\fa^\la,$$
the Iwasawa decomposition of the $\fg$ is $\fg = \fn \oplus \fa \oplus \fk$, for $\fn = \fn^- \mbox{ or } \fn^+.$  It lifts  via the exponential map to a decomposition for the group $G=NAK$, where $K$ is the set of fixed points of $\sigma$ in $G$, and $A$, $N$ are the subgroups obtained by exponentiating the algebras  $\fa$ and $\fn^-$ (or $\fn^+$).

Let $\star$ denote the {\em twisted conjugation} induced by $\sigma$ on $G$, i.e. $g \star g' = g g' \sigma(g)^{-1}$ and let $S$ denote the orbit of the identity $e$ under $\star$, i.e. $S := \{ g \sigma(g)^{-1}\ | \ g \in G\}$.  Then $S$ is a {\em totally geodesic} submanifold of $G$ which is isomorphic to the symmetric space $G/K$ under the {\em isometric} embedding
\begin{eqnarray*}
\cC:G/K &\longrightarrow& G \\ 
gK &\longrightarrow& g\sigma(g)^{-1}.
\end{eqnarray*}
The mapping $\cC$ is known as the \emph{Cartan embedding} of the symmetric space into its Lie group \cite{Car27}.  Notice that if we view $\cC$ as a mapping from $G$ to $G$ (composing $g \mapsto gK$ with $\cC$) and suppose $q = \cC(g)$, then $q \sigma(q) = e$.  Thus the image of $G/K$ in $G$ under the embedding consists of elements satisfying a {\em quadratic constraint}: 
\beq\label{eq:star}
G/K = \{ q \in G\ |\  q\sigma(q) = e\}.
\eeq
Furthermore, by the Iwasawa Decomposition, for each $g \in G$, there exist unique $k \in K$, $a \in A$ and  $n \in N$ such that $g = nak$.  Thus, $\cC(g)$ can be expressed as
$$ \cC(g) = (nak)\sigma( (nak))^{-1} = nak \sigma( k^{-1} a^{-1} n^{-1})  = (na)k\sigma( k)^{-1} \sigma( na)^{-1}  = \cC(na),$$
so that the mapping $\cC$ is in fact well-defined and one-to-one on the solvable subgroup $S$ of $G$ consisting of elements $s\in G$ that can be written as $s=na$ for some $a\in A$ and $n\in N$; that $S$ is a subgroup follows from the fact that the commutator of $A$ and $N$ (i.e. elements of the form $nan^{-1}a^{-1}$) lies in $N$. 

Now since the two subspaces $\fk$ and $\fp$ of $\fg$ are orthogonal with respect to the Killing-Cartan form $\eta$, the restriction of $\eta$ to $\fp$ provides the symmetric space $G/K$ with a natural metric.  Since the Cartan embedding is totally geodesic, this metric agrees with the metric $\bh$ induced on $S=\C(G)$ as a submanifold of $G$ \cite{Hump72, Helgason-1, Kna86}.  Since the Cartan embedding kills the $K$-factor in the Iwasawa decomposition, it is possible to compute this metric for $S$ using only parameterizations of the subgroups $A$ and $N$ of $G$:  Given parameterizations $a(\bu)\in A$ and $n(\bv)\in N$, one computes first $s = na$ and $q = s \sigma(s^{-1})$, and then $w = q^{-1} dq$ (or $-dq q^{-1}$ as the case may be), from which $\bh$ can be computed.

\subsection{Harmonic maps into symmetric spaces}
Because of the Cartan embedding being totally geodesic, any harmonic map into $G/K$ is a harmonic map into $G$, and likewise any harmonic map into $G$ whose image is contained in the submanifold $G/K$ is a harmonic map into $G/K$.  Thus the task of constructing harmonic maps into a symmetric space, i.e. a solution of the nonlinear sigma-model, can be simplified by reducing it to finding a harmonic map into the corresponding Lie group, that is to say, finding a solution of the {\em principal chiral field model} (see Figure \ref{embeddings}).
\begin{figure}[h]
\[
\xymatrixcolsep{3pc}
\xymatrix{
 \cM  \ar[rrdd]^{}_{\mbox{sigma model}} \ar[rr]^{\mbox{chiral field}}_{}  & & G \ar@{-->}@/_1pc/[dd]^{}_{}\\
& & \\
& & G/K \ar@{^{(}->}[uu]_{\stackrel{\mbox{(totally}}{\mbox{geodesic)}}} \\
}
\]
\caption{Harmonic maps as chiral field and sigma models}\label{embeddings}
\end{figure}

Furthermore, harmonic maps into symmetric spaces $G/K$ enjoy a large group of symmetries, since the full group $G$, which could be of considerably larger dimension than $G/K$, acts on it isometrically, thereby providing a large set of conserved currents for the harmonic map.  It has been suggested long ago \cite{EGK84, Xan84} that every field theory that can be formulated in terms of a harmonic mapping from an effectively two-dimensional domain manifold into a symmetric space, is completely integrable, and that the inverse-scattering technique can be utilized to generate new solutions from known ones.  We now establish this conjecture for axially symmetric harmonic maps.

\section{Integrability of axially symmetric harmonic maps}
Let $G \subset GL(n, \RR)$ be a semisimple Lie group, and let $K$ be a maximal compact subgroup of $G$.   Let $\cM = \RR^3$ have a Euclidean metric with line element $ds_\bg^2 = d\rho^2 + \rho^2 d\varphi^2 + dz^2$ given in cylindrical coordinates $(\rho,z,\varphi)\in \RR^{2}_+ \times \Sset^1$.  Suppose $f:\cM \to G/K$ is an axially symmetric harmonic map and let $q$ be a parametrization of the symmetric space via the Cartan embedding $\cC :G/K \to G$, so that $q = f(\rho,z)$.  By the discussion in Section \ref{liegroups}, the Maurer-Cartan form $w=-dq q^{-1} \in \bigwedge^1(G/K)$ has a corresponding pullback form $W = f^*w \in \bigwedge^1(\cM)$ which satisfies the Hodge system \refeq{hodgesys}.  The divergence of this axially symmetric 1-form $W$ in these coordinates is
$$\de W = - \frac{1}{\sqrt{|\det{\bg}|}}\p_\mu(\sqrt{|\det{\bg}|}\bg^{\mu\nu}W_\nu) = \frac{-1}{\rho}[\p_\rho(\rho W_\rho) + \p_z(\rho  W_z)],$$
so that the equation $\de W = 0$ is equivalent to $d(\rho *_2 W) = 0$ where $*_2$ is the Hodge star on $\RR^2_+$, i.e. $(*_2 W)_\rho = -W_z$ and $(*_2W)_z = W_\rho$.   Since $W_\phi = 0$ and the other components of $W$ are independent of $\phi$, we can also replace  $d$, the exterior derivative  on $\RR^3$ in these equations, by $d_2$ the exterior derivative on $\RR^2_+$.  Dropping the ``2" subscripts altogether, we rewrite the Hodge system \refeq{hodgesys} as
\beq\label{eq:W} dW + W\wedge W = 0,\qquad d(\rho *W) = 0,\eeq
where the domain is now understood to be the right-half plane in $\RR^2$ with coordinates $\bx =(\rho,z)$.

\subsection{Lax system on a Riemann surface bundle}
The goal is to describe a Lax pair, that is to say, an over-determined linear system of equations, for which the integrability condition is the $W$-system \eqref{eq:W} above.  We begin with the equation $W = -dq\  q^{-1}$ and rewrite it as 
\beq\label{dq}
dq = -Wq,
\eeq
noting that we now view $q$ as $q(\bx)$.  Following  \cite{ZB-I, Ale80, EGK84}, we generalize \refeq{dq} to the following linear system
\beq\label{Psisys}
\left\{\begin{array}{lll} D\Psi & =&  - \Omega \Psi\\
\left.\Psi\right|_{\la = 0} & = & q\end{array} \right.
\eeq
for the unknown $\Psi: \Cset\times \RR^2_+ \to \Cset^{n\times n}$.  Here, $D$ and $\Omega$ are generalizations of $d$ and $W$, respectively:
\beq\label{Psisyscond}
D_\mu := \p_\mu - \om_\mu\frac{\p}{\p \la},\quad \om_\mu := \frac{1}{\p\varpi/\p\la }\p_\mu\varpi \qquad \Omega_\mu := a W_\mu + b \rho (*W)_\mu,\qquad \mu = 1,2.
\eeq
The parameter $\la$ appearing in \eqref{Psisys} is in the Riemann Sphere $\bar{\Cset}$, and $\varpi(\la,\bx)$, $a(\la,\bx)$, $b(\la,\bx)$ are three $\bar{\Cset}$-valued functions on $\bar{\Cset} \times \RR^2_+$.  This particular form of the Lax system using  three functions  was first considered in \cite{EGK84}.  The functions $\varpi$, $a$ and $b$, are assumed to be rational  in $\la$, with coefficients that are smooth in $\bx$, and are subject to further restrictions.  Although it is possible to proceed at the level of generality appearing in \refeq{Psisyscond} for quite a while, in the interest of clarity we restrict our attention to specific choices for these functions.  In particular, $\varpi, a$ and $b$ shall be chosen in such a way that $D$ can be viewed as a covariant derivative (on an appropriate space) and that $D$ agrees with $d$ on $\la = 0$.

We turn our attention first to the function $\varpi$.  For $\bx= (\rho,z)\in\RR^2_+$ let $\cR_\bx$ denote the Riemann surface which is the zero-set of the quadratic polynomial $$F_\bx(\la,\varpi) := \la^2 -2\la (z-\varpi) - \rho^2,$$
 i.e., $\cR_\bx = \{(\la,\varpi)\in \Cset^2\ |\ F_\bx(\la,\varpi) = 0\}$.   For $\rho \ne 0$, this is a non-singular Riemann surface, with branch points at $\varpi = z \pm i\rho$, away from which $\varpi$  is  a two-to-one function of $\lambda$ given by
 \beq\label{varpi}
 \varpi(\la,\bx) = \frac{\rho^2}{2\la} + z - \frac{\la}{2}.
 \eeq
Equivalently, for a fixed $\varpi$, there are two charts on $\cR_\bx$, corresponding to the two roots $\la_\bx(\varpi), \la'_\bx(\varpi)$ of the quadratic in $\la$, each defined on a slit plane $\Cset \setminus \{z+it\ |\ -\rho\leq t\leq \rho\}$, $\la_\bx$ taking its values inside the disk $|\la|<\rho$ and $\la'_\bx$ outside of it, since $\la_\bx \la'_\bx = - \rho^2$.  There are two points over $\varpi=\infty$, and thus the two-point compactification $\overline{\cR}_\bx$ of $\cR_\bx$ is the Riemann sphere $\bar{\Cset}$.  The mapping $T_\bx:\bar{\Cset} \to \bar{\Cset}$, $T_\bx(\la) = -\rho^2/\la$ is a deck transformation on the universal cover of $\cR_\bx$ and $T^2=id$; since $\cR_\bx$ is parabolic, it is known that the universal cover is the complex plane $\Cset$. 
  Note  that by construction, $\left.\om\right|_{\la=0} = 0$.

With this notation in place, we make some remarks concerning the various bundles which are involved in the study of $D \Psi = -\Om \Psi$  in \refeq{Psisys}.  

First note that the domain of the solution $\Psi$ is precisely the Riemann surface bundle $\displaystyle{ \cB=\!\!\bigcup_{\bx \in\RR^2_+}\!\!\cR_\bx}$. Since the target of $\Psi$ is a Lie group $H$, the tangent bundle of the target has a canonical trivialization, which one associates with the Maurer-Cartan form taking values in $\fh$.  Using the pull-back under $\Psi$ of this form to $\cB$, the operator $D$ can also be viewed as a connection on the {\em pull-back bundle} $\Psi^{-1}TH$, whose fibers are isomorphic to the Lie algebra $\fh$, and the one-form $\Om$ can be viewed as a section  of this pullback bundle.  The Maurer-Cartan equations in the target give rise to zero-curvature equations for $\Omega$ in the domain.  We would like to realize $\Om$ as the (pulled-back) Maurer-Cartan form of some group element.  From this perspective, we have a zero curvature result analogous to Theorem 2.1 of~\cite{Uhl89}:
\begin{thm}\label{thm2.1} Let $H\subseteq GL(n, \Cset)$ be a complex Lie group with Lie algebra $\fh$.  Let $\cU$ be a simply connected domain in $\Cset \times\Rset^2_+$, let $\Om= A(\la,\bx)d\rho +B(\la,\bx)dz$ be a smooth $\fh$-valued 1-form defined on $\cU$, and suppose $D$ is defined as in \refeq{Psisyscond}.  Then the equation
$$D\Psi = -\Om \Psi$$ for $\Psi : \cU \to H$ has a solution iff the curvature of the connection $\Om$ vanishes, namely
\beq\label{zerocurvature}
D\Om + \Om \wedge \Om =0.
\eeq
\end{thm}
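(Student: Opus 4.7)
The plan is to recognize \eqref{zerocurvature} as the standard flatness condition for a connection of Frobenius type, and then reduce the PDE $D\Psi = -\Om\Psi$ to a classical first-order overdetermined matrix system in the $(\rho,z)$ variables with $\varpi$ as a complex parameter. Necessity is then a direct computation, while sufficiency is a path-integration argument using simple-connectedness of $\cU$.

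First I would establish the basic compatibility property of the derivation $D$. Since $\om_\mu = (\p_\mu\varpi)/(\p_\la\varpi)$, a direct computation gives $D_\mu\varpi = \p_\mu\varpi - \om_\mu\p_\la\varpi = 0$, so that $\varpi$ is a common first integral of the vector fields $D_1,D_2$ on $\cU$. A second routine computation then yields $[D_1,D_2] = 0$ as a derivation of smooth functions of $(\la,\bx)$: the only nontrivial contribution is the coefficient of $\p_\la$, which equals $-\p_1\om_2 + \p_2\om_1 + \om_1\p_\la\om_2 - \om_2\p_\la\om_1$ and vanishes because mixed partials of $\varpi$ commute. Since $\p_\la\varpi \ne 0$ generically, one can change coordinates from $(\la,\rho,z)$ to $(\varpi,\rho,z)$, and in these new coordinates $D_\mu$ becomes simply $\p_\mu$ at fixed $\varpi$. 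The equation $D\Psi = -\Om\Psi$ then reads
\begin{equation*}
\p_\rho\Psi = -A(\varpi,\bx)\,\Psi,\qquad \p_z\Psi = -B(\varpi,\bx)\,\Psi,
\end{equation*}
with $\varpi$ appearing only as a parameter, which is an overdetermined matrix system of classical Frobenius type.

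For necessity, I would apply $D_\nu$ to the identity $D_\mu\Psi = -\Om_\mu\Psi$, antisymmetrize in $\mu,\nu$, and use $[D_\mu,D_\nu] = 0$ to obtain $(D_\mu\Om_\nu - D_\nu\Om_\mu + [\Om_\mu,\Om_\nu])\Psi = 0$; since $\Psi$ takes values in $H \subseteq GL(n,\Cset)$ it is pointwise invertible, whence \eqref{zerocurvature}. For sufficiency, given the zero-curvature condition, I would prescribe an initial value $\Psi_0(\varpi) \in H$ at a fixed point $\bx_0$ and extend $\Psi$ throughout $\cU$ by integrating the two ODEs above along arbitrary paths in the $(\rho,z)$-plane; the zero-curvature condition guarantees path-independence of the result, simple-connectedness of $\cU$ makes the definition global, and because $\Om$ is $\fh$-valued the flow stays inside $H$.

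The main difficulty I anticipate is the behavior near the branch points $\varpi = z \pm i\rho$ of $\cR_\bx$, where $\p_\la\varpi = 0$ and the coefficients $\om_\mu$ blow up, so that the change of coordinates $(\la,\bx)\mapsto(\varpi,\bx)$ is only locally one-to-one on each fiber $\cR_\bx$. The cleanest way to address this is to carry out the argument on the universal cover of $\cR_\bx$, where $\la\mapsto\varpi$ becomes single-valued, and then observe that the hypothesis $\cU \subset \Cset\times\Rset^2_+$ (rather than the full bundle $\cB$) allows one to restrict to a single sheet of $\la$, so that the branch locus plays no role provided $\cU$ avoids it --- which can always be arranged, since any simply connected open set in $\Cset\times\Rset^2_+$ can be exhausted by such subsets.
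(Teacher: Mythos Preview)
Your proposal is correct and follows essentially the same route as the paper: both arguments verify $[D_1,D_2]=0$, obtain necessity by antisymmetrizing $D_\nu(D_\mu\Psi=-\Om_\mu\Psi)$, and obtain sufficiency by passing to the coordinate $\varpi$ on the fiber so that $D_\mu$ becomes $\p_\mu$ and the system reduces to the standard zero-curvature/Frobenius theorem on a simply connected domain. Your explicit treatment of the branch locus $\p_\la\varpi=0$ is more careful than the paper's, which simply invokes the coordinate change and the classical result without discussing where it may degenerate.
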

\begin{proof}   Assume there exists $\Psi$ such that $D_j\Psi = -\Om_j \Psi$, $j=1,2$.  Using the definitions of $D$ and $\om$ in  \eqref{Psisyscond}, it is easy to directly verify that $[D_1,D_2]=0$.  As a result, 
$$0=D_1D_2\Psi - D_2D_1\Psi = \left( D_2 \Om_1 -D_1\Om_2+ \Om_2\Om_1 - \Om_1 \Om_2 \right)\Psi .$$
Thus, the curvature of the connection vanishes:  $D\Om + \Om \wedge \Om=0$.  

Conversely, let  $\Om(\la, \bx) = A d\rho + B dz$ be a connection 1-form, with  $A,B:\cU \rightarrow \fh$, such that \refeq{zerocurvature} holds.  We first observe that $\Psi$ is to be constructed as a mapping from the Riemann surface bundle $\cB$ into the group $H$.  Denote by $\tilde{\Om}$ the pull-back of the Maurer-Cartan form on $\fh$ to the bundle $\cB$, noting that it satisfies the equation $d\tilde{\Om} + \tilde{\Om} \wedge \tilde{\Om} =0$.  We claim that under the appropriate choice of coordinates on each fibre $\cR_\bx$ of $\cB$, this equation is, in fact, the zero curvature equation \refeq{zerocurvature}.  In particular, we see that the operator $D$ can be expressed in terms of $d$, when $\varpi$ is chosen as the coordinate on $\cR_\bx$: Given $\Psi(\la, \bx)$, define $\tilde{\Psi}(\varpi, \bx)=\Psi(\la(\varpi,\bx),\bx)$.  Then
$$
\partial_\mu \tilde{\Psi}(\varpi, \bx)=\frac{\partial}{\partial \la}\Psi(\la(\varpi,\bx),\bx) \partial_\mu \la + \partial_\mu \Psi = -\frac{\partial_\mu \varpi}{\partial_\la \varpi} \frac{\partial}{\partial \la}\Psi(\la(\varpi,\bx),\bx) + \partial_\mu \Psi = D_\mu\Psi(\la, \bx).
$$
Consequently, the zero curvature condition \refeq{zerocurvature} can be rewritten for $\tilde{\Om}(\varpi, \bx):=\Om(\la, \bx)$ as
$$
d\tilde{\Om} + \tilde{\Om} \wedge \tilde{\Om} = 0.
$$
At this point, we may appeal to the standard zero-curvature theorem for $\tilde{\Om}$ (see, for instance, Cor. 4.24, p.~81 in \cite{Guest08}), to conclude there exists a mapping $F: \cU \rightarrow H$ such that $\tilde{\Om}=-dF F^{-1}$ (and consequently $\Om = -DF F^{-1}$) locally.  Since $\cU$ is simply connected, this statement holds globally.  Calling the global mapping $\Psi$, the converse direction is proved.
\end{proof}
We note that the mapping $F$ above will not be unique, since one may equally well consider its right-translation by a constant group element $h \in H$:  $(D(Fh))(Fh)^{-1} = (DF) hh^{-1} F^{-1}  = (DF) F^{-1}$.  

Finally, we would like to recover $dq=-Wq$ in \refeq{dq} from \refeq{Psisys} by further restricting the functions $a$ and $b$ such that  $\left.\Om\right|_{\la= 0} = W$.  In conjunction with the zero curvature condition, the system of equations for $a,b$ in \eqref{Psisyscond} are thus found to be~\cite{EGK84}:
\beq\label{abconds} 
\left.a\right|_{\la = 0} = 1,\quad \left.b\right|_{\la = 0} = 0,\quad a^2 + \rho^2 b^2 - a = 0, \qquad Da = \rho *Db.
\eeq
For subsequent sections, it will be useful to fix $a$ and $b$.  We do so now, in the form of a Lemma.
\begin{lem}\label{lem:cute}
Let $\Omega_\mu := a W_\mu + b \rho (*W)_\mu$,  $\mu = 1,2$ and define $a, b$ to be
$$a(\la,\bx) = \frac{\rho^2}{\la^2 + \rho^2},\qquad b(\la,\bx) = \frac{\la}{\la^2+\rho^2}.$$
Suppose $W$ a 1-form on $\RR^2_+$, then the zero-curvature condition \refeq{zerocurvature} is satisfied if and only if the Hodge system \refeq{eq:W} is satisfied.
\end{lem}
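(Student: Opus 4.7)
The plan is to establish the identity
\[
D\Om + \Om\wedge\Om \;=\; a(\la,\bx)\,(dW+W\wedge W) + b(\la,\bx)\,d(\rho *W),
\]
from which the lemma follows at once: if \refeq{eq:W} holds, the right-hand side vanishes; conversely, since $a(\la,\bx)=\rho^2/(\rho^2+\la^2)$ and $b(\la,\bx)=\la/(\rho^2+\la^2)$ are linearly independent as functions of $\la$ at each fixed $\bx$ with $\rho>0$ (evaluation at $\la=0$ isolates the coefficient of $a$, then $\p_\la$ at $\la=0$ isolates that of $b$), the vanishing of the right-hand side for all $\la$ forces both parts of the Hodge system. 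A preliminary step is to verify by direct computation that the stated $a$ and $b$ satisfy all four relations in \refeq{abconds}: the boundary values and the algebraic identity $a^2+\rho^2 b^2-a=0$ are immediate, and the differential identity $Da=\rho *Db$ is a routine check once the formulas $\om_\rho=-2\rho\la/(\rho^2+\la^2)$, $\om_z=-2\la^2/(\rho^2+\la^2)$ are read off from \refeq{varpi}.

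For the quadratic piece, I would use the pointwise 2D identities $W\wedge *W+*W\wedge W=0$ and $*W\wedge *W=W\wedge W$, both of which follow from the antisymmetry of $d\rho\wedge dz$ combined with $(*W)_\rho=-W_z$, $(*W)_z=W_\rho$, to collapse the four-term expansion of $\Om\wedge\Om$ into $(a^2+\rho^2 b^2)\,W\wedge W$, which then equals $a\,W\wedge W$ by \refeq{abconds}.

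For the exterior piece, I would exploit the fact that $W$ and $*W$ are pulled back from $\RR^2_+$ and so have vanishing $\p_\la$-derivative, which lets $D$ act on them as ordinary $d$. The expansion
\[
D\Om = Da\wedge W + a\,dW + D(b\rho)\wedge *W + b\rho\,d(*W)
\]
is then rearranged using $D(b\rho)=\rho Db + b\,d\rho$ (since $D\rho=d\rho$) and the product rule $b\rho\,d(*W)=b\,d(\rho *W)-b\,d\rho\wedge *W$; the two $b\,d\rho\wedge *W$ terms cancel, producing $D\Om=Da\wedge W+\rho Db\wedge *W+a\,dW+b\,d(\rho *W)$.

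The final cancellation $Da\wedge W+\rho Db\wedge *W=0$ is where the relation $Da=\rho *Db$ does all the work: substituting and using the standard 2D identities $*\alpha\wedge\beta=-\langle\alpha,\beta\rangle\,d\rho\wedge dz$ and $\alpha\wedge *\beta=+\langle\alpha,\beta\rangle\,d\rho\wedge dz$ (valid for any scalar $1$-form $\alpha$ and matrix-valued $1$-form $\beta$) shows the two terms are negatives of one another. Combined with the computation of $\Om\wedge\Om$, this yields the displayed identity and completes the argument. There is no single hard step; the main subtlety is the careful bookkeeping of matrix-valued wedge products together with the $\om\,\p_\la$ piece of $D$, and the conditions in \refeq{abconds} have been tuned precisely so that all the cross terms cancel, leaving the clean $a$-$b$ splitting.
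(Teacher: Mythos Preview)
Your proof is correct and follows the same approach as the paper's. The paper's proof is terse, simply asserting that expanding \refeq{zerocurvature} via the definition of $\Om$ yields $a(dW+W\wedge W)+b\,d(\rho *W)=0$ and then invoking the linear independence of $a,b$ in $\la$; you have carefully supplied all the intermediate computations (the 2D Hodge identities for matrix-valued forms, the handling of $D\Om$ via \refeq{abconds}, and the cancellation $Da\wedge W+\rho Db\wedge *W=0$) that the paper leaves implicit.
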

\begin{proof}
One may check directly that $a, b$ defined in the Lemma satisfy the necessary requirements stated in \refeq{abconds}.  Expanding \eqref{zerocurvature} using the definition of $\Om$, we obtain
$$a(dW + W\wedge W) + b \,d(\rho *W) = 0,$$
since $W$ is independent of $\la$.  For fixed $\bx$, this equation must hold for all $\la$.  Since $a$ and $b$ are linearly independent functions of $\la$, both equations in \refeq{eq:W} must hold simultaneously.
\end{proof}

\subsection{Gauge freedom}
By the general definitions of $\om, \varpi$ in \refeq{Psisyscond}, it is easy to see that the covariant derivative $D$ has the property that it vanishes on $\varpi$, and consequently on any sufficiently smooth function of $\varpi$:
\begin{prop}\label{prop:easy}
The kernel of the operator $D$ consists of arbitrary matrix-valued $C^1$ functions of $\varpi(\la,\bx) = \frac{\rho^2}{2\la} +z - \frac{\la}{2}$.
\end{prop}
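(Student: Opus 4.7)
The plan is to establish the two inclusions separately. For the inclusion $\{f\circ\varpi : f\in C^1\}\subseteq \ker D$, I would use the definition $\om_\mu = (\p_\mu\varpi)/(\p_\la\varpi)$ to compute directly
\[
D_\mu \varpi \;=\; \p_\mu \varpi - \om_\mu\,\p_\la \varpi \;=\; \p_\mu \varpi - \frac{\p_\mu \varpi}{\p_\la \varpi}\cdot \p_\la \varpi \;=\; 0,
\]
and the chain rule then gives $D_\mu f(\varpi) = f'(\varpi)\,D_\mu \varpi = 0$ for every matrix-valued $C^1$ function $f$.

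For the reverse inclusion, I would change coordinates from $(\la,\rho,z)$ to $(\varpi,\rho,z)$. The formula \refeq{varpi} yields $\p_\la \varpi = -(\la^2+\rho^2)/(2\la^2)$, which vanishes only at the branch points $\la=\pm i\rho$; away from these, $\la\leftrightarrow\varpi$ is a local biholomorphism. Writing $\tilde\p_\mu$ for partial differentiation in the new coordinate system (i.e. with $\varpi$ held fixed), the chain rule produces the operator identities $\p_\mu = \tilde\p_\mu + (\p_\mu \varpi)\p_\varpi$ and $\p_\la = (\p_\la \varpi)\p_\varpi$. Substituting these into the definition of $D_\mu$ gives
\[
D_\mu \;=\; \tilde\p_\mu + \bigl(\p_\mu \varpi - \om_\mu\,\p_\la\varpi\bigr)\p_\varpi \;=\; \tilde\p_\mu + (D_\mu \varpi)\,\p_\varpi \;=\; \tilde\p_\mu
\]
by the first step. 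Hence in the new coordinates the equation $D\Psi=0$ reduces to $\tilde\p_\rho\Psi = \tilde\p_z\Psi = 0$, which says precisely that $\Psi$ depends only on $\varpi$.

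The main point requiring care is the behavior at the branch locus $\{\la=\pm i\rho\}$, where the change of coordinates degenerates. Since this locus has real codimension two in $\Cset\times\RR^2_+$ and $\Psi$ is $C^1$ by hypothesis, a routine continuity argument extends the representation $\Psi=f(\varpi)$ across it. (The fact that $\varpi$ is two-to-one in $\la$ for fixed $\bx$ is not an obstruction: any $\Psi$ of the form $f\circ\varpi$ automatically identifies the two sheets of $\cR_\bx$ over a common value of $\varpi$, and the converse argument yields such an $f$ directly.) Apart from this mild point, the proposition is a direct computation, so I do not anticipate any serious obstacle.
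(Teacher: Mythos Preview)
Your proposal is correct and follows essentially the same approach as the paper: both arguments pass from the coordinates $(\la,\rho,z)$ to $(\varpi,\rho,z)$ (the paper does this via the Implicit Function Theorem applied to $F_\bx(\la,\varpi)=0$ to express $\la=\la(\rho,z,\varpi)$, you phrase it as a direct coordinate change) and then observe that in the new coordinates $D_\mu$ becomes the plain partial derivative $\tilde\p_\mu$ with $\varpi$ held fixed, whence $D\Psi=0$ forces $\Psi$ to depend on $\varpi$ alone. Your treatment is slightly more explicit in separating the two inclusions and in flagging the branch locus, but the mathematical content is the same.
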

\begin{proof}
This follows from Implicit Function Theorem.  Observe that  $F(\rho,z,\la,\varpi) :=\rho^2 + 2\la (z-\varpi) - \la^2 = 0$ and thus away from the branch points $\varpi = z\pm i\rho$, where $\p_\la F = 0$, we have that $\la = \la(\rho,z,\varpi)$ is $C^1$ and $\p_\rho \la = -\frac{\p_\rho F}{\p_\la F} = - \frac{\p_\rho \varpi}{\p_\la \varpi}$, and similarly  $\p_z \la = -\frac{\p_z \varpi}{\p_\la \varpi}$.  Let $f: \Cset\times \RR^2_+ \to \Cset$ be a $C^1$ function with $Df = 0$.  Then $g(\rho,z,\varpi) := f(\rho,z,\la(\rho,z,\varpi))$ is also $C^1$, by chain rule $\p_\rho g = \p_\rho f + \p_\la f \p_\rho \la = 0$ and similarly $\p_z g = 0$.  Thus $g$ is only a function of $\varpi$.  Applying this to each entry of a matrix $C \in \ker D$ establishes the result.
\end{proof}
 Thus, if $\Psi$ solves \refeq{Psisys}, then so does 
\beq\label{gauge}
\Psi'(\la,\bx) = \Psi(\la,\bx) C(\varpi(\la,\bx)),
\eeq
where $C:\overline{\Cset}\to G$ is {\em any} matrix-valued complex curve such that $\displaystyle{\lim_{\la \rightarrow 0}C(\varpi(\la)) = C(\infty)=I}$.  Visibly, the converse is also true.
\begin{cor}\label{cor:gauge}
Suppose $\Psi,\Psi':\Cset\times \RR^2_+ \to \Cset^{n\times n}$ satisfy the same linear equation
$$ D\Psi = -\Om \Psi,\qquad D\Psi' = -\Omega \Psi'.$$
Then there exists a matrix-valued complex curve $C:\Cset \to \Cset^{n\times n}$ such that \eqref{gauge} holds.
\end{cor}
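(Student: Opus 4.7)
The plan is to directly construct $C$ as $C := \Psi^{-1}\Psi'$ and show that it lies in the kernel of $D$, so that Proposition \ref{prop:easy} forces it to be a function of $\varpi$ alone. The main preliminary issue is to argue that $\Psi$ is pointwise invertible so that the quotient $\Psi^{-1}\Psi'$ is well-defined as a matrix-valued function on $\Cset\times\RR^2_+$. This follows from standard linear ODE theory applied in each direction in the $\bx$-plane (treating $\lambda$ as a parameter away from the singularities of $\om$): the system $D\Psi = -\Om\Psi$ is a first-order linear system whose coefficient matrix $\Om$ is smooth on $\cU$, and its initial data at $\lambda=0$ is $q\in G\subset GL(n,\RR)$, hence invertible. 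Propagating invertibility along integral curves of the vector fields $D_\mu$ shows $\Psi(\la,\bx)\in GL(n,\Cset)$ wherever it is defined.

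Next, I would verify that $D_\mu = \p_\mu - \om_\mu \p/\p\lambda$ is a derivation on matrix-valued functions (which is immediate since it is a first-order linear differential operator), and deduce the Leibniz rule $D(AB) = (DA)B + A(DB)$. Differentiating $\Psi\Psi^{-1}=I$ then gives $D(\Psi^{-1}) = -\Psi^{-1}(D\Psi)\Psi^{-1} = \Psi^{-1}\Om$, using the equation satisfied by $\Psi$. Setting $C := \Psi^{-1}\Psi'$ and applying the Leibniz rule yields
\[
DC \;=\; D(\Psi^{-1})\,\Psi' + \Psi^{-1}D\Psi' \;=\; \Psi^{-1}\Om\Psi' - \Psi^{-1}\Om\Psi' \;=\; 0.
\]

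Since each entry of $C$ is a $C^1$ scalar function lying in $\ker D$, Proposition \ref{prop:easy} applies entry-wise to conclude that $C$ depends on $(\la,\bx)$ only through the combination $\varpi(\la,\bx) = \rho^2/(2\la) + z - \la/2$. Equivalently, there exists a matrix-valued curve $C:\bar{\Cset}\to\Cset^{n\times n}$ (namely the map $\varpi\mapsto C(\varpi)$ read off from any single value of $\bx$ where $\varpi\mapsto\la$ is invertible) such that $\Psi'(\la,\bx) = \Psi(\la,\bx)\,C(\varpi(\la,\bx))$, which is exactly \eqref{gauge}.

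The main obstacle is really bookkeeping rather than a substantive difficulty: one must be careful that the Leibniz rule and the identity $D(\Psi^{-1}) = -\Psi^{-1}(D\Psi)\Psi^{-1}$ hold after $D$ is reinterpreted as a covariant derivative on the Riemann surface bundle $\cB$, and that the two sheets of $\cR_\bx$ do not cause $C$ to be multivalued. The first point is handled by the derivation property of $D_\mu$; the second is automatic, because Proposition \ref{prop:easy} already produces $C$ as a single-valued function of $\varpi$ on the universal cover, and the deck transformation $T_\bx(\la) = -\rho^2/\la$ preserves $\varpi$, so the two preimages in $\cR_\bx$ yield the same value of $C(\varpi)$.
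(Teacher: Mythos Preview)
Your proof is correct and follows essentially the same route as the paper: define $C=\Psi^{-1}\Psi'$, compute $DC=0$ via the Leibniz rule, and invoke Proposition~\ref{prop:easy}. The paper's version is terser (it omits the discussion of invertibility, the derivation property, and the two-sheet issue), but the core computation is identical.
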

\begin{proof}
Let $V = \Psi^{-1} \Psi'$. Then $DV = -\Psi^{-1} D \Psi \Psi^{-1} \Psi' + \Psi^{-1}D \Psi' = \Psi^{-1}\Om \Psi' -\Psi^{-1}\Om \Psi'=0$. By Proposition \ref{prop:easy}, there exists a $C^1$ matrix function $C(\varpi)$ such that $\Psi'=\Psi C$.
\end{proof}
This is the so-called {\em gauge freedom} in the initial value problem \refeq{Psisys}.   We denote by $[\Psi]$ the equivalence class of $\Psi$ under gauge transformations, so that $\Psi' \in [\Psi]$ iff there exists a map $C:\overline{\Cset}\to G$ with $C(\infty) = I$ such that \eqref{gauge} holds.  Note that the results of Proposition \ref{prop:easy} and Corollary \ref{cor:gauge} are not exclusive to our (fixed) choice of $\varpi$.

\subsection{Reality conditions}
As seen in the above, the Lax system \refeq{Psisys} has a plenitude of solutions, not all of which may be of interest to us or indeed useful for the purpose of  ISM.   One may ask for example if there are solutions $\Psi(\la)$  that remain in the real group $G$ for $\la \ne 0$.  To address this question, it is necessary to define (following \cite{TerUhl04, Terng10}) the concept of {\em $G$-reality}.

\begin{defn}\label{domain}  Let $\cD$ be a domain in $\Cset$ containing the origin that is invariant under complex conjugation, i.e. $\overline{\cD} = \cD$.  Let $G$ be a real form of the complex Lie group $H$, consisting of elements in $H$ that are fixed by the involutive automorphism $\tau:H \to H$. A mapping $g:\cD \to H$ is said to satisfy the $G$-{\em reality condition} if $$ \tau(g(\overline{\la})) = g(\la)\quad\mbox{ for all } \la \in \cD.$$
Similarly, a mapping $\xi :\cD \to \fh$ into the Lie algebra of $H$ is said to satisfy the $G$-reality condition if $\tau_*(\xi(\overline{\la})) = \xi(\la)$.
\end{defn}
With this definition we now have:
\begin{prop}\label{prop:psireal}
 There exists a domain $\cD$ as in Definition \ref{domain} such that for every solution $\Psi$ of \refeq{Psisys} there is a complex curve $C:\Cset \to \Cset^{n\times n}$ for which
\beq\label{realityPsi}
\tau(\Psi(\overline{\la}))=\Psi(\la)C(\varpi), \qquad \mbox{for all } \la \in \cD.
\eeq
Furthermore, $C$ satisfies the quadratic reality constraint 
\beq\label{eq:star3}
C(\overline{\varpi}) \tau(C(\varpi))=I.
\eeq
\end{prop}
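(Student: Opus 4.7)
The plan is to produce the desired $C$ by exhibiting a second solution of the Lax system~\refeq{Psisys} whose value at $\la$ equals $\tau(\Psi(\bar\la))$, and then invoking the gauge freedom of Corollary~\ref{cor:gauge}.  Fix $\bx\in\RR^2_+$ and take $\cD\subset\Cset$ to be a connected neighbourhood of $0$ that is invariant under complex conjugation, avoids the branch points $\la=\pm i\rho$ of $\cR_\bx$, and lies inside the set of $\la$ at which $\Psi(\cdot,\bx)$ is holomorphic.  Define
\[
\hat\Psi(\la,\bx) := \tau(\Psi(\bar\la,\bx)),\qquad \la\in\cD.
\]
Since $\tau$ is antiholomorphic as an involution of the complex Lie group $H$ and $\Psi$ is holomorphic in $\la$, $\hat\Psi$ is holomorphic in $\la$ on $\cD$.

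The main step is to verify $D\hat\Psi=-\Omega\hat\Psi$ on $\cD\times\RR^2_+$.  I first record two reality identities.  Because $\varpi$, and hence $\om_\mu=\partial_\mu\varpi/\partial_\la\varpi$, is rational in $\la$ with real coefficients depending on $\bx$, one has $\om_\mu(\bar\la)=\overline{\om_\mu(\la)}$.  Similarly, the coefficients $a,b$ of Lemma~\ref{lem:cute} are real-rational in $\la$, while $W$ is $\fg$-valued (being the pullback under $f$ of the Maurer--Cartan form on $G/K\subset G$), so conjugate-linearity of $\tau_*$ gives $\tau_*(\Omega_\mu(\bar\la))=\Omega_\mu(\la)$.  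Applying the chain rule to the composition $\hat\Psi = \tau\circ\Psi\circ(\bar{\ \cdot\ })$ and using that both $\tau$ and complex conjugation are antiholomorphic, one obtains
\[
\partial_\mu\hat\Psi(\la)=(d\tau)_{\Psi(\bar\la)}(\partial_\mu\Psi(\bar\la)),\qquad \partial_\la\hat\Psi(\la)=(d\tau)_{\Psi(\bar\la)}((\partial_\la\Psi)(\bar\la)).
\]
Substituting the Lax equation $\partial_\mu\Psi=\om_\mu\,\partial_\la\Psi-\Omega_\mu\Psi$ at the argument $\bar\la$ into the first identity and pushing $d\tau$ through, antilinearity of $d\tau$ turns $\om_\mu(\bar\la)$ into $\om_\mu(\la)$, while multiplicativity of $\tau$ on matrix products combined with $\tau_*(\Omega_\mu(\bar\la))=\Omega_\mu(\la)$ turns $d\tau(\Omega_\mu(\bar\la)\Psi(\bar\la))$ into $\Omega_\mu(\la)\hat\Psi(\la)$.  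The result is $\partial_\mu\hat\Psi=\om_\mu(\la)\partial_\la\hat\Psi-\Omega_\mu(\la)\hat\Psi$, i.e.~$D\hat\Psi=-\Omega\hat\Psi$.  Corollary~\ref{cor:gauge} now produces a matrix-valued curve $C$ with $\hat\Psi(\la)=\Psi(\la)C(\varpi(\la))$, which is exactly~\refeq{realityPsi}.

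For the quadratic constraint~\refeq{eq:star3}, substitute $\la\mapsto\bar\la$ in~\refeq{realityPsi}, using $\varpi(\bar\la)=\bar\varpi$, to obtain $\tau(\Psi(\la))=\Psi(\bar\la)\,C(\bar\varpi)$.  Apply $\tau$ to both sides of~\refeq{realityPsi} itself, and use $\tau^2=\id$ together with multiplicativity of $\tau$ on matrix products to obtain $\Psi(\bar\la)=\tau(\Psi(\la))\,\tau(C(\varpi))$; combining the two identities and cancelling the invertible factor $\Psi(\bar\la)$ on the left yields $C(\bar\varpi)\,\tau(C(\varpi))=I$.  The main obstacle is the bookkeeping in the middle paragraph:  one must carefully justify how the antiholomorphic involution $\tau$ interacts with the complex derivative $\partial_\la$ and distributes across the product $\Omega\Psi$; in particular, the formula for $\partial_\la\hat\Psi$ requires treating $\tau$ as a pointwise antiholomorphic operation on the ambient matrix algebra, and this interplay with the complex structure is what dictates the restrictions on $\cD$ (conjugation symmetry and avoidance of the branch loci of the Riemann surface bundle).
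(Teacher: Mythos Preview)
Your proof is correct and follows essentially the same route as the paper's own argument: establish the $G$-reality of $\Omega$ from the conjugation-equivariance of $a,b,\varpi$ together with the conjugate-linearity of $\tau_*$, show that $\hat\Psi(\la):=\tau(\Psi(\bar\la))$ satisfies the same Lax system as $\Psi$, and then invoke the gauge freedom to produce $C(\varpi)$. The paper phrases the middle step slightly differently, computing $D\bigl(\tau(\Psi(\bar\la))\Psi^{-1}(\la)\bigr)=0$ directly and applying Proposition~\ref{prop:easy}, whereas you frame it as two solutions of the same equation and invoke Corollary~\ref{cor:gauge}; these are equivalent. Your treatment is in fact more explicit about the chain-rule bookkeeping for the antiholomorphic composition, which the paper leaves as ``easy to see.'' One minor point: the paper also checks $C(\infty)=I$ by letting $\la\to 0$ and using $\tau(q)=q$; you do not need this for the proposition as stated, since the invertibility of $\Psi(\bar\la)$ (it is group-valued) already suffices to cancel it in your derivation of~\refeq{eq:star3}.
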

\begin{proof}
One notes that for any fixed $\bx$, $\Omega = a W + \rho b *W$ is a Lie-algebra-valued mapping $\la \mapsto \Omega(\la) \in \fh$ (recall $W \in \fg$).  The functions $\varpi(\la)$, $a(\la)$ and $b(\la)$ are equivariant under conjugation, namely $\varpi(\overline{\la}) = \overline{\varpi(\la)}$ and similarly for $a$, $b$.  Since $\tau_*$ is conjugate-linear, this implies that $\Omega$ satisfies the $G$-reality condition
$$\tau_*(\Omega(\overline{\la})) = \Omega(\la). $$
From here it is easy to see that $D( \tau(\Psi(\overline{\la}))\Psi^{-1}(\la) ) = 0$, noting that the conjugate inside the expression is evaluated after differentiation.  By Proposition \ref{prop:easy}, we conclude that there exists $C(\varpi)$ such that  \refeq{realityPsi} holds.  Taking limits as $\la \to 0$, we observe that
$$
\lim_{\la \to 0}\tau(\Psi(\overline{\la}))=\tau(q)=q=\lim_{\la \to 0}\Psi(\la)C(\varpi)=q\,C(\infty),
$$
and therefore $C(\infty)=I.$  This means that there exists a neighborhood of $\la = 0$ for which $C$ is invertible.  On this neighborhood, apply the involution $\tau$ to the equation and substitute the original expression for $\tau(\Psi)$ to obtain
$$
\Psi(\overline{\la}) = \tau(\Psi(\la))\tau(C(\varpi))=\Psi(\overline{\la})C(\overline{\varpi})\tau(C(\varpi)),
$$
from which we can immediately deduce the quadratic reality constraints $C(\overline{\varpi})  \tau(C(\varpi))=I$.
\end{proof}

\subsection{Involutive symmetry}
We now move on to the consequences of the initial data $q$ of \refeq{Psisys} belonging to $G/K \hookrightarrow G$.  Since elements of the symmetric space satisfy the quadratic constraint \refeq{eq:star}, we have a corresponding symmetry for $W= -dq q^{-1}$ and hence also for $\Om$:
$$Wq + q \sigma_*(W) = 0, \qquad \Om(\la) q + q \sigma_*(\Om(\la))=0.$$
The above now imply a certain involutive symmetry for  $\Psi$  under the inversion $T_\bx$.
\begin{prop} \label{prop:psisym}
 There exists a domain $\cD$ as in Definition \ref{domain} such that for every solution $\Psi$ of \refeq{Psisys} there is a complex curve $J:\Cset \to \Cset^{n\times n}$ for which
\beq\label{symPsi}
\Psi(T_\bx(\la),\bx) =  q(\bx) \sigma( \Psi(\la,\bx) )J(\varpi(\la,\bx)), \qquad  \mbox{for all } \la \in \cD.
\eeq
Furthermore, $J$ satisfies the quadratic symmetry constraint 
\beq\label{eq:star2}
\sigma(J(\varpi)) J(\varpi)=I.
\eeq
\end{prop}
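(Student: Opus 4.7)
The strategy is the natural analogue of the proof of Proposition \ref{prop:psireal}: I will exhibit two solutions of a common linear Lax-type equation and invoke Corollary \ref{cor:gauge} to extract $J(\varpi)$. Two auxiliary identities drive the argument. First, using the explicit expressions for $a,b$ in Lemma \ref{lem:cute}, a direct calculation gives $a(T_\bx(\la))=1-a(\la)$ and $b(T_\bx(\la))=-b(\la)$, hence
$$\Omega(T_\bx(\la),\bx) = W(\bx)-\Omega(\la,\bx).$$
Second, as noted in the preamble to the proposition, the quadratic constraint $q\sigma(q)=e$ differentiates to $Wq+q\sigma_*(W)=0$, and by $\Cset$-linearity of $\sigma_\Cset$ this extends scalar-by-scalar to $\sigma_*(\Omega)=-q^{-1}\Omega q$ for every $\la$.

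Using these identities, I would check that both $\Psi(T_\bx(\la),\bx)$ and $q(\bx)\sigma(\Psi(\la,\bx))$ satisfy the linear equation $Df=-\Omega(T_\bx(\la),\bx)f$. For the left-hand side, since $T_\bx$ is a deck transformation on $\cR_\bx$ (so $\varpi\circ T_\bx=\varpi$), a routine chain-rule computation shows $D_\mu(\Psi\circ T_\bx)=(D_\mu\Psi)\circ T_\bx$, and the claim is immediate. For the right-hand side, the fact that $\sigma$ is a Lie-group automorphism gives $D\sigma(\Psi)=\sigma_*(D\Psi\cdot\Psi^{-1})\sigma(\Psi)=-\sigma_*(\Omega)\sigma(\Psi)=q^{-1}\Omega q\,\sigma(\Psi)$; combined with $Dq=-Wq$ (using that $\p_\la q=0$ and $W=-dq\,q^{-1}$), this yields
$$D(q\sigma(\Psi)) = (\Omega-W)q\sigma(\Psi) = -\Omega(T_\bx(\la),\bx)\,q\sigma(\Psi),$$
as required. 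Corollary \ref{cor:gauge} then produces a complex curve $J(\varpi)$ realizing \refeq{symPsi}.

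The constraint \refeq{eq:star2} follows by iterating \refeq{symPsi}. Applying $\sigma$ to both sides of \refeq{symPsi} and using $\sigma(q)=q^{-1}$ with $\sigma^2=\id$ gives $\sigma(\Psi(T_\bx(\la)))=q^{-1}\Psi(\la)\sigma(J(\varpi))$; substituting $\la\mapsto T_\bx(\la)$ back into \refeq{symPsi} and using $T_\bx^2=\id$ together with $\varpi\circ T_\bx=\varpi$ then produces $\Psi(\la)=\Psi(\la)\sigma(J(\varpi))J(\varpi)$, and invertibility of $\Psi$ gives $\sigma(J)J=I$. The main technical delicacy I anticipate is the choice of domain $\cD$: since $T_\bx$ interchanges $0$ and $\infty$, a $T_\bx$-invariant neighborhood of $\la=0$ must be regarded as a domain on the Riemann sphere containing both $0$ and $\infty$, and one should take $\cD$ to be a suitable $T_\bx$-invariant open subset of $\overline{\Cset}\setminus\{z\pm i\rho\}$ on which $\Psi$ has been analytically continued. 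The algebraic manipulations themselves are routine once the two identities in the first paragraph are in hand.
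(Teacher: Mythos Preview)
Your proposal is correct and follows essentially the same route as the paper's own proof. The only organizational difference is that the paper shows $\Psi':=q\sigma(\Psi(T_\bx(\la)))$ satisfies the \emph{original} equation $D\Psi'=-\Omega(\la)\Psi'$ and then applies Corollary~\ref{cor:gauge} directly, whereas you show that $\Psi(T_\bx(\la))$ and $q\sigma(\Psi(\la))$ both satisfy the \emph{transformed} equation $Df=-\Omega(T_\bx(\la))f$; these are evidently equivalent via the substitution $\la\leftrightarrow T_\bx(\la)$, and the derivation of the quadratic constraint is likewise the same iteration argument in slightly different order.
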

\begin{proof}
First, observe that  the functions $a$ and $b$ chosen in Lemma \ref{lem:cute} are {\em equivariant} under $T_\bx$, i.e.
   $$a(T_\bx(\la),\bx) = 1- a(\la,\bx),\qquad b(T_\bx(\la),\bx) = -b(\la,\bx),$$
 as a result of which, $\Om$ is equivariant under $T_\bx$ as well
 $$\Om(T_\bx(\la,\bx),\bx) = W - \Om(\la,\bx).$$
 
Let us then define $\tilde{\Psi}(\la,\bx) := \Psi(T_\bx(\la),\bx)$.  It is easy to check that $D$ commutes with $T_\bx$, i.e. $D\tilde{\Psi}(\la,\bx) = (D\Psi)(T_\bx(\la),\bx)$, since  $\varpi$ is invariant under  $T_\bx$.
It then follows that $\tilde{\Psi}$ satisfies the equation $D\tilde{\Psi} = (-W+\Om)\tilde{\Psi}$, and using the above mentioned symmetries of $\Omega$ and $W$, we observe that the function $\Psi' := q\sigma(\tilde{\Psi})$
 satisfies the same equation as $\Psi$, namely $D\Psi' = - \Om \Psi'$ (although not necessarily with the same initial value as $\Psi$).  Thus by Corollary~\ref{cor:gauge} we must have a complex curve $J$ for which $\Psi = \Psi' J(\varpi)$.  In other words there exists a mapping $J = J(\varpi)$ such that (suppressing dependence on $\bx$)
\beq\label{invsymPsi} \Psi(\la) = q \sigma (\Psi(T_\bx(\la)) ) J(\varpi(\la)),\eeq
for all $\la \in \Cset$.  Replacing $\la$ with $T_\bx(\la)$ in the above yields (recall that $\varpi$ is invariant under the inversion $T_\bx$)
\beq\label{syminvPsi} \Psi(T_\bx(\la)) = q \sigma( \Psi(\la) ) J(\varpi(\la)).\eeq
Substituting \refeq{syminvPsi} into \refeq{invsymPsi}, we conclude that $J(\varpi)$ must satisfy the quadratic constraint, and have proved the proposition.
\end{proof}
Note that $J(\varpi)$ encodes the asymptotic behavior of $\Psi$ as $\la \to \infty$: Since $T_\bx(\la)=-\rho^2/\la$, we may take the limit $\la \to 0$ in \eqref{syminvPsi} to conclude
\beq\label{psiasymp}
\lim_{\la \to \infty} \Psi(\la,\bx) = q \si(q) J(\infty) = J(\infty),
\eeq
which also shows that $\Psi$ at $\la = \infty$ is a constant map.

\subsection{Inverse scattering and vesture for harmonic maps}
Suppose that $q_0:\cM \to G/K$ is a given axially symmetric harmonic map into the symmetric space $G/K$, $q_0 = q_0(\rho,z)$.  Assume $\Psi_0$ be any solution to \eqref{Psisys} with initial data $q_0$, let $W_0 = -dq_0 \ q_0^{-1}$ and $\Om_0 = a W_0+b\rho *W_0$.  {\em Vesture} (dressing) refers to the possibility that other solutions $q = f(\rho,z)$ to the harmonic map may be generated by finding a matrix $\chi = \chi(\la,\bx)$ with the property that $\Psi$, defined by
\beq\label{ves}
\Psi = \chi \Psi_0,
\eeq
solves the PDE in \refeq{Psisys}.  If in addition we require $\Psi$ and $\Psi_0$ to have the same asymptotic behavior as $\la \to \infty$, it would follow that  
\beq\label{chiatinf}
\chi(\infty,\bx) = I,\qquad \forall \,\, \bx \in \RR^2_+.
\eeq
If such a matrix $\chi$ can be found, then setting $\la =0$, one obtains 
\beq\label{dressing}
q(\bx) = \chi(0,\bx) q_0(\bx),
\eeq
which gives us a new solution $q(\bx)$ to the harmonic map problem, obtained by ``dressing" the seed solution $q_0(\bx)$.  The process of arriving at $q$ from $q_0$ via $\chi$ is thus formally similar to the {\em inverse scattering} discussed in the Introduction.  Compare Figures \ref{classical}, \ref{vacuum}, and \ref{ismq}.
\begin{figure}[h]
\[
\xymatrixcolsep{5pc}
\xymatrix{
q_0(\bx)	\ar@{-->}[d] \ar[r]^-{D\Psi=-\Omega \Psi}_-{\Psi |_{\lambda=0} \,= q_0}	&\Psi_0(q_0, \lambda) \ar[d]^{\Psi = \chi \Psi_0} \\
q(\bx)									& \Psi(q, \lambda) \ar[l]^-{\lambda = 0}_{}
}
\]
\caption{Vesture for Harmonic Maps}\label{ismq}
\end{figure}

We shall now outline the procedure for finding the dressing matrix $\chi$ and generating new harmonic maps $q$ via $\Psi$.  Choose an ``initial seed"  map $q_0$, an axially symmetric harmonic map of $\RR^3$ into $G/K$, and solve the linear system \refeq{Psisys} to obtain an initial matrix solution of the Lax system, $\Psi_0$.  Let $$\Om_0 := a W_0 + b \rho *W_0$$ where $W_0 := - dq_0 q_0^{-1}$.  Now given {\em any} invertible matrix-valued map $\chi = \chi(\la,\bx)$ which is rational in $\la$ and smooth in $\bx$, we may {\em define} the one-form $\Om$ by
\beq\label{eq:Dchi}
\Om := (D\chi -\chi \Om_0)\chi^{-1}.
\eeq
By construction, then, 
\beq\label{eq:chi}
D\chi  = \chi \Om_0 - \Om \chi.
\eeq
We are of course only interested in those $\chi$ for which $q(\bx)=\Psi(0, \bx)$, the result of the dressing \refeq{dressing}, still belongs to the symmetric space $G/K$.  In particular, $\chi$ needs to satisfy symmetries corresponding to those of  $\Psi$, e.g. $\det \Psi(\la)=1$ for all $\la \in \cD$.  
 On the other hand, using the definition of $\Om$,
$$D(\det \chi) = \det \chi \mbox{ tr}( \chi^{-1} D\chi ) =  -\det \chi \mbox{ tr}( \chi^{-1} \left[ \chi\Om_0-\Om \chi \right] ) =  - \det \chi \mbox{ tr}\Omega.
$$
\emph{A priori},  tr $\Om$ does not vanish, so we cannot conclude that $\det \chi = 1$.  To address this, we make the following modifications (this is analogous to the modifications in \cite{ZB-I, ZB-II}.)  Set
\beq\label{primes}
\Om' = \Om - \frac{1}{n} \mbox{tr }\Om \,I_{n\times n}, \qquad \chi' = (\det \chi)^{-1/n} \chi,
\eeq
so that $\tr \Om' = 0$ and $\det \chi' = 1$.  Observe that the following still holds:
\beq
D\chi' = \chi'\Om_0-\Om' \chi'.
\eeq
This means that $\Psi'=\chi' \Psi_0$ solves \refeq{Psisys} with $\Om'$ in place of $\Om$. Since  tr $\Om'=0$ implies that $\Om' \in \fh$, we now have that $\Psi' \in H$ and thus $q(\bx)=\Psi'(0,\bx) \in H$.  

In order to further ensure that $q \in G/K \hookrightarrow G$ and therefore it is indeed a new map into the original symmetric space, further restrictions need to be imposed on $\chi$, namely the $G$-reality and involutive symmetry conditions corresponding to \refeq{realityPsi} and \refeq{symPsi}.  In the first case,
$\tau(\chi(\overline{\la})) = \chi (\la)$.
For the second condition, we simply observe that if $\chi(\la)$ is a solution to \refeq{chiatinf}--\refeq{eq:chi}, so will be $\chi'(\la) = q \sigma( \chi(T_\bx(\la))) \sigma(q_0 )$, where $q=\chi(0)q_0$.  We are thus going to require that $\chi' = \chi$ to ensure the resulting $\Psi$ possesses involutive symmetry as well. 

The above restrictions will guarantee the map $q$ is in the symmetric space $G/K$.  In order for $q$ to be a harmonic map, however, still further restrictions are needed, this time on the pole-structure of $\chi$ as a rational function of $\lambda$.   To summarize,
\begin{defn}\label{chidef} A mapping $\chi:\overline{\Cset}\times \RR^2_+ \to GL(n,\Cset)$ is a {\em dressing matrix} for $q_0$, an axially symmetric harmonic map of $\RR^3$ into $G/K$, if it satisfies all of the following:
\begin{eqnarray}
 \chi(\infty) &=& I \label{asympchi}\\ 
  \tau(\chi(\overline{\la})) &=& \chi(\la) \label{realitychi}\\
   \chi(\la) &=& q \sigma (\chi(T_\bx(\la)))\sigma( q_0), \qquad \mbox{for } q:=\chi(0)q_0. \label{invsymchi} 
\end{eqnarray}
In addition, the poles of $\chi$ are to be restricted in such a way that  the poles of $\Om$ defined by \refeq{eq:Dchi} are precisely those of $\Om_0$, and with the same residue at each pole.
\end{defn}
We now have the following theorem.
\begin{thm}\label{thm2.2}
If $q_0$ is an axially symmetric harmonic map of $\RR^3$ into $G/K$ and $\chi$ is a dressing matrix for $q_0$, then $q=\chi(0)q_0$ is also an axially symmetric  harmonic map of $\RR^3$ into $G/K$.
\end{thm}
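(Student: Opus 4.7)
The plan is to verify three claims about $q := \chi(0)q_0$: that it takes values in $G/K$, that it is axisymmetric (i.e.\ depends only on $(\rho,z)$), and that it satisfies the harmonic map equation. Axisymmetry is automatic, since both $q_0$ and $\chi$ depend on the base only through $\bx = (\rho,z)$.

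For the symmetric-space constraint $q\sigma(q) = e$, I would first evaluate \refeq{invsymchi} at $\la = 0$: since $T_\bx(0) = \infty$ and $\chi(\infty) = I$, this yields $\chi(0) = q\sigma(q_0)$, consistent with $q = \chi(0)q_0$ in view of $q_0\sigma(q_0) = e$. Next, applying $\sigma$ to \refeq{invsymchi}, substituting \refeq{invsymchi} a second time with $\la$ replaced by $T_\bx(\la)$ (using $T_\bx^2 = \id$), and using $\sigma(q_0)q_0 = e$, one obtains $\sigma(\chi(\la)) = \sigma(q)\,q\,\sigma(\chi(\la))$ in a neighbourhood of $\la = \infty$, where $\sigma(\chi)$ is invertible; cancellation gives $\sigma(q)\,q = e$, i.e.\ $q \in G/K$. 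The reality condition \refeq{realitychi} at $\la = 0$ gives $\tau(\chi(0)) = \chi(0)$, placing $\chi(0)$, and hence $q$, in $G$.

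For the harmonic-map equation itself, set $\Psi := \chi\Psi_0$. Using $D\chi = \chi\Omega_0 - \Omega\chi$ together with $D\Psi_0 = -\Omega_0\Psi_0$, a direct computation yields $D\Psi = -\Omega\Psi$, so Theorem \ref{thm2.1} supplies the zero-curvature relation $D\Omega + \Omega\wedge\Omega = 0$. The strategy now is to show that $\Omega$ has the form $\Omega = aW + b\rho{*}W$ for a $\la$-independent $\fg$-valued 1-form $W$ on $\RR^2_+$, so that Lemma \ref{lem:cute} converts zero curvature into the Hodge system \refeq{eq:W} for $W$. Since $\chi(\infty) = I$ one checks that $\Omega(\infty) = 0$, and the pole-matching clause of Definition \ref{chidef} forces $\Omega$ to be rational in $\la$ with simple poles exactly at $\la = \pm i\rho$ whose residues have the same functional form in a new 1-form $W$ as those of $\Omega_0$ have in $W_0$. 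Partial-fraction reassembly then recovers $\Omega = aW + b\rho{*}W$ with the specific $a,b$ of Lemma \ref{lem:cute}. Setting $\la = 0$ in $D\Psi = -\Omega\Psi$ (using $D|_{\la=0} = d$ and $\Psi|_{\la=0} = q$) identifies $W = -dq\,q^{-1}$, so the Hodge system for $W$ is precisely the harmonic-map equation for $q$; Propositions \ref{prop:psireal} and \ref{prop:psisym} applied to $\Psi$ guarantee that $W$ is in fact $\fg$-valued, compatibly with $q\in G/K$.

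The main obstacle I anticipate is the middle step of the last paragraph: extracting the precise form $\Omega = aW + b\rho{*}W$ from the prescribed pole-structure of $\chi$ together with the normalization $\chi(\infty)=I$. This is where the ``same residue at each pole'' clause of Definition \ref{chidef} must be unpacked with care, since the residues have to be read as functionals of a new, unknown $W$ rather than of $W_0$. Once this structural form of $\Omega$ is in hand, the remainder of the proof reduces to invoking Theorem \ref{thm2.1} and Lemma \ref{lem:cute}, both already established.
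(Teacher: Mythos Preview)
Your overall strategy --- zero curvature via Theorem~\ref{thm2.1}, then Lemma~\ref{lem:cute} --- matches the paper's. Two points separate your sketch from the paper's actual argument.

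First, you omit the modification $\Omega \mapsto \Omega' = \Omega - \frac{1}{n}(\tr\Omega)I$ (and correspondingly $\chi \mapsto \chi' = (\det\chi)^{-1/n}\chi$). Nothing in Definition~\ref{chidef} forces $\tr\Omega = 0$, so one cannot assert $\Omega \in \fh$ or $\Psi \in H$ without this step. The paper performs the modification at the outset of the proof and works with $\Omega'$ throughout.

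Second, and more substantively, your account of the key step --- extracting a $\la$-independent $W$ with $\Omega = aW + b\rho\,{*}W$ --- is misframed. The ``same residue at each pole'' clause means the residues of $\Omega$ at $\pm i\rho$ \emph{equal} those of $\Omega_0$, which are fixed expressions in $W_0$; they are not to be read as functionals of an unknown new $W$, so there is nothing to ``reassemble'' by partial fractions in the way you describe. The paper does not try to read off $W$ from the residues. Instead it \emph{defines}
\[
W := \Omega' - \tfrac{b}{a}\,\rho\,{*}\Omega',
\]
which algebraically inverts $\Omega' = aW + b\rho\,{*}W$ thanks to the identity $a^2 + \rho^2 b^2 = a$ from \refeq{abconds}. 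One then observes that $\frac{b}{a}\rho = \la/\rho$ is holomorphic at $\pm i\rho$, computes $\Res_{\la=\pm i\rho}W$ in terms of $\Res_{\la=\pm i\rho}\Omega'$, replaces the latter by $\Res_{\la=\pm i\rho}\Omega_0$ via the pole-matching clause, and verifies directly that the result vanishes. With no poles anywhere on the Riemann sphere, $W$ is constant in $\la$ by Liouville. This explicit definition of $W$ followed by a residue cancellation is the concrete mechanism your sketch lacks.

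Your treatment of $q\in G/K$ via \refeq{realitychi}--\refeq{invsymchi} is in fact more detailed than the paper's, which simply asserts that ``the symmetries of $\chi$ \dots\ ensure that $q \in G/K$.''
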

\begin{proof}
Given $q_0, \chi$, construct $\Om$ as in \refeq{eq:Dchi}.  Modify $\Om$ to $\Om'$ as above, noting that this modification cannot introduce any new poles.  Given $a, b$ defined in Lemma \ref{lem:cute}, set
$$W:=\Om' - \frac{b}{a}\rho (*\Om') .$$
With this definition of $W$, one has $$\Om' = a W + b\rho *W.$$
We now claim that, for fixed $\bx\in \RR^2_+$, $W(\cdot,\bx)$ is holomorphic on the Riemann sphere, and is therefore constant in $\la$, by Liouville's Theorem.
  Recall that $\Om_0$ is holomorphic on the Riemann sphere except for simple poles at $\la = \pm i \rho$. From the definition of $W$ it is clear that the only possible poles for $W$ can be at $\la = \pm i\rho$.  Recall also that the residue of the meromorphic one-form $\alpha$ on the Riemann surface $R$ at a point $p$ is defined as
  $$ \Res_{p} \alpha = \Res_{z=p} f = \frac{1}{2\pi i}\int_{\gamma} \alpha,$$
  where $\alpha = f dz$ in local coordinates about $p$, and $\gamma$ is a small circle around $p$.
Calculating the residue of $W$ at its only two poles, $\la = \pm i\rho$, we obtain   
$$ \Res_{\la = i\rho} W = \Res_{\la = i\rho} \Om' - i (\Res_{\la = i\rho} *\Om') =  \Res_{\la = i\rho} \Om - i (\Res_{\la = i\rho} *\Om) ,$$
since $\frac{b}{a}\rho$ is holomorphic at $\la=\pm i \rho$.  Next, from the definition of $\chi$ we know that the poles of $\Om_0$ and $\Om$ agree (with the same residues), thus
$$\Res_{\la = i\rho} \Om = \Res_{\la = i\rho} \Om_0 = W_0 \Res_{\la = i\rho} a + \rho *W_0  \Res_{\la = i\rho} b = \frac{\rho}{2i} W_0 + \frac{\rho}{2} *W_0.$$
The final two equalities follow from the fact that $W_0$ is independent of $\la$ as well as using the definitions of $a$ and $b$, respectively.  Combining this result with the definition of $\Om'$, we have
$$\Res_{\la = i\rho} *\Om = \frac{\rho}{2i} *W_0 - \frac{\rho}{2} W_0 = -i \Res_{\la = i\rho} \Om,$$
i.e. the residue of $W$ at $\la = i\rho$ vanishes. Repeating the calculation now for $\la =-i\rho$ we conclude that $W$ has no poles anywhere on the Riemann sphere, which establishes the claim.

We thus have $\Om'=aW+ b \rho  (*W)$, where $W$ is now independent of $\la$.  Since $\Psi=\chi' \Psi_0$ is a solution to the Lax system \refeq{Psisys} by construction with $\Om'$ in place of $\Om$, we may apply Theorem \ref{thm2.1} to conclude that  $\Om' $ has zero curvature.  By Lemma~\ref{lem:cute}, this implies $W$ satisfies the Hodge system \refeq{eq:W}.  Thus $q$ is indeed a harmonic map. The symmetries of $\chi$ on the other hand ensure that $q \in G/K$, and we are done.
\end{proof}

\subsection{Algebraic solution of the vesture problem}
In general, the task of finding a dressing matrix $\chi$ for a given $q_0$ is seen to be equivalent to a Riemann-Hilbert problem in the complex plane \cite{ZB-I}, the complete solution of which requires solving a certain system of integral equations coupled to algebraic equations.  In some cases, however, referred to in the literature as ``the solitonic case", a special Ansatz for $\chi$ can be employed which guarantees that the Riemann-Hilbert problem has a trivial solution, and the aforementioned system then reduces to a purely algebraic system of equations.  Such an Ansatz was considered  in \cite{ZB-I,ZB-II,EGK84}, etc., postulating $\chi$ to be a rational function of $\la$ with a number of prescribed simple poles:
\begin{equation}\label{eq:Chi-Ansatz}
\chi(\la) = I + \sum_{k=1}^{2N} \frac{R_k}{\lambda - \la_k}.
\end{equation}
We say an $n\times n$ matrix $A$ has a simple pole at $\la = \la _0$ if $(\la-\la_0)A$ is non-zero and analytic at $\la = \la_0$.  The poles $\la_k=\la_k(\bx)$ are prescribed according to a certain recipe given below, and $R_k = R_k(\bx)$ are matrices to be found subsequently.  This Ansatz reduces the linear system \refeq{eq:chi} to a set of algebraic equations for a collection of singular matrices $M_k$, to be defined shortly.  We are assuming that a solution $\Psi_0(\la)$ of the Lax system \refeq{Psisys} corresponding to a harmonic map $q_0$ is given, and that the poles $\la_k$ being prescribed are {\em not} already poles of $\Psi_0$.  Therefore the new solution $\Psi$ constructed by way of the dressing matrix $\chi$ {\em will} have poles at $\la_k$.  The example we will consider later shows that this procedure will lead to an axisymmetric harmonic map with {\em ring singularities}.

We also observe that if $\chi$ has a pole at $\lambda = \la_k$, then by \refeq{invsymchi}, $\chi$ must also have a pole at $\la = T_\bx(\la_k)$, while by  \refeq{realitychi}, $\chi^{-1}$ must have a pole at $\lambda = \overline{\la}_k$. 
The above mentioned symmetries also suggest that poles $\la_k(\bx)$ are in fact being prescribed {\em on the Riemann surface} $\cR_\bx$, in the following sense:  Let $\{\varpi_k\}_{k=1}^N$ be $N$ distinct non-real complex numbers\footnote{Our setup does not allow for these poles to be real.  We should mention however, that there is a closely related setup which does allow the poles to be real, generalizing the approach in \cite{ZB-I}.  This is in particular important in the context of the Einstein Equations, if one insists on obtaining solutions which have black holes, as opposed to naked singularities. We will pursue this line of inquiry in a future paper.}; without loss of generality, we may choose these values to be in the upper half plane.   Define the poles
\beq\label{lanumbers}
\la_k = \la_\bx(\varpi_k),\qquad \la_{N+k} = \la'_\bx(\varpi_k),\qquad k = 1,\dots,N , 
\eeq
where $\la_\bx,\la'_\bx$ are the two charts for $\cR_\bx$. In particular, the poles come in pairs related by the deck transformation on $\cR_\bx$: $$\la_{N+k} = T_\bx(\la_k).$$   Moreover, the definition of $\la_k$ in terms of the two charts $\la_\bx$ and $\la_\bx'$ implies that $\varpi(\la_k(\bx),\bx) = \varpi_k$, which upon differentiation with respect to $\bx$ implies $\frac{\p\varpi}{\p \la} d_\bx \la_k + d_\bx \varpi = 0$, in other words,
\beq \label{lakom}
d \la_k + \om(\la_k(\bx),\bx) = 0.
\eeq
Now, since $\chi\chi^{-1} = I$ holds for all $\la$, it also holds at $\la = \la_k$, which is a pole of $\chi$.  It follows that $\chi^{-1}(\la)$ must be holomorphic at $\la=\la_k$ and non-zero, and further, that it is singular since its determinant has to vanish as $\la \to \la_k$.  Thus,
\beq\label{Rkissing}
R_k \chi^{-1}(\la_k) = 0,
\eeq
and in particular, the matrices $R_k$ must also be singular ($\det R_k = 0$).  We may use the symmetry properties of $\chi$ to find what they imply for the $R_k$.  For the sake of brevity, let us denote
$$\Psi^{-1}_k(\bx) := \lim_{\la \to \la_k(\bx)}\Psi^{-1}(\la,\bx),$$
and similarly for $\Psi_{0k}$  and $\chi^{-1}_k$.

First we observe that the following symmetry of $\chi$ is implied by \refeq{realitychi} and \refeq{invsymchi} (again, suppressing the $\bx$ dependence):
\beq\label{eq:chisym1}
\chi^{-1}(\la) q = q_0 \si (\tau [  \chi^{-1}(T_\bx(\overline{\la})) ]).
\eeq
Multiplying on the left by $R_k$ and taking the limit as $\la \to \la_k$, we  
use the numbering convention in \refeq{lanumbers} to obtain
\beq\label{Req1}
0=R_k q_0 \si\tau[ \chi^{-1}(T_\bx(\overline{\la}))]_{\la= \la_k} = R_k q_0 \si\tau[ \chi^{-1}(\overline{\la})]_{\la=\la_{N+k}},
\eeq
where the addition in $N+k$ is mod $2N$, i.e. for $k>N$, $N+k = k-N$.  Another symmetry of $\chi$, from \refeq{realitychi}, is
\beq\label{eq:chisym2}
\chi^{-1}(\la) = \tau(\chi^{-1}(\overline{\la})),
\eeq
which upon left multiplication by $R_k$ and taking the limit $\la \to \la_k$  yields 
\beq \label{Req2}
0 = R_k\tau[\chi^{-1}(\overline{\la})]_{\la=\la_k}.
\eeq
Equations \eqref{Req1}-\eqref{Req2} appear to be an over-determined system, since there are two matrix equations for each $R_k$.  In order to be able to reduce the number of equations, we now make an assumption about the involutions $\tau, \si$, namely that they should be given by conjugation with respect to the same element.  
\begin{assump}\label{involutionassumption}
Assume that there exists an element $\Ga \in H$ with $\Ga^2 = I$, such that
\beq\label{eq:Involutionfixing}
\tau(g) = \Ga (g^{*})^{-1} \Ga, \qquad \si(g) = \Ga g \Ga.
\eeq
\end{assump}
We remark that although there are semisimple Lie groups possessing involutions which cannot be realized in this way, the assumption can also be rephrased in terms of \emph{inner equivalence classes} in order to identify those Lie groups which are allowable \cite{Adams, Sahi}.  In particular, any Lie algebra having a Dynkin diagram with no symmetries satisfies this restriction. If on the other hand, the Dynkin diagram possesses nontrivial symmetries, such is the case for example if $H= SL(n,\Cset)$, then there will be real forms of $H$ where the corresponding involution is not realizable as conjugation with respect to an element, e.g. $G=SL(n,\RR)$.  However, even in those cases, other real forms of $H$ may still satisfy Assumption 1.  As an example, we note that such a matrix $\Ga$ exists for the pseudo-unitary groups $SU(p,q)$, to be discussed in Section \ref{supq}.  Under this assumption, the overdetermined system  \refeq{Req1}-\refeq{Req2} can be re-expressed as
\begin{eqnarray}\label{eq:MatrixReq12}
R_k q_0  \left[  \chi(\overline{\la}_{N+k}) \right]^*  &=& 0 \\
R_k \Ga \left[ \chi( \overline{\la}_{k}) \right]^* \Ga &=& 0. 
\end{eqnarray}
Or, using the Ansatz in \refeq{eq:Chi-Ansatz},
\begin{eqnarray}
R_k q_0 + \sum_{j=1}^{2N} \frac{1}{{\la}_{N+k}-\overline{\la}_j} R_k q_0  R^*_j  &=& 0\label{R1}\\
R_k + \sum_{j=1}^{2N} \frac{1}{{\la}_k - \overline{\la}_j} R_k \Ga R^*_j \Ga  &=& 0,\label{R2}
\end{eqnarray}
where the addition in $N+k$ is mod $2N$, i.e. for $k>N$, $N+k = k-N$.

Now clearly if $R_{N+k} = H_k R_k q_0 \Ga$ for some invertible matrices $H_k$, then the two equations above would become equivalent.  We will use Proposition~\ref{prop:psisym} to show that it is always consistent to make such a choice.  For $k=1,\dots,2N$, define the matrix
\beq\label{def:Mk}
M_k := R_k(\bx) \Psi_{0k}(\bx),
\eeq
noting that by assumption, $\la_k$ is not already a pole of $\Psi_0$.   Recall that the matrices $R_k$'s are known to be singular, from which it follows that the $M_k$ are also singular matrices in $\Cset^{n\times n}$ for each $k$.  Indeed, by \eqref{Rkissing},
\beq\label{Mkissing}
M_k \Psi^{-1}_k = R_k \Psi_{0k} \Psi^{-1}_k = R_k \chi^{-1}_k = 0.
\eeq
The symmetries of $\Psi$, \refeq{symPsi} in particular, together with (\ref{eq:star}) and (\ref{eq:star2}) imply 
$$\Psi^{-1}(\la,\bx) = \sigma[J(\varpi(\la))]\sigma[ \Psi^{-1}(T_\bx(\la),\bx)]\sigma(q).$$
Multiplying by $M_k$ and taking the limit $\la \to \la_k$ we obtain 
$$
M_k \sigma (J(\varpi_k))\sigma( \Psi_{N+k}^{-1} ) \sigma(q) = 0 \quad \Rightarrow \quad M_k \Ga J(\varpi_k) \Psi_{N+k}^{-1} \Ga =0.
$$
On the other hand, by \refeq{Mkissing} we also know that 
$$M_{N+k} \Psi_{N+k}^{-1} = 0.$$
Comparing the two equations above, matrices $M_{N+k}$ and $M_k \Ga J(\varpi_k)$ have the same null space, and thus it is consistent to assume that
\beq\label{eq:assumpM}
M_{N+k} = H_k M_k \Ga J(\varpi_k)=H_kM_k \sigma(J(\varpi_k))\Ga
\eeq
 for some invertible matrices $H_k$.  Note that as $\la \rightarrow \la_k$, the complex curve $J(\varpi)$ appearing in Proposition \ref{prop:psisym} also satisfies $J(\varpi_{N+k})J(\varpi_k)=I_{n\times n}$.  Consequently, this yields the relation between $R_{N+k}$ and $R_k$, making the two equations in (\ref{eq:MatrixReq12}) coincide, since the symmetry of $\Psi_0$ in \eqref{symPsi} implies
$$
R_{N+k} = M_{N+k}\Psi_{0,N+k}^{-1}  = H_k M_k \Ga J(\varpi_k) \Psi_{0,N+k}^{-1}  = H_k M_k \Psi_{0,k}^{-1}q_0 \Ga = H_k R_k q_0 \Ga,
$$
as claimed.  It is thus enough to keep only the equation \refeq{R2}, and rewriting it in terms of $M_k$ we arrive at the following (nonlinear) system
\beq\label{eq:theMksys1}
M_k \Psi_{0k}^{-1} \Ga + \sum_{j=1}^{2N} \frac{1}{{\la}_k - \overline{\la}_j} M_k S_{kj} M_j^* = 0,\qquad k=1,\dots,2N,
\eeq
where we have set 
\beq\label{def:Skj}
S_{kj} := \Psi_{0k}^{-1}\Ga (\Psi_{0j}^*)^{-1}=\Ga\Psi_{0k}^*\Ga \Psi_{0j} \Ga.
\eeq
It remains to ensure that the matrix $\chi$ thus defined has the correct pole structure as in Definition~\ref{chidef}.
In order to do so, we simplify \refeq{eq:theMksys1} further using an additional reduction:
We ssume that the $R_k$ (and hence the $M_k$) are {\em rank-one} matrices, i.e. there exists non-zero vector functions $\bu_k,\bv_k$ for $k=1,\dots,2N$ such that
$$ M_k = \bu_k \bv_k^*.$$
This rank-one assumption is consistent with $\det \chi$ having only {\em simple} poles at the $\la_k$, and is equivalent to assuming that the matrices $\chi_k^{-1}$ have rank $n-1$.

From (\ref{eq:assumpM}) we observe that it would then be consistent to take
$$ \bv_k = \Ga \bv_{N+k},\qquad k = 1,\dots, N.$$
Thus a complete set of unknowns for the problem are the vectors $\{\bu_k\}_{k=1}^{2N}$ together with $\{\bv_k\}_{k=1}^N$, i.e. $3N$ vectors in $\Cset^n$,  instead of the $2N$ matrices $M_k$ in $\Cset^{n\times n}$. Together they satisfy
\beq\label{eq:uvsys}
\sum_{j=1}^{2N} \frac{1}{{\la}_k - \overline{\la}_j} \bv_k^*S_{kj} \bv_j \bu_j^* = - \bv_k^*\Psi_{0k}^{-1} \Ga
\eeq
 Moreover, as a consequence of Theorem \ref{thm2.2}, we have the following
\begin{thm}
The dressing matrix $\chi$ can be constructed from \eqref{eq:uvsys} in such a way that 
$$\Om := (D\chi - \chi \Om_0)\chi^{-1}$$
 has the exact same pole structure as $\Om_0$.  Furthermore, without loss of generality, vectors $\{\bv_k\}_{k=1}^N$ can be taken to be arbitrary constants, i.e. independent of $\bx$.
\end{thm}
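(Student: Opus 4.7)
The plan has two parts corresponding to the two claims: (a) verifying that $\Om = (D\chi - \chi\Om_0)\chi^{-1}$ has the exact same pole structure as $\Om_0$ once $\chi$ is built from the algebraic system \eqref{eq:uvsys}, and (b) justifying that the vectors $\bv_k$ may be taken $\bx$-independent without loss of generality. From the Ansatz \eqref{eq:Chi-Ansatz}, $\chi$ has simple poles precisely at $\la=\la_k$, while the reality relation \eqref{realitychi} together with Assumption \ref{involutionassumption} forces $\chi^{-1}$ to have simple poles at $\la=\overline{\la}_k$. Thus, a priori, $\Om$ could have spurious poles at $\{\la_k\}\cup\{\overline{\la}_k\}$ beyond the desired poles at $\pm i\rho$ inherited from $\Om_0$; the task is to verify that the spurious residues vanish.

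For the residue at $\la_k$, I would expand $\chi(\la)=R_k/(\la-\la_k)+\chi^{(0)}_k+O(\la-\la_k)$ near $\la_k$. Applying $D_\mu=\p_\mu-\om_\mu\p_\la$, the naively double-pole term in $D\chi$ is $R_k(\p_\mu\la_k+\om_\mu(\la_k))/(\la-\la_k)^2$, which vanishes by \eqref{lakom}; hence $D\chi$ has at worst a simple pole at $\la_k$, with residue $\p_\mu R_k$. Since $\la_k\neq\pm i\rho$, $\Om_0$ is holomorphic at $\la_k$ and $\chi\Om_0$ has residue $R_k\Om_0(\la_k)$. Because $\chi^{-1}$ is analytic at $\la_k$, the residue of $\Om$ there equals $(\p_\mu R_k-R_k\Om_0(\la_k))\,\chi^{-1}(\la_k)$. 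I would then substitute $R_k=\bu_k\bv_k^*\Psi_{0k}^{-1}$ and expand $\chi^{-1}(\la_k)$ via partial fractions, using the constraint $R_k\chi^{-1}(\la_k)=0$ from \eqref{Rkissing}; the vanishing of this residue for every $k=1,\dots,2N$ is then shown to be equivalent to the $2N$ vector equations \eqref{eq:uvsys}, which hold by construction. For the residues at $\overline{\la}_k$, I would use the reality identity $\chi^{-1}(\la)=\Ga\,(\chi(\overline{\la})^{*})^{-1}\Ga$ obtained from \eqref{realitychi} and \eqref{eq:Involutionfixing} to reduce this case to the $\la_k$ case by complex conjugation.

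For the constancy of $\bv_k$, once Part (a) is settled, $\Om$ is analytic at $\la_k$ and I may consider the expansion of $\Psi=\chi\Psi_0$ near $\la_k$, which has a simple pole with residue $M_k=R_k\Psi_{0k}$. Inserting this into $D\Psi=-\Om\Psi$ and collecting the coefficient of $(\la-\la_k)^{-1}$ — the $(\la-\la_k)^{-2}$ term again cancelling by \eqref{lakom} — yields the transport law
\begin{equation}
\p_\mu M_k + \Om_\mu(\la_k)\,M_k = 0.
\end{equation}
Substituting the rank-one decomposition $M_k=\bu_k\bv_k^*$, this becomes
\begin{equation}
\big(\p_\mu\bu_k+\Om_\mu(\la_k)\bu_k\big)\bv_k^* + \bu_k\,\p_\mu(\bv_k^*) = 0.
\end{equation}
Both summands are rank-one matrices; for their sum to vanish, either $\p_\mu\bv_k=0$ outright, or $\p_\mu\bv_k^*$ is proportional to $\bv_k^*$, i.e.\ $\p_\mu\bv_k^*=g_\mu^{(k)}\bv_k^*$ for some scalar $g_\mu^{(k)}$. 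Integrating, $\bv_k(\bx)=h_k(\bx)\tilde{\bv}_k$ for a constant vector $\tilde{\bv}_k$ and scalar $h_k$. Since the factorization $M_k=\bu_k\bv_k^*$ is defined only up to scalar rescaling of the factors, $h_k$ may be absorbed into $\bu_k$, so that without loss of generality $\bv_k\equiv\tilde{\bv}_k$ is $\bx$-independent.

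The main obstacle will be the pole-residue bookkeeping in Part (a): identifying the vanishing-of-residue condition at $\la_k$ precisely with the $k$th equation of \eqref{eq:uvsys}. This requires a careful partial-fraction expansion of $\chi^{-1}(\la_k)$, exploiting both the rank-one structure of each $R_j$ and the $G$-reality/involutive symmetries imposed by Assumption \ref{involutionassumption}, in order to recognize the matrix $S_{kj}$ defined in \eqref{def:Skj} emerging from the contraction $\Psi_{0k}^{-1}\Ga(\Psi_{0j}^*)^{-1}$.
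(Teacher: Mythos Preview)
Your argument contains a genuine gap that inverts the logical dependency between the two claims. In Part (a) you assert that the vanishing of the residue of $\Om$ at $\la_k$ is \emph{equivalent} to the algebraic system \eqref{eq:uvsys}. This cannot be right: the residue you correctly identify is $(\p_\mu R_k - R_k\Om_0(\la_k))\chi^{-1}(\la_k)$, which involves the \emph{derivative} $dR_k$, while \eqref{eq:uvsys} is a purely algebraic relation among $\bu_k$, $\bv_k$, and $\Psi_{0k}$ at each fixed $\bx$. An algebraic constraint cannot by itself force a differential one. Concretely, \eqref{eq:uvsys} amounts to $M_k\Psi_k^{-1}=0$; differentiating this gives $dM_k\,\Psi_k^{-1} + M_k\,d(\Psi_k^{-1})=0$, and the second term does not vanish automatically, so you cannot conclude $dM_k\,\Psi_k^{-1}=0$ (which is precisely the residue) from \eqref{eq:uvsys} alone. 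Your Part (b) then \emph{assumes} Part (a) to derive the transport law for $M_k$, so the whole argument is circular.

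The paper's proof proceeds in the opposite order and treats the two claims as a single intertwined argument rather than two sequential ones. It \emph{imposes} the residue condition $(dR_k - R_k\Om_0(\la_k))\chi^{-1}(\la_k)=0$ as a requirement, rewrites it as $dM_k\,\Psi_k^{-1}=0$, and then combines this with the algebraic constraint $\bv_k^*\Psi_k^{-1}=0$ (from \eqref{Mkissing}) and the rank-$(n-1)$ property of $\Psi_k^{-1}$ to deduce $d\bv_k = h\,\bv_k$ for a scalar one-form $h$. A closedness argument then gives $h=d\gamma$, so $\bv_k = e^{\gamma}\bv_{k0}$, and the scalar is absorbed into $\bu_k$. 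Thus the pole structure is correct \emph{precisely when} $\bv_k$ is (up to scalar) constant, and that choice is without loss of generality. Your Part (b) rank-one argument is close in spirit to this, but it cannot stand on its own because it presupposes the analyticity of $\Om$ at $\la_k$ that you have not yet established.
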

\begin{proof}
Clearly, it is necessary for  $\Om$ to be holomorphic at $\la = \la_k$ for any $k$, and any $\bx \ne \bx_k$, where the points $\bx_k$ are where $\la_k = \pm i\rho$ (the poles of $a$ and $b$, and thus of $\Om_0$).  It is easy to see that for any choice of $\varpi_k = \al+i\beta$, setting $z=\al,\rho = |\beta|$ will yield such a point $\bx_k$.  Thus the analysis below can be carried out at all but finitely many points in $\RR^2_+$.

We  use the Ansatz \refeq{eq:Chi-Ansatz} to rewrite the right-hand side of \refeq{eq:Dchi} in terms of inverse powers of $\la-\la_k$ and perform a residue analysis.  It turns out, thanks to \refeq{lakom}, that the coefficient of $(\la-\la_k)^{-2}$ in fact vanishes at $\la = \la_k$, and we end up with
$$
\left.(D\chi - \chi\Om_0)\chi^{-1}\right|_{\la = \la_k} =\left[ -\Om_0 + \sum_{k=1}^{2N} \frac{1}{\la-\la_k} \left(dR_k  - R_k \Om_0\right)\right]_{\la = \la_k} \chi^{-1}(\la_k).
$$
Since $\Om_0$ is  holomorphic at $\la = \la_k$ (and away from $\bx=\bx_k$), we obtain that the coefficient of $(\la-\la_k)^{-1}$ also has to vanish at $\la = \la_k$. Thus
\beq\label{eq:dR}
\left[dR_k(\bx)  - R_k (\bx)\Om_0(\la_k(\bx),\bx)\right] \chi^{-1}(\la_k(\bx),\bx) = 0.
 \eeq
Suppressing the $\bx$-dependence for brevity, we have
$$ dM_k = dR_k \Psi_{0k} + R_k (D\Psi_0)(\la_k) = [dR_k -R_k \Om_0(\la_k)] \Psi_{0k}$$
and using the rank one assumption, \refeq{eq:dR} becomes
$$
0= dM_k \Psi_{0k}^{-1} \chi_k^{-1} = dM_k \Psi_k^{-1} = d\bu_k \bv_k^*\Psi_k^{-1} + \bu_k d\bv_k^*\Psi_k^{-1}.
$$
Now by \refeq{Mkissing},  $M_k \Psi_k^{-1} = 0$ implies that $\bv_k^*\Psi_k^{-1} = 0$, which in turn gives $d\bv_k^*\Psi_k^{-1} = 0$ from the above equation.  Since $\Psi_k^{-1} = \Psi_{0k}^{-1}\chi_k^{-1}$ is rank $n-1$, $d\bv_k$ has to be a multiple of $\bv_k$, i.e. one must have
\beq\label{eq:vk}d\bv_k = h \bv_k,\eeq
for some {\em scalar}-valued 1-form $h$.  Applying $d$ now,  we obtain 
$$ 0 = dh \bv_k^*+ h\wedge h \bv_k^* = dh \bv_k^*,$$
which, since $\bv_k$ are nonzero, implies $dh = 0$, and hence by Poincar\'e's lemma, $h = d\gamma$ for some function $\gamma$.  Thus the differential equation \refeq{eq:vk} can be solved to get $\bv_k^* = e^{\gamma}\bv_k^*(\bx_0)$, i.e., the vector functions $\bv_k$ are scalar function multiples of a fixed vector $\bv_{k0}$.  From \refeq{eq:uvsys} it follows that if $\bu_k$ are scaled by a factor of $e^{-\ga}$, then $M_k$ and the new solution $q$ constructed by this method will be independent of the function $\ga$,  and therefore without loss of generality one can assume that $\bv_k$'s are {\em constant} to begin with.
\end{proof}

The equations for $M_k$ \refeq{eq:theMksys1} now become a {\em linear} system for the unknown vector functions $\bu_k$ in terms of  the constant vectors $\bv_k$, using \refeq{def:Skj}:
$$\sum_j a_{kj} \bu^*_j = \bb_k^*,\qquad a_{kj} := \frac{1}{\la_k - \overline{\la}_j} \bv_k^*S_{kj}\bv_j,\qquad \bb_k^* := -\bv_k^* \Psi_{0k}^{-1} \Ga.$$
These equations can be written in matrix form as  
\beq \label{linalg} AU^* = B^*,\eeq
where $U=U(\bx)$ is the ($n\times 2N$) matrix whose columns are the vector functions $\bu_k$, $A = A(\bx) = (a_{kj})$ is a $2N\times 2N$ matrix function, and $B=B(\bx)$ is the $n\times 2N$ matrix whose columns are the vector functions $\bb_k$ defined above.

If the matrix $A$ can be shown to be invertible\footnote{As far as we know, this point has not been addressed in previous studies of the dressing technique.}, at least in a neighborhood in $\RR^2_+$, then the above system has a unique solution $U^* = A^{-1}B^*$ in that neighborhood.
From there, one can then calculate the matrices $R_k$ and the dressing matrix $\chi$, and setting $\la =0$, the new solution $q$ is found to be
\beq\label{eq:solq}
q(\bx) = q_0(\bx) - \sum_{k=1}^{2N} \frac{1}{\la_k(\bx)} \bu_k(\bx) \bv_k^* \Psi_{0k}^{-1}(\bx) q_0(\bx).
\eeq
We will see in the examples of the next section, that the matrix $A$ in general is {\em not} invertible everywhere in the domain $\RR^2_+$, and indeed the zero set of $\det A$ has a geometric significance for the dressed harmonic map $q(\bx)$.  However, we can prove that under appropriate conditions on the arbitrary vectors $\bv_k$, the matrix $A(\bx)$ is in general invertible {\em for large }$|\bx|$, i.e. in a neighborhood of infinity.  
First we recall the following:
\begin{defn} 
A matrix $A \in \Cset^{n\times n}$ is called {\em strictly diagonally dominant} if $$\sum_{j\neq i} | a_{ij}| < |a_{ii}|,\qquad \mbox{ for all } i = 1,\dots,n.$$
\end{defn}
By the Levy-Desplanques Theorem (see e.g. \cite{Tau49}), the determinant of a strictly diagonally dominant matrix is non-zero.  We prove
\begin{thm}
Let $\bv_1,\dots \bv_{N} \in \Cset^n$ be $N$ arbitrary complex vectors satisfying the condition that, for all $k =1,\dots,N$,
\beq\label{cond:vk} \sum_{j\ne k} |\bv_k^*\Ga \bv_j |< \frac{1}{2} |\bv_k^*\Ga\bv_k|.\eeq
Set $\bv_{N+j} = \Ga \bv_j$ for $j=1,\dots,N$. 
Then, there exists $R>0$ such that for all $\bx \in \RR^2_+$ with $|\bx|>R$, the  matrix $A(\bx) \in \CC^{2N\times 2N}$ with elements $$ a_{ij} = \frac{1}{\la_i(\bx) - \overline{\la}_j(\bx)} \bv_i^*S_{ij}(\bx)\bv_j$$
where the $\la_i$'s are as in \refeq{lanumbers} and $S_{ij}$ as in \refeq{def:Skj},
is strictly diagonally dominant, and hence invertible.
\end{thm}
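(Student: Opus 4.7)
The plan is to establish strict diagonal dominance of $A(\bx)$ by asymptotic analysis of each entry as $|\bx|\to\infty$, then invoke the Levy-Desplanques theorem. The argument splits into: (i) behavior of the denominators $\la_i-\bar\la_j$, (ii) boundedness of the numerators $\bv_i^*S_{ij}\bv_j$, and (iii) a row-wise comparison exploiting the hypothesis \refeq{cond:vk}.

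First I would analyze the poles $\la_k(\bx)$. From $F_\bx(\la_k,\varpi_k)=0$ with $\varpi_k=\alpha_k+i\beta_k$ ($\beta_k>0$), the two roots sitting over $\varpi_k$ satisfy $\la_k\la_{N+k}=-\rho^2$ and $\la_k+\la_{N+k}=2(z-\varpi_k)$. Expanding $\la_k=(z-\varpi_k)\pm\sqrt{(z-\varpi_k)^2+\rho^2}$ one can show that as $|\bx|\to\infty$:  (a) $|\la_k-\bar\la_k|\to 2\beta_k>0$; (b) for $j\neq k$ with $j,k$ both in $\{1,\dots,N\}$ or both in $\{N{+}1,\dots,2N\}$, $|\la_k-\bar\la_j|$ remains bounded below (in fact by $\beta_k+\beta_j$, as one sees by isolating the imaginary parts); and (c) for $k$ and $j$ in opposite halves, $|\la_k-\bar\la_j|\to\infty$ at rate $|\bx|$, because the leading $\pm\rho$ parts of the two roots have opposite signs.

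Next I would verify that $\Psi_{0k}(\bx)$ stays bounded and, for ``asymptotically flat'' seeds (those for which $q_0(\bx)\to I$ and $W_0\to 0$ at spatial infinity), converges to a limit $\Psi^{\infty}_k$. Since by hypothesis the $\la_k$ avoid the poles of $\Psi_0$, the evaluation $\Psi_{0k}(\bx)$ is well-defined and regular in $\bx$; boundedness and convergence then follow from the Lax equation $D\Psi_0=-\Omega_0\Psi_0$ together with the decay of $\Omega_0$. In the natural ``asymptotically flat'' regime, $\Psi_0$ reduces at infinity to a function of $\varpi$ alone, forced by its boundary condition to be the identity, so $\Psi^{\infty}_k=I$ and $S_{kj}(\bx)\to\Ga$ uniformly in $k,j$.

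Combining these ingredients, the cross-block entries $a_{ij}$ with $i,j$ in opposite halves tend to $0$, since the numerator is bounded while the denominator diverges. For the same-block entries I would use $|\la_k-\bar\la_j|\geq\beta_k+\beta_j\geq\beta_k$ and $|\la_k-\bar\la_k|\to 2\beta_k$ to bound, for each row $k\leq N$,
$$\sum_{\substack{j\neq k\\ 1\leq j\leq N}}|a_{kj}|\;\leq\;\frac{1}{\beta_k}\sum_{j\neq k}|\bv_k^*\Ga\bv_j|+o(1)\;<\;\frac{|\bv_k^*\Ga\bv_k|}{2\beta_k}+o(1)\;=\;|a_{kk}|+o(1),$$
where the strict inequality invokes \refeq{cond:vk}. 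Adding the vanishing cross-block contribution and choosing $R$ so large that the $o(1)$ terms are absorbed yields strict diagonal dominance on row $k$ for $|\bx|>R$. Rows with index in the second half are handled by the same estimate, using $\bv_{N+j}=\Ga\bv_j$ and $\Ga^2=I$ to reduce to the hypothesis on $\{\bv_1,\dots,\bv_N\}$. Levy--Desplanques then gives invertibility.

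The main obstacle is step (ii): without a decay hypothesis on the seed $q_0$, neither the boundedness nor the convergence of $\Psi_{0k}(\bx)$ can be asserted \emph{a priori}, and the simplification $S_{kj}\to\Ga$ need not hold. A fully rigorous proof therefore either needs to state this asymptotic hypothesis explicitly or establish it as a preliminary result from the structure of axially symmetric harmonic maps on $\Rset^3$; subject to that input, the remaining estimates above are essentially mechanical.
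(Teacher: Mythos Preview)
Your overall strategy---asymptotics of the denominators, control of the numerators, row-wise comparison, Levy--Desplanques---is exactly the paper's. Two points deserve comment.

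First, your denominator claims (a) and (b) are not quite right. The quantity $|\la_k-\bar\la_k|$ does \emph{not} tend to a constant $2\beta_k$; in the elliptical coordinates $(r_k,\theta_k)$ the paper introduces (with $\rho=\sqrt{r_k^2+s_k^2}\sin\theta_k$, $z=z_k+r_k\cos\theta_k$) one has $\la_k=(r_k-is_k)(1+\cos\theta_k)$, so $|\la_k-\bar\la_k|=2s_k(1+\cos\theta_k)$, which depends on the angle $\theta_k$ and can be anywhere in $(0,4s_k]$. Likewise the same-block lower bound is $|\la_k-\bar\la_j|\geq s_k(1+\cos\theta_k)$, not $\beta_k+\beta_j$. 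Fortunately the common factor $(1+\cos\theta_k)$ cancels in the ratio, and the crucial factor of~$2$ between diagonal and off-diagonal bounds survives, so your conclusion (and the use of the hypothesis with the $\tfrac12$) is unaffected. But the intermediate statements should be corrected.

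Second, and more substantively: what you call the ``main obstacle''---getting $S_{kj}(\bx)\to\Ga$ without imposing decay on the seed $q_0$---is resolved in the paper not by an asymptotic-flatness hypothesis but by the involutive symmetry already established in Proposition~\ref{prop:psisym}. Equation~\eqref{psiasymp} shows that $\lim_{\la\to\infty}\Psi_0(\la,\bx)=J(\infty)$ is a constant matrix (independent of $\bx$) satisfying the quadratic constraint \eqref{eq:star2}. Since $|\la_k(\bx)|\to\infty$ as $|\bx|\to\infty$, one gets $\Psi_{0k}(\bx)\to J(\infty)$ and hence, from \eqref{def:Skj} together with the constraint on $J(\infty)$, $S_{kj}\to\Ga$. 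So the input you thought was missing is in fact supplied earlier in the paper as a structural consequence of the symmetric-space target; no extra decay assumption on $q_0$ is needed.
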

\begin{proof}
Let $\varpi_1,\dots,\varpi_n \in \Cset$ be $N$ distinct points in the upper-half complex plane, $\mbox{Im}(\varpi_j)>0$.  Set $$\varpi_j = z_j + i s_j.$$ For $j=1,\dots,N$ let  $(r_j,\theta_j)$ denote $N$ systems of elliptical coordinates, with real parameters $(z_j,s_j)$, on $\RR^2_+$. In terms of the Cartesian coordinates $(\rho,z)$ we have 
$$\rho =  \sqrt{r_j^2+s_j^2}\sin\theta_j,\qquad z = z_j + r_j\cos\theta_j.$$
Recall that $\la_j$ and $\la_{N+j}$ are the two roots of the quadratic polynomial $$\la^2 - 2(z-\varpi_j)\la -\rho^2 = 0.$$
In terms of the elliptical coordinates, we have
$$ \la_j = (r_j - i s_j) (\cos\theta_j +1),\qquad \la_{N+j} = (r_j +i s_j)(\cos\theta_j - 1).$$
Note that as $|\bx| \to \infty$, we have $r_j \to \infty$ for all $j$, and thus  $|\la_j| \to \infty$ as well.  

Fix $k \in \{1,\dots,N\}$.  Then
\beq\label{est:lak}|\la_k - \overline{\la}_k| = 2s_k(1+\cos\theta_k),\eeq
while, for $1\leq j\leq N$ and $j\neq k$,
\beq\label{est:laklaj} |\la_k - \overline{\la}_j| = \left| r_k(1+\cos\theta_k) - r_j(1+\cos\theta_j) - i\left( s_k(1+\cos\theta_k)+s_j(1+\cos\theta_j)\right)\right| \geq  s_k(1+\cos\theta_k).\eeq
For $N+1\leq j\leq 2N$ on the other hand,
\beq\label{est:laklajN}
|\la_k - \overline{\la}_j| =\left| r_k(1+\cos\theta_k) + r_j(1-\cos\theta_j) - i\left( s_k(1+\cos\theta_k)+s_j(1-\cos\theta_j)\right)\right| \geq r_k(1+\cos\theta_k).\eeq
It is clear that the same relations also hold for $k \in {N+1,\dots,2N}$, except that \refeq{est:laklaj} now holds for $N+1\leq j\leq 2N$ and \refeq{est:laklajN} holds for $1\leq j\leq N$.

Now recall that $\Psi_{0k}(\bx) = \Psi_0(\la_k(\bx),\bx)$. We have shown (Prop.~\ref{prop:psisym}) that, as $\la \to \infty$, the matrix $\Psi_0$ approaches a constant matrix $J$ that satisfies the quadratic constraint, see \eqref{psiasymp}. It thus follows from the definition of matrices $S_{ij}$ that $S_{ij} \to \Gamma $ as $|\bx| \to \infty$ for all $i,j=1,\dots,2N$, and hence
\beq\label{asymp:Skj} \bv_k^* S_{kj} \bv_j \to \bv_k^*\Ga\bv_j,\quad\mbox{ as }|\bx| \to \infty.\eeq
For vectors $\bv_j$ satisfying the condition \eqref{cond:vk}, and for $r_k$ large enough, we have
$$
\sum_{j=1,j\ne k}^N \frac{|\bv_k^*\Ga\bv_j|}{s_k} + \sum_{j=1}^N \frac{\bv_k^*\bv_j}{r_k} < \frac{|\bv_k^*\Ga\bv_k|}{2s_k}.
$$
Therefore, as $|\bx|\to \infty$, using \refeq{est:lak}-\refeq{est:laklajN},
$$\sum_{j\ne k} \frac{| \bv_k^*\Ga\bv_j|}{|\la_k - \overline{\la}_j|} < \frac{|\bv_k^*\Ga\bv_k|}{|\la_k - \overline{\la}_k|}.$$
Combining this with \eqref{asymp:Skj}, we arrive at the desired result, namely the diagonal dominance of $A$.
\end{proof}
We have thus shown
\begin{thm}\label{eq:mainthm}
Let $G$ be a real semisimple Lie group admitting a pair of commuting involutions which satisfy \eqref{eq:Involutionfixing} and suppose $K$ is a maximal compact subgroup of $G$.  Then the Hodge system \eqref{eq:W} for an axially symmetric harmonic map from $\mathbb{R}^3$ into the Riemannian symmetric space $G/K$ is integrable by way of ISM.  In particular, it is always possible to generate new harmonic maps from any given one using the dressing technique.
\end{thm}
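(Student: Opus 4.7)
The plan is to assemble the theorem directly from the ingredients already built up in Section 3, treating it as the capstone statement rather than a new computation. The two assertions—complete integrability and solvability by the dressing technique—should be handled separately, with essentially all of the real work done in the preceding theorems.

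For the integrability assertion, I would first observe that by the reformulation at the start of Section 3, any axially symmetric harmonic map $q:\RR^3\to G/K$ gives rise via $W=-dq\,q^{-1}$ to a $\fg$-valued one-form on the right half-plane satisfying the Hodge system \eqref{eq:W}. Lemma \ref{lem:cute} then converts this, for the explicit choice of $a(\la,\bx)$ and $b(\la,\bx)$ given there, into the zero-curvature condition $D\Om + \Om\wedge\Om = 0$ for the spectral one-form $\Om = aW + b\rho *W$. Theorem \ref{thm2.1} supplies the converse direction needed for a Lax pair: on any simply connected domain $\cU\subset\Cset\times\RR^2_+$ avoiding the branch points $\la=\pm i\rho$, the zero-curvature condition is equivalent to the existence of $\Psi:\cU\to H$ solving the overdetermined linear system \eqref{Psisys}. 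This is precisely the compatibility-condition formulation that defines integrability by ISM, so the first half of the theorem follows by citation.

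For the dressing assertion, I would pick a seed axially symmetric harmonic map $q_0$, solve \eqref{Psisys} for a corresponding $\Psi_0$, and then construct $\chi$ by the soliton Ansatz \eqref{eq:Chi-Ansatz} with $2N$ prescribed simple poles $\la_k(\bx)$ arranged in $T_\bx$-paired fashion as in \eqref{lanumbers}. Under Assumption \ref{involutionassumption} and \eqref{eq:Involutionfixing}, the two \emph{a priori} independent matrix equations \eqref{Req1}--\eqref{Req2} collapse (via the consistency relation $M_{N+k} = H_k M_k \si(J(\varpi_k))\Ga$ derived just before \eqref{eq:theMksys1}) to the single system \eqref{eq:theMksys1}. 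Imposing the rank-one reduction $M_k = \bu_k\bv_k^*$ with $\bv_{N+k}=\Ga\bv_k$ and the $\bx$-independence of the $\bv_k$ established in the displayed theorem preceding the main statement, one is left with the linear algebraic system \eqref{linalg} for the $\bu_k$. The last theorem in the section guarantees that, whenever the constant vectors $\bv_1,\dots,\bv_N$ are chosen to satisfy the diagonal-dominance condition \eqref{cond:vk}, the coefficient matrix $A(\bx)$ is strictly diagonally dominant—and hence invertible by Levy--Desplanques—on a neighborhood of infinity in $\RR^2_+$. Solving there produces $\bu_k(\bx)$, hence $R_k(\bx)$, hence a well-defined dressing matrix $\chi$ satisfying \eqref{asympchi}--\eqref{invsymchi} by construction. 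Theorem \ref{thm2.2} then certifies that $q(\bx) = \chi(0,\bx)q_0(\bx)$ is a bona fide axially symmetric harmonic map into $G/K$.

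The honest point to flag is that the resulting solution $q$ is only guaranteed to exist on the open set where $A(\bx)$ is invertible, and generically the zero locus of $\det A$ is nonempty—these zeros have physical meaning as the ring singularities noted before \eqref{lanumbers}. The theorem as stated, however, only promises that new harmonic maps can be \emph{generated}, not that they are globally regular, and the diagonal-dominance argument already secures existence on a nonempty open set, which is all that is needed. The one place where I would be most careful in writing this up is verifying that the symmetry conditions \eqref{asympchi}--\eqref{invsymchi} are actually preserved by the rank-one Ansatz together with the pairing $\bv_{N+k}=\Ga\bv_k$ and the pole pairing $\la_{N+k}=T_\bx(\la_k)$; this is where Assumption \ref{involutionassumption} is essential, since it is what allows the reality and involutive constraints to be enforced simultaneously via a single scalar pairing of the $\bv_k$'s rather than as independent matrix conditions.
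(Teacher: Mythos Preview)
Your proposal is correct and mirrors the paper's own treatment: the paper presents Theorem~\ref{eq:mainthm} as a capstone (``We have thus shown'') immediately after the diagonal-dominance theorem, with no separate proof, the content being exactly the chain Lemma~\ref{lem:cute} $\to$ Theorem~\ref{thm2.1} for integrability, and Ansatz~\eqref{eq:Chi-Ansatz} $\to$ Assumption~\ref{involutionassumption} collapse $\to$ rank-one reduction $\to$ invertibility of $A(\bx)$ $\to$ Theorem~\ref{thm2.2} for dressing. Your flagged caveat about $q$ existing only where $\det A \ne 0$ is also consistent with the paper, which explicitly notes this just before stating the diagonal-dominance theorem.
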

 We note in passing that even though it appears that one needs to know the whole solution $\Psi_0(\la,\bx)$ to the Lax system \refeq{Psisys} with initial data $q_0$ in order to find the matrices $M_k$,  the only information one needs about $\Psi_0$ is its value {\em at the new poles} $\la = \la_k(\bx)$, i.e. the matrices $\Psi_{0k}(\bx)$.   These can be found by ``integration" (more precisely, exponentiation) from the seed solution $q_0(\bx)$, in the following manner:

It is straightforward to check that for any given  map $\phi(\la,\bx)$, the map $\psi(\bx) = \phi(\la_k(\bx),\bx)$ satisfies $(D\phi)(\la_k(\bx),\bx) = d\psi(\bx).$ In particular, the matrices $\Psi_{0k}$ satisfy the following equation
$$d\Psi_{0k} = - \Omega_{0k} \Psi_{0k} = - (a_k W_0 + \rho b_k *W_0) \Psi_{0k},$$
 where the $k$ index denotes evaluation at $\la = \la_k(\bx)$ as before, and $W_0$ we recall is the Maurer-Cartan form corresponding to the seed solution $q_0$.  Since $\Psi_0$ is an element of the group $G$, the Maurer-Cartan form corresponding to $\Psi_{0k}$,  $\mathcal{W}_k := -d\Psi_{0k} \Psi_{0k}^{-1}=a_k W_0 + \rho b_k *W_0$, is {\em known} once a seed solution and a set of poles $\varpi_k$ are specified.  Using the parametrization we have for the group, it is thus possible to recover $\Psi_{0k}$ from the Lie algebra element $\mathcal{W}_k$ by exponentiation.

\section{Applications: Generating Solutions}

\subsection{Agreement with previous results}
The techniques of this paper may be used to confirm calculations for stationary axisymmetric solutions to the Einstein Vacuum and Einstein-Maxwell equations.  We briefly indicate the approaches taken in these two particular cases of interest in gravitation before exhibiting a general approach which subsumes both.
The simple pole ansatz \eqref{eq:Chi-Ansatz} for $\chi$  appearing in this paper was introduced by \cite{ZB-I, ZB-II}, where stationary axisymmetry solutions to the Einstein vacuum equations were first studied.  The authors consider a two-dimensional representation of $SL(2,\mathbb{R})$, calculating $N$-soliton solutions explicitly.  Using the Minkowski metric  as an initial seed, the authors recover the Kerr-NUT metric in the 1-soliton case.  In contrast with the approach of this paper, the poles $\la_k$ appearing in $\chi$ are chosen in conjugate pairs and thus a {\em different} constant $\varpi_k$ is needed for each pole $\la_k$ .

Generalizing the approaches used in \cite{MaMi67, Misner} for the vacuum equations, the authors in \cite{EGK84} formulate the Einstein-Maxwell equations as a harmonic map into a symmetric space having isometry group $SU(2,1)$.  The choice of a 3-dimensional representation of this group realises the symmetric space as the complex hyperbolic plane $\mathbb{H}_\CC=SU(2,1)/S(U(2)\times U(1))$.  Although explicit examples of dressing do not appear in \cite{EGK84}, their discussion motivates our generalization to the complex Grassmann manifolds $SU(p,q)/S(U(p)\times U(q))$.

\subsection{New results: Noncompact Grassmann Manifolds}\label{supq}

We shall describe in detail a solution-generation method for the harmonic map equation, in the particular case where the domain is $\RR^3$ and the target is a {\em non-compact Grassmann manifold} $\cG_{p,q}$, a  $2pq$-dimensional Riemannian symmetric space realized as a quotient of the group $G=SU(p,q)$ by its maximal compact subgroup $K=S(U(p)\times U(q))$:
$$ \cG_{p,q} := SU(p,q)/S(U(p)\times U(q)).$$
  We restrict our attention further to axially symmetric solutions, the domain thus becoming effectively two-dimensional.  The results mentioned above on the Einstein vacuum and Einstein-Maxwell equations are thus about axisymmetric harmonic maps into $\cG_{1,1}$ and $\cG_{2,1}$, respectively.  As a further example, the study of the particular group $SU(2,2)$, and the associated Grassmann manifold $\cG_{2,2}$ is of physical interest, as this group is the universal (double) cover of $SO(4,2)$, the conformal group of  the Minkowski spacetime $\mathbb{R}^{3,1}$ \cite{Coq90, WehrBarut94}.  

We begin by recalling (e.g. \cite{Kna86}) that the real semi-simple Lie group $SU(p,q)$ can be identified with the subgroup of $SL(n,\Cset)$ consisting of matrices in $\Cset^{n\times n}$ preserving a pseudo-Hermitian quadratic form having $p$ plus signs and $q$ minus signs, where $p+q=n$:
$$G=SU(p,q)= \{ g \in SL(p+q,\Cset)  |  \Ga (g^*)^{-1} \Ga = g \}.$$ 
Here, $\Ga = \Ga_{p,q}$ is the $n \times n$ block-diagonal matrix $\diag(I_{p\times p}, -I_{q\times q})$.  Note that one may equally well choose any Hermitian matrix $\Ga$ conjugate to $\Ga_{p,q}$ to define the unitarily equivalent presentation of the group $SU(p,q)$, so for ease of notation, we suppress the subscripts $p,q$ on the matrix $\Ga$.  The corresponding Lie algebra to the group is
$$\mathfrak{su}(p,q) = \{ X \in \mathfrak{gl}(p+q,\Cset) \ |\ X^* \Gamma + \Gamma X = 0,\mbox{tr }X=0\}.$$

Define the following two involutions on $H=SL(p+q, \Cset)$
\beq\label{eq:tausigma}
\tau (g) = \Ga (g^*)^{-1} \Ga, \qquad \sigma (g) = \Ga g \Ga.  
\eeq
It is easy to see that $\tau$, $\sigma$ are involutive automorphisms, $SU(p,q) = \{ g \in SL(n, \Cset) \ | \ \tau(g) = g\}$, and $K = \{ g \in G\ | \ \sigma(g) = g\}$ is isomorphic to $S(U(p)\times U(q))$, which is a maximal compact subgroup of $G$, making $G/K$ a symmetric space.  The induced Lie algebra involutions are given by
\beq\label{eq:tausigmastar}
\tau_* X = -\Gamma X^* \Gamma, \qquad \sigma_* X = \Ga X \Ga, \qquad X \in \mathfrak{g}.
\eeq
The involution $\sigma_*$ complexifies to the complex-linear map $\sigma_\Cset(X)=\Ga X \Ga$ so that one has a diagram in Figure~\ref{su22realforms} analogous to that of Figure~\ref{realforms2}.
\begin{figure}[h]
\[
\xymatrixcolsep{3pc}
\xymatrix{
& \fg_\Cset = \mathfrak{sl}(n, \Cset)   \ar@{-}[dl]^{}_{\sigma_\Cset}  \ar@{-}[rd]^{\tau_* }_{} & \\
\fk_\Cset = \fs(\mathfrak{gl}(p)\times\mathfrak{gl}(q))   \ar@{-}[dr]^{\tau_{|_{\fk_\Cset}}}_{} && \fg=\fs\fu(p,q)  \ar@{-}[dl]^{}_{\sigma_*} \\
& \fk=\fs(\fu(p)\times \fu(q))& 
}
\]
\caption{$\tau_*(X)=-\Ga X^*\Ga$ ($\Cset$-linear ) and $\sigma_\Cset(X)=\Ga X \Ga$ ($\bar{\Cset}$-linear) are in bijective correspondence}\label{su22realforms}
\end{figure}

The induced involutions give rise to the Cartan Decomposition $\fg = \fk \oplus \fp$.  Here,
$\fp = \{ X \in \fg\ | \ \Ga X \Ga = -X\},$ and we observe that $\fk$, the Lie algebra of $K$, is  a maximal compact subalgebra of $\fg$, which is indeed isomorphic to $\fs(\fu(p) \times \fu(q))$.  The quadratic constraint on the image of $G/K$ under the Cartan embedding is
\beq
G/K  = \{ q \in G\ |\  q \Ga q \Ga = I \}.
\eeq
It is straightforward to confirm that the corresponding Lie algebra, $\mathfrak{su}(p,q) = \fg$ has dimension $(p+q)^2-1$, while $\dim\fk =p^2+q^2-1$, and thus $\dim \cG_{p,q}=\dim\fp=2pq$; further details in this case are carried out in \cite[p. 39]{Barut}. 

One also notes that since $G=NAK$ by the Iwasawa Decomposition, each $g\in G$ can be expressed in the form $g=nak$ for unique $k\in K$, $a \in A$, $n \in N$; consequently, for $G=SU(p,q)$ with the above defined involutions
$$g\sigma(g)^{-1}= na \sigma (a^{-1}) \sigma (n^{-1}) = na \Ga a^{-1} n^{-1} \Ga.$$
On the other hand, by the definition of $G$ and the fact that $\tau(g) = \Ga (g^*)^{-1} \Ga$, 
$$ g \sigma(g)^{-1} = g \Ga g^{-1} \Ga = gg^* = nakk^*a^*n^* = naa^*n^*,$$
since $k\in K$ is a unitary matrix.  Thus in particular the image of the symmetric space under the Cartan embedding consists of Hermitian matrices.  Moreover, having parametrizations for the subgroups $A$ and $N$ suffices for obtaining a parametrization of the symmetric space.  This will be carried out explicitly for $SU(2,1)$ in the next section.

A crucial fact about Grassmann manifolds is that the natural embedding $\cG_{p',q'} \hookrightarrow \cG_{p,q}$, for  $p'\leq p$, $q'\leq q$, is {\em totally geodesic}.  This is due to the following more general result (see \cite[Thm. IV.7.2]{Helgason-1}):
\begin{thm}\label{six}
Let $G$ be a semi-simple Lie group and  $K$ a compact subgroup of $G$.  Let $X=G/K$ and $X'=G'/K'$, where $G'$ is a subgroup of $G$ and $K' = G' \cap K$.   Suppose $\sigma$ is the involution on $G$ that fixes $K$ and $\sigma ' = \sigma |_{X'}$.  Then the natural embedding $X'\hookrightarrow X$ is totally geodesic.
\end{thm}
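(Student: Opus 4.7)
The plan is to reduce total geodesicity of the natural embedding $\iota:X'\hookrightarrow X$ to an algebraic statement about compatible Cartan decompositions of $\fg$ and $\fg'$, and then invoke the standard description of geodesics in Riemannian symmetric spaces.  First I would let $\si_*:\fg\to\fg$ denote the differential of $\si$ at the identity, yielding the Cartan splitting $\fg = \fk\oplus\fp$ into its $(+1)$- and $(-1)$-eigenspaces.  The hypothesis that $\si$ restricts to an involution of $G'$ fixing $K' = G'\cap K$ ensures that $\fg'$ is $\si_*$-stable, so one obtains the parallel decomposition $\fg' = \fk'\oplus\fp'$ with $\fk' = \fg'\cap\fk$ and $\fp' = \fg'\cap\fp$.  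Under the canonical identifications $T_oX \cong \fp$ and $T_{o'}X' \cong \fp'$, where $o=eK$ and $o'=eK'$, the differential $d\iota_{o'}$ at the base point is exactly the inclusion $\fp'\hookrightarrow\fp$.

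Next I would invoke the classical characterization of geodesics in a Riemannian symmetric space (Helgason, Ch.~IV): every maximal geodesic of $X$ through $o$ has the form $\ga(t)=\exp(tZ)\cdot o$ for some $Z\in\fp$, and analogously for $X'$.  Given any $Z\in \fp'\subset\fg'$, the one-parameter subgroup $\exp(tZ)$ lies entirely in $G'$, so $\exp(tZ)\cdot o = \iota(\exp(tZ)\cdot o')\in \iota(X')$ for all $t\in\RR$.  This shows that every geodesic of $X$ issuing from $o$ with initial velocity tangent to $X'$ remains in $\iota(X')$, which is the defining infinitesimal condition for total geodesicity at $o$.  To extend the property from $o$ to an arbitrary point $p\in X'$, I would use $G'$-homogeneity of $X'$: pick $g'\in G'$ with $g'\cdot o=p$, noting that the action of $g'$ is an isometry of $X$ that preserves $\iota(X')$ and carries geodesics to geodesics, so the local property transports.

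The main subtlety, and the step I would spend the most care on, is confirming compatibility of Riemannian structures, so that the curves $\exp(tZ)\cdot o$ for $Z\in\fp'$ are simultaneously geodesics of $(X',\bh')$ with its intrinsic symmetric-space metric and of $(X,\bh)$.  At the base point, $\bh$ on $T_oX\cong\fp$ is the restriction of a positive-definite $K$-invariant form on $\fg$ (arising from the negative of the Killing form on $\fp$, up to scaling); restricting once more to $\fp'\subset\fp$ yields a positive-definite $K'$-invariant form on $T_{o'}X'$ that by $G'$-invariance extends uniquely to the metric $\bh'$ on $X'$.  With this matching of metrics secured, the exponential curves above qualify as geodesics of both spaces, and the conclusion follows.
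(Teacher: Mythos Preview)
Your argument is correct, and it shares its opening move with the paper: both begin by noting that the hypotheses yield compatible Cartan decompositions $\fg=\fk\oplus\fp$ and $\fg'=\fk'\oplus\fp'$ with $\fp'\subset\fp$.  From there the two diverge.  The paper does not unwind the geodesic description at all; it simply observes that $\fp'$, being the $(-1)$-eigenspace of $\si_*$ inside a Lie subalgebra, satisfies $[\fp',[\fp',\fp']]\subset[\fp',\fk']\subset\fp'$, so $\fp'$ is a \emph{Lie triple system} in $\fp$, and then invokes Helgason's structural theorem (Thm.~IV.7.2) identifying Lie triple systems with tangent spaces of totally geodesic submanifolds through $o$.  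Your route instead uses the explicit exponential form of geodesics in a symmetric space and checks directly that $\exp(tZ)\cdot o$ stays in $X'$ for $Z\in\fp'$, then transports by $G'$-homogeneity.  Your approach is more elementary and self-contained, making the mechanism visible without quoting the Lie-triple-system correspondence; the paper's approach is terser but leans on a heavier cited result.  Your closing discussion of metric compatibility is careful but, for the bare statement that ambient geodesics tangent to $X'$ remain in $X'$, not strictly needed---it matters only when one also wants the induced and intrinsic metrics on $X'$ to agree, which is indeed how the result gets used later in the paper.
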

The proof consists of noting that the hypotheses imply Cartan decompositions $\fg = \fk + \fp$ and $\fg' = \fk'+\fp'$, and that $\fp'$ is thus a Lie triple system in $\fp$: $[\fp',[\fp',\fp']]\subset \fp$, which is equivalent to the embdding being totally geodesic.

Application of this theorem to the case at hand is immediate: Set $G=SU(p,q)$, $G'=SU(p',q')$, where $p'\leq p$ and $q'\leq q$.  Since any $g' \in G'$ satisfies ${g'}^* \Ga_{p',q'} g' = \Ga_{p',q'}$ it follows that the matrix representation of $g'$ has block-diagonal form, with a $p'\times p'$ block followed by a $q'\times q'$.  In order to obtain a corresponding element in $G$ it is thus enough to insert a copy of $I_{n-n',n-n'}$ between the two diagonal  blocks of 
$g'$.  In other words the embedding $\mathbf{i} : G' \to G$ is simply
$$
g' = \left(  \begin{array}{cc} g^+_{p'\times p'} & \\ & g^-_{q'\times q'} \end{array} \right) \mapsto g = \left(\begin{array}{ccc} g^+_{p'\times p'} & & \\ 
& I_{n-n',n-n'} \\ & & g^-_{q'\times q'} \end{array} \right).
$$
It is then clear that $g$ satisfies $g^*\Ga_{p,q}g = \Ga_{p,q}$ and is thus an element of $G$.   $K$ and $K'$, the maximal compact subgroups of $G$ and $G'$, are afforded by the fixed points of the Cartan involution given by $\sigma_{\Cset}(X)= \Ga X \Ga$, which agrees with $\sigma_*$ on $\mathfrak{su}(p,q)$.  That is to say, $K= S(U(p) \times U(q))$, $K' = S( U(p') \times U(q'))$.

With the above machinery in place, one is in a position to apply the results of Theorem \ref{eq:mainthm} directly, generating solutions for the harmonic map equation in this context.  We carry out the procedure explicitly in the subsequent sections for $N=1$ and a physically meaningful initial seed $q_0$.

\subsection{Einstein's vacuum equations: a 1-soliton calculation}
It is known \cite{MaMi67,Ern68a,Car73} that the Einstein vacuum equations $$\bR_{\mu\nu} = 0,$$ under the assumption of existence of two commuting non-null Killing fields $\bK,\tilde{\bK}$, reduce to the equations for an axially symmetric harmonic map $f :\cM^3 \to \cN^2$.  In the case one of the Killing fields is spacelike and the other timelike (outside a compact region in the spacetime,) the domain $\cM$ is the Euclidean space $\RR^3$.  When both Killing fields are spacelike the domain is the Minkowski space $\cM = \RR^{2,1}$, and in that case one usually calls $f$ a {\em wave map} instead of a harmonic map (since the equations are hyperbolic and in fact can be written as a semilinear system of wave equations). In either case the target  $\cN$ is the real hyperbolic plane $\Hset_\RR$, which is a symmetric space.  By the results of the main theorem, the system is therefore completely integrable, and new solutions can be found from old ones by the vesture method \cite{ZB-I}.   This fact can be established directly, by taking $G=SU(1,1)$, $K=U(1)$, and noticing $\Hset_\RR = G/K$.

Einstein's vacuum equations in the stationary axisymmetric case can be written as a system for $x$ and $y$, where for the spacetime metric $\bg$, $x:= \bg(\bK,\bK)$.  Thus $x>0$  since $\bK$ is assumed spacelike outside the axis of symmetry.  Let $\bb := i_\bK *d\bK$ be the \emph{twist form} for the Killing field $\bK$.  It is not hard to see that the Ricci-flatness of $\bg$ implies $d\bb=0$, and thus we may let $y$ denote a potential function for $\bb = dy$.  Let $(z,\rho,\theta)$ denote Euclidean cylindrical coordinates on $\RR^3$.  Then the equations satisfied by $x=x(z,\rho)$, $y=y(z,\rho)$ are exactly those for an axially symmetric harmonic map into the upper half plane model of $\Hset_\RR$.  Indeed, the image of the map under the Cartan embedding of $\Hset_\RR$ into $SL(2,\RR)$ is the following matrix
\beq \label{xyhyp} q(\rho,z) = \frac{1}{x} \left(\begin{array}{cc} x^2 +y^2 & y \\ y & 1\end{array}\right).\eeq

Now the Minkowski space $\RR^{3,1}$  is clearly a stationary, axisymmetric solution of the vacuum Einstein equations and as such, it is a natural choice for a seed solution in the ISM scheme.  It corresponds to the constant map $$x = 1,\qquad y = 0,$$ where we have let $\bK$ denote the timelike Killing field $\frac{\partial}{\partial t}$.  The corresponding element in the symmetric space $G/K$ is therefore the identity matrix, $$q_0 = q_0(\bx)=I_{2\times 2}:\RR^2_+ \longrightarrow  G/K=SU(1,1)/S(U(1)\times U(1)).$$ 
 Note that any other asymptotically flat solution of the Einstein vacuum equations must have the same behavior as the above map, in the limit $|\bx|\to \infty$.

Let $\{\varpi_k\}_{k=1}^N$ be a subset of $\Cset\setminus \RR$, and let $\la_k(\bx)$ be such that
$\varpi(\la_k(\bx),\bx)  = \varpi_k = \varpi(\la_{k+N}(\bx),\bx)$.  The set $\{\la_k(\bx)\}_{k=1}^{2N}$ is the set of new poles, for a solution $\Psi$ of the Lax system, which is to be found using the vesture method:  We have, trivially,  $\Psi_{0k}(\bx) = I_{2\times 2}$. 
   From here one finds the matrices $S_{kj}$ and the numbers $a_{kj}$.  One can then solve the linear system \refeq{eq:uvsys} to find the $\bu_k$'s in terms of the constants $\varpi_k$ and $\bv_k$, thus ending up with a $4N$-(real)parameter new solution $q(\bx)$ as in \refeq{eq:solq}.  We'll do this now for $N=1$:

Fix a complex number $\varpi = i s$, $s \in \Rset_+$ and let $\la_1, \la_2$ be the two roots of
$$p(\bx, \la):=\la^2 - 2(z-is)\la - \rho^2 = 0.$$
Visibly, $\Om_0= \mathbf{0}_{2\times 2}$ and by inspection, a simple corresponding solution $\Psi_0$ of the Lax system \refeq{Psisys} is $I_{2\times 2}$. 

 To solve \refeq{eq:theMksys1}, note that $S_{kj}=\Ga$ and make the choice $v_1=(\alpha,\delta)^t$, so that $v_2=\Ga v_1=(\alpha, -\delta)^t$, where $\alpha, \delta$ are arbitrary complex parameters.  It is possible to keep $v_1$ free, noting there will be relations between the parameters (e.g., symmetries, quadratic constraint).  For the purposes of this paper, the above assumption allows for simpler calculations.  The resulting linear system $AU^* = B^*$ is written as
\beq\label{linsys}
\left[ \begin{array}{cc} \frac{1}{\la_1 - \overline{\la_1}}(|\alpha|^2-|\delta|^2) &  \frac{1}{\la_1 - \overline{\la_2}}(|\alpha|^2+|\delta|^2) \\  \frac{1}{\la_2 - \overline{\la_1}}(|\alpha|^2+|\delta|^2) &  \frac{1}{\la_2 - \overline{\la_2}}(|\alpha|^2-|\delta|^2) \end{array} \right] \left[ \begin{array}{c} u_1^*  \\  u_2^*  \end{array} \right] = \left[ \begin{array}{cc} -\overline{\alpha} &  \overline{\delta} \\ -\overline{\alpha} &  -\overline{\delta} \end{array} \right].
\eeq
Upon solving for each of the vectors $u_1, u_2$, the harmonic map is given by
\beq \label{fnlsol} q(\bx) = I_{2\times 2} - \frac{1}{\la_1}u_1v_1^* - \frac{1}{\la_2}u_2v_2^*.\eeq
Anticipating the Kerr solution as the outcome, we change from Weyl coordinates $(\rho, z)$ to Boyer-Lindquist (oblate spheroidal) coordinates $(r, \theta)$ by setting
\beq\label{eq:oblate}
\rho = \sqrt{(r-m)^2+s^2} \sin \theta, \qquad z=(r-m)\cos \theta,
\eeq
for real parameters $m, s$.  Then the roots of $p(\bx, \la)$ are given by
$\la_{1,2}=[(r-m)\pm is][\cos \theta \mp1]$.  
Solving the linear system \eqref{linsys} we obtain
$$U = \frac{1}{2D} \left(\begin{array}{cc} 
\al ( \frac{iA}{s (\cos \theta +1)} - \frac{B}{r-m-is} ) & \al (  \frac{-iA}{s (\cos\theta -1)} + \frac{B}{r-m+is} ) \\
\de ( \frac{-iA}{s (\cos\theta +1)} - \frac{B}{r-m-is} ) & \de (  \frac{-iA}{s (\cos\theta -1)} - \frac{B}{r-m+is})
\end{array}\right)$$
where we have denoted 
$$ A := |\al|^2 - |\de|^2,\quad B :=  |\al|^2 + |\de|^2,\quad D:= \frac{B^2}{4((r-m)^2+s^2)} - \frac{ A^2}{ 4 s^2\sin^2\theta}.$$
Plugging this into \eqref{fnlsol} we arrive at
$$
q = \left(\begin{array}{cc}1 + \frac{8|\al|^2|\de|^2 s^2}{F} & \frac{1}{F}[-4s\al \bar{\de} (i A(r-m)-Bs\cos\theta)] \\
\frac{1}{F}[4s\bar{\al}\de (i A(r-m) + Bs\cos\theta)] & 1+\frac{8|\al|^2|\de|^2 s^2}{F} \end{array} \right)
$$
where $F:= A^2((r-m)^2+s^2)-B^2s^2\sin^2\theta$.  

The above (after adjusting its determinant to be one if necessary) is a harmonic map into the symmetric space $\cG_{1,1}$, for any value of the real parameters $m,s$ and complex parameters $\al,\de$.  Using the totally geodesic embedding of $\cG_{1,1}$ into $SU(1,1)$ we can view $q(\bx)$ as an element of $SU(1,1)$ and consequently of $SL(2, \Rset)$, after a Cayley transform:  Let $$Q := \frac{1}{\sqrt{2}} \left( \begin{array}{cc} 1 & i \\ i & 1\end{array}
\right).$$ Then $q' := QqQ^*\in SL(2,\RR)$ for $q\in SU(1,1)$, and thus, by \eqref{xyhyp}, the Ernst potential $\cE = x+ iy$ corresponding to a new stationary axisymmetric solution of vacuum equaions  can be obtained by setting 
$$x = \frac{1}{q'_{22}} = \frac{A^2((r-m)^2+s^2)-B^2 s^2 \sin^2\theta}{A^2((r-m)^2+s^2)-B^2 s^2 \sin^2\theta +2s^2(B^2-A^2) + 4sn_1A(r-m) -4s^2Bn_2\cos\theta}$$
and 
$$ y = \frac{q'_{12}}{q'_{22}} = \frac{4sAn_2(r-m) +4B n_1 s^2 \cos\theta}{A^2((r-m)^2+s^2)-B^2 s^2 \sin^2\theta +2s^2(B^2-A^2) + 4sn_1A(r-m) -4s^2Bn_2\cos\theta}
$$
where we have set $\al \bar{\delta} = n_1 + i n_2$.  It is now evident that if we make the following identifications:
$$ A = s,\qquad B = a := \sqrt{m^2+s^2},\qquad n_1 = \frac{m}{2}, \qquad n_2 = 0,$$
we would obtain
$$ x = \frac{r^2 - 2mr + a^2 \cos^2\theta}{r^2+ a^2 \cos^2\theta},\qquad y = \frac{2ma\cos\theta}{r^2+ a^2 \cos^2\theta}$$
which are precisely the expressions for the Kerr metric in Boyer-Lindquist coordinates.
We have thus shown that give any two positive values $m$ and $s$ it is possible to choose the parameters $\al$ and $\de$ in such a way as to recover the Kerr  metric with total mass $m$ and total angular momentum per unit mass $a  = \sqrt{m^2 +s^2}$, namely it is enough to set
$$\al = \sqrt{ \half (s + \sqrt{m^2+s^2})},\qquad \de = \sqrt{ \half (-s + \sqrt{m^2+s^2})}.$$
 Note that the Kerr solution thus obtained is necessarily naked, since $a>m$.  

It is likewise possible to obtain the {\em Kerr-Newman solution} to the Einstein-Maxwell equations as a 1-solitonic harmonic map into the complex hyperbolic plane $\Hset_\Cset$ by dressing the  trivial solution corresponding to the Minkowski metric, in the same manner as in the above, as will be demonstrated in the next section.

\subsection{The Einstein-Maxwell system}
It is quite remarkable that the Einstein-Maxwell system,
 $$
 \mathbf{R}_{\mu\nu} - \half \mathbf{g}_{\mu\nu}R = \kappa T_{\mu\nu};\qquad T_{\mu\nu} := F_\mu^\la *F_{\nu\la} - g_{\mu\nu} F_{\alpha\beta}F^{\alpha\beta};\qquad dF = 0;\qquad d*F = 0,
 $$
 which are the equations governing the interaction of the spacetime metric $\bg$ with an electromagnetic field $\bF$ permeating that spacetime, under the assumption of existence of two commuting Killing fields that also leave the field  invariant, still reduce to the equations for an axially symmetric harmonic map $f:\cM \to \cN'$, where $\cM$ is as in the above, and $\cN'$ is the {\em complex} hyperbolic plane $\Hset_\Cset$ \cite{Ern68b,Car73,Maz84}.  Moreover, this target is  a symmetric space, and indeed a non-compact Grassmann manifold $\cG_{2,1} \equiv \Hset_\Cset = G/K$ with $G=SU(2,1)$, and $K=S(U(2)\times U(1))$, so that once again, integrability is established and vesture method can be used to generate new solutions \cite{Ale81,EGK84}.

To follow in the steps of the last example, it is computationally advantageous  to make a change of basis and consider a unitarily equivalent representation of $SU(2,1)$.  In particular, we use
$$G = SU(2,1) = \{ g \in GL(3,\Cset) \ |\ g^*\tilde{\Ga} g = \tilde{\Ga}\},
\qquad\tilde{\Ga}:=\left[ \begin{array}{ccc} 0 & 0 & -i \\ 0 & 1 & 0 \\ i & 0 & 0 \end{array}\right]. $$ 
The involutions are correspondingly defined as in \eqref{eq:tausigma}, \eqref{eq:tausigmastar},
so that the Lie algebra of $G$ is given by  
$$\fg = \fs\fu(2,1)=\{ X \in \mathfrak{gl}(3,\Cset) \ |\ X^*\tilde{\Ga} + \tilde{\Ga} X = 0\}.$$
We find a basis for $\fg$ and confirm that it is eight-dimensional:  Indeed, $\fs\fu(2,1) = \mbox{span}\{X^1,\dots X^{8}\}$, where
$$
\begin{array}{llll}
 X^1 =  E_{13} & 
 X^2 = E_{31} &
 X^3 = E_{11}-E_{33} &
 X^4 = i (E_{11} -2E_{22}+E_{33}) \\
 X^5 = E_{12}+iE_{23}&
 X^6 =iE_{12}+E_{23}&
 X^7 = E_{21}-iE_{32} &
 X^8 = iE_{21}-E_{32}.
\end{array}
$$
Here $E_{ij}$ denote the members of the standard basis for $\Cset^{3\times 3}$ and $C^k_{ij}$ denote the structure constants of $\fs\fu(2,1)$, given in this basis by the commutation relations $[X^i,X^j] = C^k_{ij}X^k$.  The relations defining these constants are given in the table below.
\begin{table}[h]
$$
\begin{array}{c|ccccccc}
  & X^2 & X^3   & X^4   & X^5 & X^6 & X^7 & X^8 \\
\hline 
X^1 & X^3 & -2X^1  & 0 & 0 & 0 & -X^6 & X^5 \\
X^2 &   & 2X^2 & 0 & -X^8 & X^7 & 0 & 0 \\
X^3 &   &       &  0 & X^5 & X^6 & -X^7 & -X^8 \\
X^4 &   &       &       & 3X^6 & -3X^5 & -3X^8 & 3X^7 \\
X^5 &   &  (\mbox{anti-sym.})     &       &   & 2X^1 & X^3 & X^4 \\
X^6 &   &       &       &   &      & X^4 & -X^3 \\
X^7 &   &       &       &   &      &       & 2X^2 \\
\end{array}
$$
\caption{Commutation table for $\fs\fu(2,1)$ structure constants}\label{commtab}
\end{table}

We use Table \ref{commtab} to easily determine the $\pm 1$ eigenspaces of $\sigma$ in $\fg$ to be
\bea
\fk &=& \{ X \in \fg\ | \sigma_* X = X\} = \mbox{span } \{ X^1-X^2, X^4, X^5-X^7, X^6+X^8\}\\ 
\fp &=& \{ X \in \fg\ | \sigma_* X = -X\}= \mbox{span } \{ X^1+X^2, X^3, X^5+X^7, X^6-X^8\}.
\eea
It then follows (e.g. \cite[p. 39]{Barut}) that the Lie algebra $\fg$ has the Cartan decomposition $\fg = \fk \oplus \fp$, as described in section \ref{symmsp}.

To write down the Iwasawa Decomposition of $\mathfrak{g}$ in our basis, we choose
$$\fa = \mbox{span }\{X^3\},$$
which is clearly a maximal subspace of $\fp$ that is an abelian subalgebra of $\fg$.  For this $\fa$, we compute the root system to be as follows, where we have identified $\fa^*$ with $\Rset^1$, and $\langle\ \rangle$ denotes the linear span 
$$\begin{array}{cccc}
\fg_\fa^{-2} = \langle X^2\rangle,&
\fg_\fa^{-1} = \langle X^7, X^8\rangle,&
\fg_\fa^{1} = \langle X^5, X^6\rangle,&
\fg_\fa^{2} = \langle X^1\rangle.\\
\end{array}
$$
Accordingly, $\Delta_\fa^- = \{-2, -1, -1\}$ and $\Delta_\fa^+ = \{2, 1, 1\}$ (see section \ref{iwasdec}).  The Iwasawa decomposition of the Lie algebra is then chosen to be $\fg = \fn^+ \oplus \fa \oplus \fk$, which lifts  via the exponential map to a decomposition for the group $G=NAK$, where $K$ is the set of fixed points of $\sigma$ in $G$ and $A,N$ consisting of diagonal, unipotent elements, respectively, are the Lie groups obtained by exponentiating the algebras  $\fa, \fn^+$, respectively.

We derive parameterizations for the subgroups  $N$ and $A$ by exponentiating the corresponding Lie algebras.  For the particular 4-dimensional representation of $\fg$ that we have chosen, we have
$$\fa = \span\{ X^3\}, \qquad \fn^+ = \span\{ X^1, X^5 , X^6\}.$$
Therefore, an element $a\in A$ can be parameterized, using one real parameter $\mu$ as
$$a(\mu) = e^{\mu X^3} = \diag( e^\mu ,1 , e^{-\mu} ),$$
and similarly, $n\in N$ can be parameterized by one real parameter $\delta$ and one complex parameter $\eta + i \theta$:
$$n(\delta,\eta,\theta) = e^{\delta X^1 + \eta X^5 + \theta X^6 } = \mbox{exp} \left(\begin{array}{ccc} 0 & \eta + i \theta & \delta \\ 0 & 0 & i\eta + \theta \\ 0 & 0& 0 \end{array}\right)= \left(\begin{array}{ccc} 1 & \eta + i \theta & \delta + \frac{i}{2}(\eta^2 + \theta^2) \\ 0 & 1 & i \eta + \theta \\ 0 & 0 & 1  \end{array}
\right).$$
We can thus find an explicit parametrization for the image of $NA$ under the Cartan embedding, it will be of the form $q = q(\mu, \delta, \eta ,\theta)= na^2n^*$.  This image is isomorphic to the symmetric space $G/K$, as claimed.  For our purposes, however, it is useful to express this product in terms of the Ernst potentials
$$\sqrt{2} \Phi = \eta + i \theta \qquad \cE = e^{2\mu} + i \delta = x +|\Phi|^2 + iy. $$
In particular, an element of the symmetric space  is calculated to be
$$
\tilde{P}:=g \sigma (g)^{-1}=
\left(\begin{array}{ccc} 
x + 2 |\Phi|^2 + \frac{1}{x}(y^2 + |\Phi|^4) & \sqrt{2}\Phi(1-\frac{i}{2}y+\frac{1}{x}|\Phi|^2) & \frac{1}{x}(y+i |\Phi|^2) \\
\sqrt{2}\bar{\Phi}(1+ \frac{i}{2}y+\frac{1}{x}|\Phi|^2) & 1+\frac{2}{x}|\Phi|^2 & \frac{i\sqrt{2}}{x}\bar{\Phi} \\
\frac{1}{x}(y-i |\Phi|^2) & \frac{-i\sqrt{2}}{x}\Phi & \frac{1}{x}
\end{array}
\right).
$$
Notice that when $\Phi =0$, one recovers an embedding of $\cG_{1,1}$ in $\cG_{2,1}$, as expected.  We remark that the above representation of $\cG_{2,1}$ is unitarily equivalent to the one appearing in \cite{Chandrasekhar}, p. 571, under the identification of our variables $x, y, \Phi, \cE$ with $\Psi, \Phi, H, Z$, respectively.

We would like to return this matrix to the original representation of $SU(2,1)$ in terms of $\Ga$ (as opposed to $\tilde{\Ga}$), using conjugation by $Q^*$.  We thus obtain
\beq\label{eq:Pmatrix}
P:= Q \tilde{P} Q^* = \frac{1}{2|\Phi|^2- (\cE +\bar{\cE})}\left[ \begin{array}{ccc} 2|\Phi|^2 - |\cE|^2 -1 &2\Phi(1-\bar{\cE}) & -i(|\cE|^2-\cE+\bar{\cE}-1)\\  2\bar{\Phi}(1-\cE) & -(2|\Phi|^2 +\cE+\bar{\cE}) & -2i\bar{\Phi}(\cE+1)\\ i(|\cE|^2+\cE-\bar{\cE}-1) & 2i\Phi(\bar{\cE}+1) & -(2|\Phi|^2+|\cE|^2+1)  \end{array}\right].
\eeq
The role of $P$ is to suggest which constants may be chosen when applying the inverse-scattering mechanism to generate solutions.

We will now solve \refeq{eq:theMksys1} for $N=1$ in this case, choosing the Minkowski seed $q_0(\bx)=I_{3 \times 3}$, and noting that $S_{kj}=\Ga= \diag(1,1,-1)$.  Setting $v_1=(\alpha, \beta, \gamma)^t$, we have $v_2=\Ga v_1=(\alpha, \beta, -\gamma)^t$, where $\alpha, \beta, \gamma$ are arbitrary complex parameters.  The resulting linear system $AU^* = B^*$ becomes
\beq\label{linsys2}
\left[ \begin{array}{cc} \frac{1}{\la_1 - \overline{\la_1}}(|\alpha|^2+|\beta|^2-|\delta|^2) &  \frac{1}{\la_1 - \overline{\la_2}}(|\alpha|^2+|\beta|^2+|\delta|^2) \\  \frac{1}{\la_2 - \overline{\la_1}}(|\alpha|^2+|\beta|^2+|\delta|^2) &  \frac{1}{\la_2 - \overline{\la_2}}(|\alpha|^2+|\beta|^2-|\delta|^2) \end{array} \right] \left[ \begin{array}{c} u_1^*  \\  u_2^*  \end{array} \right] = \left[ \begin{array}{ccc} -\overline{\alpha} & -\overline{\beta} & \overline{\gamma} \\ -\overline{\alpha} & -\overline{\beta} & -\overline{\gamma} \end{array} \right].
\eeq
Our goal is to determine the appropriate choices for the complex constants using the matrix $P$ derived above.  We transform to Boyer-Lindquist coordinates to simplify the calculations, as before; note that the roots of $\la_{1,2}$ of $p(\bx, \la)$ are exactly as in the previous example.  It is easily verified that the linear system \eqref{linsys2} has solution.
$$U = \frac{1}{2D} \left(\begin{array}{cc} 
\alpha ( \frac{iA}{s (\cos \theta +1)} - \frac{B}{r-m-is} ) & \alpha (  \frac{-iA}{s (\cos\theta -1)} + \frac{B}{r-m+is} ) \\
\beta ( \frac{iA}{s (\cos \theta +1)} - \frac{B}{r-m-is} ) & \beta (  \frac{-iA}{s (\cos\theta -1)} + \frac{B}{r-m+is} ) \\
\gamma ( \frac{-iA}{s (\cos\theta +1)} - \frac{B}{r-m-is} ) & \gamma (  \frac{-iA}{s (\cos\theta -1)} - \frac{B}{r-m+is})
\end{array}\right)$$
where we have denoted 
$$ A := |\alpha|^2 +|\beta|^2 - |\gamma|^2,\quad B :=  |\alpha|^2 +|\beta|^2+ |\gamma|^2,\quad D:= \frac{B^2}{4((r-m)^2+s^2)} - \frac{ A^2}{ 4 s^2\sin^2\theta}.$$
Isolating each of the vectors $u_1, u_2$, the harmonic map is given by 
\begin{eqnarray*}\label{fnlsol21} 
q(\bx) &=& I_{3\times 3} - \frac{1}{\la_1}u_1v_1^* - \frac{1}{\la_2}u_2v_2^* \\
	  &=&  \left(\begin{array}{ccc}
1 + \frac{8|\alpha|^2|\gamma|^2 s^2}{F} & \frac{1}{F}[8\alpha\bar{\beta}|\gamma|^2s^2] & \frac{1}{F}[-4s\al \bar{\gamma} (i A(r-m)-Bs\cos\theta)] \\
q_{(2,1)} & 1 + \frac{8|\beta|^2|\gamma|^2 s^2}{F} & \frac{1}{F}[-4s\beta\bar{\gamma} (i A(r-m) - Bs\cos\theta)] \\
q_{(3,1)} & q_{(3,2)}  & 1+\frac{8(|\alpha|^2+|\beta|^2)|\gamma|^2 s^2}{F}
 \end{array} \right)
\end{eqnarray*}
Here, $F:= A^2((r-m)^2+s^2)-B^2s^2\sin^2\theta$.

We would like to show that the above six-parameter family of harmonic maps into $\cG_{2,1}$ contains as a special case, the three-parameter family of Kerr-Newman metrics in Boyer-Lindquist coordinates $(r,\theta)$.  To that end, we make the following identifications
$$ A= s \quad B = -a := \sqrt{m^2 + s^2 - e^2}.$$
This gives rise to a system of equations for the constants $\alpha, \beta$ and $\gamma$.  Setting $\alpha\bar{\gamma} = n_1 + i n_2$ and $\beta\bar{\gamma}= n_3+in_4$, one matches the 1-soliton solution generated from the Minkowski seed with the Kerr-Newman solution by choosing
$$ n_1 =\frac{m}{2} \quad n_2 =0 \quad n_3 = - \frac{e}{2} \quad n_4 =0.$$
One then obtains exactly the Ernst potentials for Kerr-Newman (see Eq. (21.26), p.~326 in \cite{Stephani}),
$$\Phi = \frac{e}{r-ia \cos \theta} \qquad \cE = 1- \frac{2m}{r-ia \cos \theta},$$
for real parameters $e, a, m$. 
 \subsection{Higher-dimensional vacuum gravity and beyond}
The $\cG_{p,q}$ nonlinear sigma model described in the previous section has found applications in other settings (e.g., \cite{NSanch82, Breitenlohner}), and in particular in the study of higher-dimensional gravity.  Moreover, explicating black-hole solutions in $d$-dimensional vacuum gravity for $d>4$ has been of recent interest both in (minimal) supergravity and in string theory \cite{EmparanReall, Virmani}.  In this context, stationary solutions possessing $d-3$ rotational Killing fields have been extensively studied; imposing an additional timelike Killing field results in effectively two-dimensional theories, to which integrability techniques apply.  

For instance, the authors in \cite{EmparanReall} consider the Einstein vacuum equations for $d=5$, having one timelike and two spacelike Killing fields, admitting a metric of the form
$$
ds^2 = g_{ab}(\rho, z) dx^a dx^b + e^{2 \nu(\rho, z)}(d\rho^2 + dz^2),
$$
where $\det g  = - \rho^2$.  The Einstein equations then reduce to
\bea
\partial_\rho U  + \partial_z V &=& 0 \\
U:= \left( \rho \partial_\rho g g^{-1} \right), &\quad& V:= \left( \rho \partial_z g g^{-1} \right)  \\
\partial_\rho \nu = -\frac{1}{2\rho} + \frac{1}{8\rho}\tr (U^2 - V^2), &\quad& \partial_z \nu = \frac{1}{4\rho}\tr (UV).
\eea
The first two equations of the system comprise a principal chiral field model into $GL(3, \RR)$, coupled to equations in $\nu(\rho,z)$ which may be solved by quadrature once $g$ is determined.  The Zaharov-Belinski technique is employed to produce Kerr, Myers-Perry and black ring solutions.  On the other hand, one could look for a sigma-model representation of the field equations (that is to say, a harmonic map into the symmetric space $SL(3, \RR)/SO(2,1)$) and employ the solution-generating methods of this paper to address the case of a non-diagonal initial seed.  This and other applications will be pursued in a forthcoming paper.

Further progress on this topic may also result from extending the integrability results of this paper to situations where the target is not a symmetric space but  a {\em homogeneous space}, in supergravity descriptions of M-theory, for example \cite{CERL05, CERL06}.  These ideas will be pursued elsewhere.

\subsection{Summary \& Outlook}
By establishing the integrability of harmonic maps from effectively 2-dimensional domains into Riemannian symmetric spaces $G/K$ for real semisimple Lie groups $G$, and showing how the dressing technique can be used to generate new solutions from known ones, our paper goes beyond the current literature in providing a general framework for the study of harmonic maps into \emph{noncompact} symmetric spaces commonly appearing in mathematical physics.  Examples we consider suggest that this approach may also make generating solutions for other effectively two-dimension geometric field theories more tractable. Future directions for research on this topic include the possibility of dressing with poles on the real line, which would give rise to a different class of singularities for the dressed solution (e.g. black holes vs. naked singularities), and extending the integrability results of this paper to more general targets, e.g. homogeneous spaces.

\section{Acknowledgements}
Both authors are indebted to Professor S. Sahi for illuminating discussions on Lie algebras and symmetric spaces, and to Professor M. Kiessling for reading a first draft of this paper and coming up with suggestions on how to improve the presentation.  SB gratefully acknowledges support from Professor M. Kiessling and the NSF through grant DMS-0807705, and thanks Dr. A. Virmani and the Max Planck Institute-Albert Einstein Institute for their reception during Summer 2012.  STZ thanks the Institute for Advanced Study for their hospitality and the stimulating environment provided during Spring 2011 while the authors were working on this project.

\bibliographystyle{plain}
\bibliography{SB_STZ_Aug28}

\def\cprime{$'$} \def\cprime{$'$}
  \def\polhk#1{\setbox0=\hbox{#1}{\ooalign{\hidewidth
  \lower1.5ex\hbox{`}\hidewidth\crcr\unhbox0}}}
\begin{thebibliography}{10}

\bibitem{AKNS-3}
M.~J. Ablowitz, D.~J. Kaup, A.~C. Newell, and H.~Segur.
\newblock Method for solving the sine-{G}ordon equation.
\newblock {\em Phys. Rev. Lett.}, 30:1262--1264, 1973.

\bibitem{AKNS-2}
Mark~J. Ablowitz, David~J. Kaup, Alan~C. Newell, and Harvey Segur.
\newblock Nonlinear-evolution equations of physical significance.
\newblock {\em Phys. Rev. Lett.}, 31:125--127, 1973.

\bibitem{AKNS-1}
Mark~J. Ablowitz, David~J. Kaup, Alan~C. Newell, and Harvey Segur.
\newblock The inverse scattering transform-{F}ourier analysis for nonlinear
  problems.
\newblock {\em Studies in Appl. Math.}, 53(4):249--315, 1974.

\bibitem{Adams}
Jeffrey Adams.
\newblock Strong real forms and the {K}ac classification.
\newblock Unpublished communication. http://www.liegroups.org/papers/, 1995.

\bibitem{Ale80}
G.~A. Alekseev.
\newblock N-soliton solutions of {E}instein-{M}axwell equations.
\newblock {\em Pis'ma Zh. Eksp. Teor. Fiz.}, 32(4):301--303, 1980.

\bibitem{Ale81}
G.~A. Alekseev.
\newblock On soliton solutions of {E}instein's equations in a vacuum.
\newblock {\em Dokl. Akad. Nauk SSSR}, 256(4):827--830, 1981.

\bibitem{Barut}
Asim~O. Barut and Ryszard R{\polhk{a}}czka.
\newblock {\em Theory of group representations and applications}.
\newblock World Scientific Publishing Co., Singapore, second edition, 1986.

\bibitem{ZB-I}
V.~A. Belinski{\u\i} and V.~E. Zakharov.
\newblock Integration of the {E}instein equations by means of the inverse
  scattering problem technique and construction of exact soliton solutions.
\newblock {\em Sov. Phys. JETP}, 48(6):985--994, 1979.

\bibitem{ZB-II}
V.~A. Belinski{\u\i} and V.~E. Zakharov.
\newblock Stationary gravitational solitons with axial symmetry.
\newblock {\em Sov. Phys. JETP}, 77(1):3--19, 1979.

\bibitem{Breitenlohner}
Peter Breitenlohner, Dieter Maison, and Gary Gibbons.
\newblock {$4$}-dimensional black holes from {K}aluza-{K}lein theories.
\newblock {\em Comm. Math. Phys.}, 120(2):295--333, 1988.

\bibitem{Car27}
Eli Cartan.
\newblock {La th\'eorie de groupes finis et continus et l'Analysis situs}.
\newblock {\em M\'em. Sci. Math. Fasc. XLII}, 1930.

\bibitem{Car73}
Brandon Carter.
\newblock Republication of: Black hole equilibrium states.
\newblock {\em General Relativity and Gravitation}, 41:2873--2938, 2009.
\newblock 10.1007/s10714-009-0888-5.

\bibitem{Chandrasekhar}
Subrahmanyan Chandrasekhar.
\newblock {\em The Mathematical Theory of Black Holes}.
\newblock Oxford University Press, New York, 1992.

\bibitem{Christodoulou}
Demetrios Christodoulou.
\newblock {\em The action principle and partial differential equations}, volume
  146 of {\em Annals of Mathematics Studies}.
\newblock Princeton University Press, Princeton, NJ, 2000.

\bibitem{CERL05}
Edmund~J. Copeland, James Ellison, Jonathan Roberts, and Andr{\'e} Lukas.
\newblock Isometries of low-energy heterotic {M} theory.
\newblock {\em Phys. Rev. D (3)}, 72(8):086008, 9, 2005.

\bibitem{CERL06}
Edmund~J. Copeland, James Ellison, Jonathan Roberts, and Andre Lukas.
\newblock Cosmological solutions of low-energy heterotic {M} theory.
\newblock {\em Phys. Rev. D (3)}, 73(8):086009, 19, 2006.

\bibitem{Coq90}
R.~Coquereaux.
\newblock Lie balls and relativistic quantum fields.
\newblock {\em Nuclear Phys. B Proc. Suppl.}, 18B:48--52 (1991), 1990.
\newblock Recent advances in field theory (Annecy-le-Vieux, 1990).

\bibitem{EicFor80}
H.~Eichenherr and M.~Forger.
\newblock More about nonlinear sigma models on symmetric spaces.
\newblock {\em Nuclear Phys. B}, 164(3):528--535, 1980.

\bibitem{EmparanReall}
Roberto Emparan and Harvey Reall.
\newblock Black holes in higher dimensions.
\newblock {\em Living Rev. Rel.}, 11(6):1--76, 2008.
\newblock arxiv.org/abs/0801.3471v2.

\bibitem{EGK84}
Ahmet Eri{\c{s}}, Metin G{\"u}rses, and Atalay Karasu.
\newblock Symmetric space property and an inverse scattering formulation of the
  {SAS} {E}instein-{M}axwell field equations.
\newblock {\em J. Math. Phys.}, 25(5):1489--1495, 1984.

\bibitem{Ern68a}
Frederick~J. Ernst.
\newblock New formulation of the axially symmetric gravitational field problem.
\newblock {\em Phys. Rev.}, 167:1175--1178, Mar 1968.

\bibitem{Ern68b}
Frederick~J. Ernst.
\newblock New formulation of the axially symmetric gravitational field problem.
  ii.
\newblock {\em Phys. Rev.}, 168:1415--1417, Apr 1968.

\bibitem{Virmani}
Pau Figueras, Ella Jamsin, Jorge~V. Rocha, and Amitabh Virmani.
\newblock Integrability of five-dimensional minimal supergravity and charged
  rotating black holes.
\newblock {\em Classical Quantum Gravity}, 27(13):135011, 37, 2010.

\bibitem{GGKM}
Clifford~S. Gardner, John~M. Greene, Martin~D. Kruskal, and Robert~M. Miura.
\newblock Korteweg-de{V}ries equation and generalization. {VI}. {M}ethods for
  exact solution.
\newblock {\em Comm. Pure Appl. Math.}, 27:97--133, 1974.

\bibitem{Guest08}
Martin~A. Guest.
\newblock {\em From quantum cohomology to integrable systems.}
\newblock Oxford: Oxford University Press, Oxford, UK, 2008.

\bibitem{GurXan82}
Metin G{\"u}rses and Basilis~C. Xanthopoulos.
\newblock Axially symmetric, static self-dual {${\rm SU}(3)$} gauge fields and
  stationary {E}instein-{M}axwell metrics.
\newblock {\em Phys. Rev. D (3)}, 26(8):1912--1915, 1982.

\bibitem{Helgason-1}
Sigurdur Helgason.
\newblock {\em Differential geometry, {L}ie groups, and symmetric spaces},
  volume~34 of {\em Graduate Studies in Mathematics}.
\newblock American Mathematical Society, Providence, RI, 2001.
\newblock Corrected reprint of the 1978 original.

\bibitem{Hump72}
James~E. Humphreys.
\newblock {\em Introduction to {L}ie algebras and representation theory}.
\newblock Springer-Verlag, New York, 1972.
\newblock Graduate Texts in Mathematics, Vol. 9.

\bibitem{Ker63}
Roy~P. Kerr.
\newblock Gravitational field of a spinning mass as an example of algebraically
  special metrics.
\newblock {\em Phys. Rev. Lett.}, 11:237--238, 1963.

\bibitem{Kna86}
Anthony~W. Knapp.
\newblock {\em Representation theory of semisimple groups}, volume~36 of {\em
  Princeton Mathematical Series}.
\newblock Princeton University Press, Princeton, NJ, 1986.
\newblock An overview based on examples.

\bibitem{Lax68}
Peter~D. Lax.
\newblock Integrals of nonlinear equations of evolution and solitary waves.
\newblock {\em Comm. Pure Appl. Math.}, 21:467--490, 1968.

\bibitem{MaMi67}
Richard~A. Matzner and Charles~W. Misner.
\newblock Gravitational field equations for sources with axial symmetry and
  angular momentum.
\newblock {\em Phys. Rev.}, 154:1229--1232, Feb 1967.

\bibitem{Maz84}
P.~O. Mazur.
\newblock A relationship between the electrovacuum {E}rnst equations and
  nonlinear {$\sigma $}-model.
\newblock {\em Acta Phys. Polon. B}, 14(4):219--234, 1983.

\bibitem{Misner}
Charles~W. Misner.
\newblock {Harmonic Maps as Models for Physical Theories}.
\newblock {\em Phys.Rev.}, D18:4510--4524, 1978.

\bibitem{NeuKra83}
G.~Neugebauer and D.~Kramer.
\newblock Einstein-{M}axwell solitons.
\newblock {\em J. Phys. A}, 16(9):1927--1936, 1983.

\bibitem{NCCEPT65}
E.~T. Newman, E.~Couch, K.~Chinnapared, A.~Exton, A.~Prakash, and R.~Torrence.
\newblock Metric of a rotating, charged mass.
\newblock {\em Jour. Math. Phys.}, 6(6):918--919, 1965.

\bibitem{Noe18}
Emmy Noether.
\newblock Invariant variation problems.
\newblock {\em Transport Theory Statist. Phys.}, 1(3):186--207, 1971.
\newblock Translated from the German (Nachr. Akad. Wiss. G{\"o}ttingen
  Math.-Phys. Kl. II 1918, 235--257).

\bibitem{Poh76}
K.~Pohlmeyer.
\newblock Integrable {H}amiltonian systems and interactions through quadratic
  constraints.
\newblock {\em Comm. Math. Phys.}, 46(3):207--221, 1976.

\bibitem{Sahi}
Siddhartha Sahi.
\newblock Rutgers, The State University of New Jersey. Private communication,
  2012.

\bibitem{NSanch82}
Norma Sanchez.
\newblock Connection between the nonlinear sigma model and the einstein
  equations of general relativity.
\newblock {\em Phys. Rev. D}, 26:2589--2597, Nov 1982.

\bibitem{ShatStrauss96}
Jalal Shatah and Walter Strauss.
\newblock Breathers as homoclinic geometric wave maps.
\newblock {\em Phys. D}, 99(2-3):113--133, 1996.

\bibitem{Stephani}
Hans Stephani, Dietrich Kramer, Malcolm MacCallum, Cornelius Hoenselaers, and
  Eduard Herlt.
\newblock {\em Exact solutions of {E}instein's field equations}.
\newblock Cambridge Monographs on Mathematical Physics. Cambridge University
  Press, Cambridge, second edition, 2003.

\bibitem{Tau49}
Olga Taussky.
\newblock A recurring theorem on determinants.
\newblock {\em The American Mathematical Monthly}, 56(10):672--676, 1949.

\bibitem{Terng10}
Chuu-Lian Terng.
\newblock Soliton hierarchies constructed from involutions.
\newblock In {\em Fourth {I}nternational {C}ongress of {C}hinese
  {M}athematicians}, volume~48 of {\em AMS/IP Stud. Adv. Math.}, pages
  367--381. Amer. Math. Soc., Providence, RI, 2010.

\bibitem{TerUhl04}
Chuu-Lian Terng and Karen Uhlenbeck.
\newblock {$1+1$} wave maps into symmetric spaces.
\newblock {\em Comm. Anal. Geom.}, 12(1-2):345--388, 2004.

\bibitem{Uhl89}
Karen Uhlenbeck.
\newblock Harmonic maps into {L}ie groups: classical solutions of the chiral
  model.
\newblock {\em J. Differential Geom.}, 30(1):1--50, 1989.

\bibitem{WehrBarut94}
R.~F. Wehrhahn and A.~O. Barut.
\newblock Symmetry scattering for {${\rm SU}(2,2)$} with applications.
\newblock {\em J. Math. Phys.}, 35(6):2838--2855, 1994.

\bibitem{Wei92}
Gilbert Weinstein.
\newblock The stationary axisymmetric two-body problem in general relativity.
\newblock {\em Comm. Pure Appl. Math.}, 45(9):1183--1203, 1992.

\bibitem{Wei96}
Gilbert Weinstein.
\newblock {$N$}-black hole stationary and axially symmetric solutions of the
  {E}instein/{M}axwell equations.
\newblock {\em Comm. Partial Differential Equations}, 21(9-10):1389--1430,
  1996.

\bibitem{Woo94}
J.~C. Wood.
\newblock Harmonic maps into symmetric spaces and integrable systems.
\newblock In {\em Harmonic maps and integrable systems}, Aspects Math., E23,
  pages 29--55. Vieweg, Braunschweig, 1994.

\bibitem{Xan84}
Basilis~C. Xanthopoulos.
\newblock A geometric notion of complete integrability.
\newblock {\em Phys. Lett. A}, 105(7):334--338, 1984.

\bibitem{ZakSha71}
V.~E. Zaharov and A.~B. {\v{S}}abat.
\newblock Exact theory of two-dimensional self-focusing and one-dimensional
  self-modulation of waves in nonlinear media.
\newblock {\em \v Z. \`Eksper. Teoret. Fiz.}, 61(1):118--134, 1971.

\bibitem{ZS-I}
V.~E. Zaharov and A.~B. {\v{S}}abat.
\newblock A scheme for integrating the nonlinear equations of mathematical
  physics by the method of the inverse scattering problem. {I}.
\newblock {\em Funktsional. Anal. i Prilozhen.}, 8(3):43--53, 1974.

\bibitem{ZS-II}
V.~E. Zaharov and A.~B. {\v{S}}abat.
\newblock Integration of the nonlinear equations of mathematical physics by the
  method of the inverse scattering problem. {II}.
\newblock {\em Funktsional. Anal. i Prilozhen.}, 13(3):13--22, 1979.

\bibitem{SG}
V.~E. Zaharov, L.~A. Tahtad{\v{z}}jan, and L.~D. Faddeev.
\newblock A complete description of the solutions of the ``sine-{G}ordon''\
  equation.
\newblock {\em Dokl. Akad. Nauk SSSR}, 219:1334--1337, 1974.

\bibitem{ZM}
V.~E. Zakharov and A.~V. Mikha{\u\i}lov.
\newblock Relativistically invariant two-dimensional models of field theory
  which are integrable by means of the inverse scattering problem method.
\newblock {\em Soviet Physics JETP}, 47:1017--1027, 1978.

\end{thebibliography}

\end{document}